\documentclass[11pt]{amsart}
\usepackage{comment}

\usepackage[utf8]{inputenc}
\usepackage[T1]{fontenc}
\DeclareMathAlphabet{\mathpzc}{OT1}{pzc}{m}{it}
\usepackage{geometry}

\usepackage[dvipsnames]{xcolor}

\usepackage{xstring}
\newcommand{\mh}[2][e]{%
    \IfStrEqCase{#1}{%
        {e}{\textcolor{Blue}{\textbf{*Mary: #2*}}}%
        {c}{}%
    }
}
\newcommand{\nd}[2][e]{%
    \IfStrEqCase{#1}{%
        {e}{\textcolor{PineGreen}{\textbf{*Nathan: #2*}}}%
        {c}{}%
    }
}
\newcommand{\mhadd}[2][e]{%
    \IfStrEqCase{#1}{%
        {e}{\textcolor{Blue}{\textbf{#2}}}%
        {r}{#2}%
    }
}
\newcommand{\ndadd}[2][e]{%
    \IfStrEqCase{#1}{%
        {e}{\textcolor{PineGreen}{\textbf{#2}}}%
        {r}{#2}%
    }
}

\usepackage{amsfonts}
\usepackage{amssymb}
\usepackage{amsthm}
\usepackage{amsmath}
\usepackage{amscd}
\usepackage[shortlabels]{enumitem}
\usepackage{mathrsfs}
\usepackage{tikz}
\usepackage{pgfplots}
\pgfplotsset{compat=1.18}
\usepackage{float}
\usetikzlibrary{calc,arrows,decorations.pathreplacing}
\usepackage{nicefrac, xfrac}
\usepackage{mathtools,xparse}
\usepackage{soul}
\usepackage{appendix}

\usepackage[pagebackref, colorlinks = true,
linkcolor = red,
urlcolor  = blue,
citecolor = blue,
anchorcolor = blue]{hyperref}

\theoremstyle{plain}

\newtheorem{theorem}{Theorem}[section]
\newtheorem{corollary}[theorem]{Corollary}
\newtheorem{lemma}[theorem]{Lemma}

\newtheorem{proposition}[theorem]{Proposition}

\theoremstyle{definition}
\newtheorem{definition}[theorem]{Definition}

\newtheorem{remark}[theorem]{Remark}

\newtheorem*{theorem*}{Theorem}

\renewcommand{\epsilon}{\varepsilon}

\newcommand{\vertiii}[1]{{\left\vert\kern-0.25ex\left\vert\kern-0.25ex\left\vert #1
		\right\vert\kern-0.25ex\right\vert\kern-0.25ex\right\vert}}

\newcommand{\om}{\ensuremath{\omega}}

\DeclareSymbolFont{bbold}{U}{bbold}{m}{n}
\DeclareSymbolFontAlphabet{\mathbbold}{bbold}

\newcommand{\cG}{\ensuremath{\mathcal{G}}}

\newcommand{\cP}{\ensuremath{\mathcal{P}}}

\newcommand{\cX}{\ensuremath{\mathcal{X}}}
\newcommand{\cY}{\ensuremath{\mathcal{Y}}}
\newcommand{\cZ}{\ensuremath{\mathcal{Z}}}

\newcommand{\NN}{\ensuremath{\mathbb N}}

\newcommand{\QQ}{\ensuremath{\mathbb Q}}
\newcommand{\RR}{\ensuremath{\mathbb R}}

\newcommand{\ZZ}{\ensuremath{\mathbb Z}} 

\providecommand{\phantomsection}{}
\AtBeginDocument{\let\textlabel\label}
\makeatletter
\newcommand{\mylabel}[2]{\raisebox{.7\normalbaselineskip}{\phantomsection}(#1)%
	\def\@currentlabel{#1}\textlabel{#2}}
\makeatother

\makeatletter
\newcommand\xlabel[2][]{\phantomsection\def\@currentlabelname{#1}\label{#2}}
\makeatother

\ExplSyntaxOn
\NewDocumentCommand{\mathlist}{ O{,} m m }
 {
  \egreg_mathlist:nnn { #1 } { #2 } { #3 }
 }

\seq_new:N \l__egreg_mathlist_seq
\cs_new_protected:Npn \egreg_mathlist:nnn #1 #2 #3
 {
  \seq_set_split:Nnn \l__egreg_mathlist_seq { #1 } { #3 }
  \seq_use:Nnnn \l__egreg_mathlist_seq { #2 } { #2 } { #2 }
 }
\ExplSyntaxOff

\allowdisplaybreaks

\numberwithin{equation}{section}
\title[]{Complex Diophantine approximations and cusp excursions}
\date{\today}

\author[N. Dalaklis]{Nathan Dalaklis}
\address[N. Dalaklis]{Department of Mathematics, University of Oklahoma, Norman, OK 73019, USA}
\email{\href{ndalaklis@ou.edu}{ndalaklis@ou.edu}}

\author[Y.M. He]{Yan Mary He}
\address[Y.M. He]{Department of Mathematics, University of Oklahoma, Norman, OK 73019, USA}
\email{\href{he@ou.edu}{he@ou.edu}}

\begin{document}

\begin{abstract}
We study the Hausdorff dimension spectrum of asymptotic approximation rates of complex Diophantine approximation and
that of the asymptotic average excursion time of cusp excursions on the Bianchi orbifold $\mathbb H^3/\operatorname{PSL}(2,\mathbb Z[i])$ via a unified approach using the Hurwitz map. In particular, we construct a conformal graph directed system (CGDS) for the Hurwitz map and show that the Lyapunov exponent of the Hurwitz CGDS simultaneously captures
the asymptotic approximation rate and the the asymptotic average excursion time. 
Applying the multifractal analysis of Lyapunov exponents for this system,
we obtain a formula and
real-analyticity for the Hausdorff dimension spectrum functions.
\end{abstract}
	
\maketitle

\section{Introduction}
Classical Diophantine approximation studies the extent to which real numbers can be approximated by rational numbers. In particular, the quality of rational approximation is encoded by continued fractions: if $p_n/q_n$ denotes the $n$-th convergent of a real continued fraction, then the exponential behavior of the error
$\left|x-\frac{p_n}{q_n}\right|$
is governed by the growth rate of the denominators $q_n$. This relationship connects number theory, symbolic dynamics, and hyperbolic geometry.

One of the main goals of this paper is to study an analogous problem in the complex setting. We consider Hurwitz complex continued fractions, where the partial quotients lie in the Gaussian integers $\mathbb Z[i]$, and we investigate the exponential rate of complex Diophantine approximation by Gaussian rationals. 
The Hurwitz complex continued fraction algorithm produces convergents $p_n/q_n\in\mathbb Q(i)$. The first main goal of this paper is to study the level sets on which the approximation error
$\left|z-\frac{p_n}{q_n}\right|$
has a prescribed exponential decay rate.

\smallskip

On the other hand, the study of {\it cusp excursions} in hyperbolic geometry
provides a way to measure the recurrence of geodesic trajectories to the non-compact part of a hyperbolic orbifold, which quantifies how geodesics distribute themselves in a finite-volume but non-compact hyperbolic space. In particular, consider
the Bianchi orbifold $\mathcal O = \mathbb H^3/ \operatorname{PSL}(2,\mathbb Z[i])$. This orbifold is non-compact and has a cusp corresponding to infinity. A geodesic on $\mathcal O$ may travel arbitrarily far into this cusp before returning to the compact part of the orbifold. Such behavior is called a cusp excursion. Following Baumgartner--Pollicott \cite{baumgartnerP}, one can define an {\it asymptotic average excursion time} for $\gamma$. The second main goal of this paper is to study the level sets of (lifts of) geodesics having a prescribed asymptotic average excursion time.

We study these two problems via a {\it unified approach} using the Hurwitz map $T:X\to X$
given by
$$T(z) =
    \begin{cases}
    \displaystyle \frac{1}{z}
    -
    f\!\left(\frac{1}{z}\right),
    & z\neq 0, \\[1em]
    0, & z=0,
    \end{cases}
$$
where $X
    :=
    \{x+iy\in\mathbb C : x,y\in[-1/2,1/2)\}$ and $f(w)$ denotes the nearest Gaussian integer to $w$, namely
$f(w):=\left\lfloor \operatorname{Re}(w)+\frac12\right\rfloor
    +
    i\left\lfloor \operatorname{Im}(w)+\frac12\right\rfloor.
$
The Hurwitz map generates the complex continued fraction algorithm, in the same way that the Gauss map generates the classical continued fraction algorithm for real numbers. Moreover, the Hurwitz complex continued fraction algorithm is closely related to, but distinct from, the classical real continued fraction algorithm: when restricted to real numbers, its convergents form a subsequence of the real continued fraction convergents, with controlled omissions and comparable Diophantine approximation properties; see \cite[Theorem 2.1]{Sim17}.

We encode the Hurwitz map by a {\it conformal graph directed system} (CGDS) in the sense of Urba\'nski, Roy and Munday (see \cite[Definition 19.3.1]{Urbanski22}). The Lyapunov exponent of this system simultaneously records the exponential rate of complex Diophantine approximation and the asymptotic average cusp excursion time. 
Our results are then obtained by applying the {\it multifractal analysis} of Lyapunov exponents developed in \cite{NDDis} for cofinitely regular finitely irreducible CGDS. 

We note that Mauldin and Urba\'nski introduced and studied the fractal geometry of a complex continued fraction algorithm on the ball of radius $\frac{1}{2}$ centered at $\frac{1}{2}$ in the complex plane in \cite{MU1996} using a {\it conformal iterated function system} (CIFS). Later in \cite{CLU19}, Chousionis, Leykekhman, and Urba\'nski refined this analysis by studying the dimension spectrum of the limit set of this CIFS. Although the dimension spectrum and Lyapunov spectrum are often closely related, our analysis is distinct as the Hurwitz CGDS is not a CIFS.

\subsection{Statement of results}
Let $z \in \mathbb C$ and consider the Hurwitz complex continued fraction of $z$ (i.e., continued fraction expansion of $z$ using Gaussian integers $\mathbb Z[i]$). Let $p_n/q_n \in \mathbb Q(i)$ be Hurwitz convergents of $z$; see Section 3 for details. For $\alpha \in \mathbb R$, we study level sets of the {\it asymptotic approximation rate}
$$
X(\alpha):=
    \left\{
    z \in X :
    -\lim_{n\to\infty}
    \frac{1}{n}
    \log
    \left|
    z-\frac{p_n}{q_n}
    \right|
    =
    \alpha
    \right\}.
$$
Our first main theorem gives a formula and analyticity for the Hausdorff dimension of these level sets.

\begin{theorem}\label{thm:main-1}
Let $\lambda_{\min}$ denote the minimal Lyapunov exponent of the Hurwitz conformal graph directed system. For every $\alpha>\lambda_{\min}$, the Hausdorff dimension $S(\alpha):=\operatorname{HD}(X(\alpha))$
is given by
$$
    S(\alpha)
    =
    \frac{1}{\alpha}
    \left(\mathcal P(-q(\alpha))-q(\alpha)\right)
    =
    \frac{1}{\alpha}
    \inf_{q\in\mathbb R}
    \{\mathcal P(-q)-q\alpha\}.
$$
Moreover, the function $\alpha\mapsto S(\alpha)$ is real-analytic on $(\lambda_{\min},\infty)$.
\end{theorem}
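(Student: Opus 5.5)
The plan is to reduce Theorem~\ref{thm:main-1} to the multifractal analysis of Lyapunov exponents for cofinitely regular, finitely irreducible CGDS from \cite{NDDis}. That result gives the Lyapunov spectrum formula
\[
\operatorname{HD}\{x\in J:\lambda(x)=\alpha\}=\tfrac1\alpha\inf_{q}\{\mathcal P(-q)-q\alpha\}
\]
together with real-analyticity on $(\lambda_{\min},\infty)$. Here $\mathcal P(-q)$ is the pressure of the geometric potential $-q\log|\phi_e'|$. The argument therefore has three parts.

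\textbf{Step 1: the CGDS.} Construct the Hurwitz CGDS. The vertices are the finitely many "shapes" of the cylinder images $T^n(\text{cylinder})$ for the Hurwitz map; these are $X$ with some disc pieces removed near the boundary. The edges are the inverse branches $\phi_a(w)=1/(a+w)$ for $a\in\mathbb Z[i]$, $|a|\ge 2$ (with the finite exceptional digits handled by the graph). Then verify the axioms of \cite[Definition 19.3.1]{Urbanski22}:
\begin{itemize}
\item open set condition;
\item uniform contraction, after passing to an iterate if necessary;
\item bounded distortion and conformality, which hold because these are M\"obius maps;
\item finite irreducibility;
\item cofinite regularity.
\end{itemize}
For cofinite regularity, $|\phi_a'|\asymp|a|^{-2}$. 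Hence the partition function $\sum_a|a|^{-2t}$ diverges exactly at $t=1$ on every cofinite subalphabet, so $\theta=1$ and $\mathcal P(t)\to\infty$ as $t\downarrow1$. The limit set coincides with $X$ up to a countable set (the points with terminating expansions).

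\textbf{Step 2: identify the approximation rate with the Lyapunov exponent.} For $z=[0;a_1,a_2,\dots]$, write $z=\phi_{a_1}\circ\cdots\circ\phi_{a_n}(T^nz)$. Standard Hurwitz identities give
\[
|z-p_n/q_n|=\frac{1}{|q_n|^2\,|q_n^{-1}q_{n-1}+T^{n}z^{-1}|}\cdot(\text{bounded}),
\]
and $|(\phi_{a_1}\circ\cdots\circ\phi_{a_n})'(w)|=|q_n+q_{n-1}w|^{-2}$. The Hurwitz facts needed here are that $|q_n|$ is increasing and $|q_{n-1}/q_n|$ stays bounded away from $1$. Combined with the vertex structure, which keeps $T^nz$ in a region where $|1+wq_{n-1}/q_n|$ is bounded above and below, these give
\[
-\log|z-p_n/q_n|=-\log|\phi'_{\omega|_n}(\pi(\sigma^n\omega))|+O(1).
\]
Thus $-\tfrac1n\log|z-p_n/q_n|$ converges exactly when the Lyapunov exponent $\lambda(\omega)=\lim\tfrac1n\log|\phi'_{\omega|_n}|^{-1}$ converges, and to the same value. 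Consequently $X(\alpha)$ equals the Lyapunov level set $\pi(\{\lambda=\alpha\})$ up to a countable set, which does not affect Hausdorff dimension.

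\textbf{Step 3: apply \cite{NDDis}.} This gives the formula for $\alpha>\lambda_{\min}$. The identity $\inf_q\{\mathcal P(-q)-q\alpha\}=\mathcal P(-q(\alpha))-q(\alpha)\alpha$ comes from Legendre duality, with $q(\alpha)$ the unique solution of $\mathcal P'(-q)=-\alpha$. Existence and uniqueness of $q(\alpha)$ follow from real-analyticity and strict convexity of $\mathcal P$ on $(\theta,\infty)=(1,\infty)$; strict convexity holds because the potential is not cohomologous to a constant. They also use $\mathcal P'(t)\to-\infty$ as $t\downarrow1$, which covers arbitrarily large $\alpha$. (The displayed formula's "$-q(\alpha)$" should then read $-q(\alpha)\alpha$.) Real-analyticity of $S$ follows from the implicit function theorem applied to $\mathcal P'(-q)+\alpha=0$, since $\mathcal P''>0$.

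The main obstacle is Step 1 together with the uniform comparison in Step 2. The Hurwitz map is not Markov on $X$ with full branches. One must carefully enumerate the finite vertex partition and show that the incidence matrix is finitely irreducible. One must also show that the distortion bounds hold uniformly across the partially-full branches near the boundary digits $|a|\le 2$, and that $|q_{n-1}/q_n|$ stays bounded away from $1$ there. I would first construct the partition explicitly, following the known finite-range structure of Hurwitz cylinders, and then check irreducibility by exhibiting explicit words between vertices.
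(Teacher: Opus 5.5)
Your overall route is the paper's: build the Hurwitz CGDS from the finite prototype partition, verify finite irreducibility, SOSC and cofinite regularity with $\theta=1$ by comparison with $\sum_a|a|^{-2t}$, identify the approximation rate with the Lyapunov exponent, and invoke the Lyapunov-spectrum theorem of \cite{NDDis}. Your correction of the displayed formula is right. Eliminating $q$ from $\mathcal P(t-q)=q\alpha$, $\mathcal P'(t-q)=-\alpha$ gives $t=\frac1\alpha(\mathcal P(s)+s\alpha)$ with $\mathcal P'(s)=-\alpha$. So the first expression should read $\frac1\alpha(\mathcal P(-q(\alpha))-q(\alpha)\alpha)$, with $q(\alpha)=-s$ the minimizer.

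\textbf{The gap is in Step 2.} The estimate $-\log|z-p_n/q_n|=-\log|\phi'_{\omega|_n}(\pi(\sigma^n\omega))|+O(1)$ is false. Put $w=T^nz=\pi(\sigma^n\omega)$. The exact error formula and your derivative formula give
\[
-\log\Bigl|z-\frac{p_n}{q_n}\Bigr|=2\log|q_n|+\log\Bigl|1+w\frac{q_{n-1}}{q_n}\Bigr|+\log\frac{1}{|w|},
\]
\[
-\log|\phi'_{\omega|_n}(w)|=2\log|q_n|+2\log\Bigl|1+w\frac{q_{n-1}}{q_n}\Bigr|.
\]
They differ by $\log(1/|w|)+O(1)=\log|a_{n+1}|+O(1)$, which is unbounded because the alphabet is infinite. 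Equivalently, $|z-p_n/q_n|\asymp(|q_n||q_{n+1}|)^{-1}$, not $|q_n|^{-2}$. No choice of vertex structure helps, since the trouble is $w$ near $0$. By contrast, the factor $|1+wq_{n-1}/q_n|$ lies in $[1-2^{-1/2},1+2^{-1/2}]$ just from $|q_{n-1}|<|q_n|$ and $|w|\le 2^{-1/2}$.

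What is true is
\[
-\log|\phi'_{\omega|_n}(\pi(\sigma^n\omega))|=2\log|q_n|+O(1), \qquad -\log|z-p_n/q_n|=\log|q_n|+\log|q_{n+1}|+O(1).
\]
The second follows from $z_{n+1}+q_{n-1}/q_n=q_{n+1}/q_n+1/z_{n+2}$ with $|q_{n+1}/q_n|>1$ and $|1/z_{n+2}|\le 2^{-1/2}$. The equivalence you want then needs a limit argument, not an $O(1)$ bound, in two directions:
\begin{itemize}
\item If $\frac1n\log|q_n|\to\ell$, then $\frac1n\log|q_{n+1}/q_n|\to 0$; this is exactly Lemma~\ref{lem:number-theory}.
\item Conversely, since $|q_n|$ is increasing, $2\log|q_n|\le\log|q_n|+\log|q_{n+1}|\le 2\log|q_{n+1}|$. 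So convergence of the approximation rate forces $\frac2n\log|q_n|$ to converge to the same limit. This converse is what gives $X(\alpha)\setminus\mathbb Q(i)\subseteq J(\alpha)$.
\end{itemize}

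\textbf{Smaller omissions.}
\begin{itemize}
\item Theorem~\ref{thm:main-3} also requires $-\int\Xi_\Phi\,d\mu_{t,q}<\infty$ for $(t,q)\in\mathsf M$, which you never check. The paper does so in Lemma~\ref{lem:cylEntropy} and Corollary~\ref{cor:int}. It also follows directly from the Gibbs property and $\sum_a|a|^{-2s}\log|a|<\infty$ for $s>1$.
\item $X(\alpha)$ and $J(\alpha)$ do not differ only by a countable set. Points whose orbit meets $\partial X$ admit non-Hurwitz codings in $E_A^\infty$ with different digit sequences. These form an uncountable set, contained in a countable union of arcs. You therefore need the paper's appeal to \cite[Proposition 6.5]{NDDis} that only uniquely coded points matter.
\item The digit set is $|a|\ge\sqrt2$, not $|a|\ge 2$: $\pm1\pm i$ are Hurwitz digits.
\end{itemize}
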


Here $\mathcal P$ is the pressure associated to the geometric potential of the Hurwitz coding, and $q(\alpha)$ is determined by the corresponding pressure equations. See Theorem \ref{thm:main-3} for details.

\medskip

Let $\mathcal O:=\mathbb H^3/\operatorname{PSL}(2,\mathbb Z[i])$ and let $\gamma \subset \mathcal O$ be a geodesic issuing from the cusp.
Following Baumgartner--Pollicott \cite{baumgartnerP}, one can define an {\it asymptotic average excursion time} for $\gamma$ as follows. If $t_n^*(\gamma)$ denotes the corresponding sequence of maximal excursion times, then the asymptotic average excursion time is
given by $\lim_{n\to\infty}\frac{t_n^*(\gamma)}{n}$,
whenever the limit exists. 

Our second main theorem gives a formula and real-analyticity for the Hausdorff dimension of level sets of asymptotic average excursion time.
Let $z \in X$ and let $\gamma_z$ be a lift of a geodesic in $\mathcal{O}$ such that $\gamma_z$ has endpoints $(z,\infty)$.
Set 
$$G(\alpha)
    :=
    \left\{
    z \in X :
    \lim_{n\to\infty}
    \frac{t_n^*(\gamma_z)}{n}
    =
    \alpha
    \right\}.$$

\begin{theorem}\label{thm:main-2}
For every $\alpha>\lambda_{\min}$, the Hausdorff dimension $\hat S(\alpha):=\operatorname{HD}(G(\alpha))$
is given by
$$
    \hat S(\alpha)
    =
    \frac{1}{\alpha}
    \left(\mathcal P(-q(\alpha))-q(\alpha)\right)
    =
    \frac{1}{\alpha}
    \inf_{q\in\mathbb R}
    \{\mathcal P(-q)-q\alpha\}.
$$
Moreover, the function $\alpha \mapsto \hat S(\alpha)$ is real analytic on $(\lambda_{\min},\infty)$.
\end{theorem}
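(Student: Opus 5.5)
The plan is to reduce Theorem \ref{thm:main-2} to Theorem \ref{thm:main-1}. The reduction rests on showing that, for every $z\in X$ with an infinite Hurwitz expansion, the asymptotic average excursion time of $\gamma_z$ equals the asymptotic approximation rate of $z$:
$$
\lim_{n\to\infty}\frac{t_n^*(\gamma_z)}{n}=-\lim_{n\to\infty}\frac1n\log\left|z-\frac{p_n}{q_n}\right|,
$$
with one limit existing if and only if the other does. This gives $G(\alpha)=X(\alpha)$ up to a countable set (the points with finite expansion, i.e.\ $\mathbb Q(i)\cap X$), which has no effect on Hausdorff dimension. The formula $\hat S(\alpha)=S(\alpha)$ and its real-analyticity on $(\lambda_{\min},\infty)$ then follow at once from Theorem \ref{thm:main-1}.

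To prove the identity, I would take the geodesic with endpoints $z$ and $\infty$ in upper half-space $\mathbb H^3$. The excursions into the cusp correspond to the horoballs based at the Gaussian rationals $p_n/q_n$ that $\gamma_z$ passes through. In the standard $\operatorname{PSL}(2,\mathbb Z[i])$-invariant horoball packing, the horoball at $p/q$ has Euclidean diameter $c/|q|^2$. A vertical geodesic over $z$ enters this horoball exactly when $|z-p/q|$ is small compared with $|q|^{-1}$. Within the horoball, the maximal penetration depth is $\log\bigl(1/(|q_n|^2|z-p_n/q_n|)\bigr)+O(1)$. The excursion time is the length of the geodesic segment inside the horoball, and it is comparable to twice this depth up to an additive constant. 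I would use the definition of $t_n^*$ from Baumgartner--Pollicott, adapted to $\mathcal O$, to check that the $n$-th maximal excursion matches the $n$-th Hurwitz convergent. If the natural quantity turns out to be cumulative, I would express it as a sum so that it telescopes.

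Next I would use the standard Hurwitz estimates $|z-p_n/q_n|\asymp |q_n|^{-1}|q_{n+1}|^{-1}$ (true up to uniform constants, from the Hurwitz theory in Section 3) and the growth property $|q_n|\to\infty$ exponentially. Combining these, the quantity $\frac1n\bigl(t_n^*+O(n)\bigr)$ becomes $\frac1n\log(|q_n||q_{n+1}|)$ up to $o(1)$ errors. This is also $-\frac1n\log|z-p_n/q_n|+o(1)$, and both equal the Birkhoff average of the geometric potential $-\log|T'|$, i.e.\ the Lyapunov exponent $2\lim\frac1n\log|q_n|$ of the Hurwitz CGDS. The normalisation must be chosen so that no factor of $2$ survives between the two sides, and I would verify this against the definition of $t_n^*$.

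The main obstacle is this geometric matching step. One has to show that the additive errors are uniformly bounded, or at least $o(n)$, and that every cusp excursion is accounted for by a convergent. Hurwitz convergents omit some good approximations, and excursions may also occur through horoballs at non-convergent rationals. My first approach would be a Legendre-type theorem for Hurwitz fractions: any $p/q$ with $|z-p/q|<c|q|^{-2}$ is a convergent, for a suitable constant $c$. This ensures that only boundedly shallow excursions are missed, which do not affect the asymptotic average.
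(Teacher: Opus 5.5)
Your top-level reduction is the paper's: identify the excursion rate with the approximation rate, conclude $\hat S(\alpha)=S(\alpha)$, and invoke Theorem \ref{thm:main-1}. The difference is that the paper does not derive the identity from horoball geometry. It imports $\lim_n t_n^*/n=\lim_n \frac{2}{n}\log|q_n|$ from Baumgartner--Pollicott (Proposition \ref{prop:secion7BP}) and combines it with Proposition \ref{prop:connection}. This also settles your factor-of-$2$ question, since both rates equal $2\lim_n\frac1n\log|q_n|$.

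The step you propose in place of that citation does not work as written, and it is a genuine gap.

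\emph{Your model of $t_n$ is wrong.} With the definition used here, $t_n$ is not the length or depth of a single excursion. $\tilde\gamma_n$ is the image of $\tilde\gamma_0$ under the isometry $g_n$ obtained by composing the maps $T_{\pm a_k}S$. So $t_n$ is the time at which $\tilde\gamma_0$ itself crosses the hemisphere $g_n^{-1}H(0)$, which is a cumulative time along one geodesic. Under your model the identity is false. By your own formulas, the $i$-th excursion has depth $\log\bigl(1/(|q_i|^2|z-p_i/q_i|)\bigr)+O(1)=\log(|q_{i+1}|/|q_i|)+O(1)$. For any $z$ with bounded Hurwitz digits all these depths are $O(1)$. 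Taking the maximum over $i\le n$ then gives $t_n^*=O(1)$ and $t_n^*/n\to0$, while $\frac2n\log|q_n|$ stays bounded away from $0$.

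\emph{The fallback does not close the gap either.} Summing excursion lengths so that they telescope incurs an $O(1)$ error at each of the $n$ steps. That is exactly the $O(n)$ you wrote, and after dividing by $n$ it does not become $o(1)$. So the step you call the main obstacle is left open, and with per-excursion $O(1)$ bounds it cannot be closed.

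\emph{The Legendre-type theorem is unnecessary.} The $t_n$ are built from the Hurwitz digits themselves, so only convergents ever enter.

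To fix the argument, use Proposition \ref{prop:secion7BP} together with Proposition \ref{prop:connection}, as the paper does. If you want a self-contained proof, work directly with the crossing time of $g_n^{-1}H(0)$ along the vertical geodesic over $z$, not with excursion depths.
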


\subsection{Strategies of the proofs}
We proceed symbolically. For any $\om$ in the symbolic space $E_A^\infty$ of the Hurwitz CGDS, $\pi(\om)$ is a Gaussian irrational number in $X$, where $\pi : E_A^\infty \to J$ is the projection map and $J\subset X$ is the limit set of the Hurwitz CGDS.

Denote by $\lambda(\om)$ the Lyapunov exponent of $\om$. 
The connection between $\lambda(\om)$, the asymptotic approximation rate and the asymptotic average excursion time is given by the following identity
\begin{align*}
\lambda(\omega)= 2\lim_{n\to\infty}\frac{1}{n}\log |q_n|
&=-\lim_{n\to\infty} \frac{1}{n}
    \log
    \left|
    \pi(\omega)-\frac{p_n}{q_n}
    \right|\\
&=\lim_{n\to\infty}
    \frac{t_n^*(\pi(\omega))}{n}.
\end{align*}
Thus the Lyapunov multifractal decomposition of the Hurwitz CGDS gives a unified framework for studying both complex Diophantine approximation and cusp excursions. We obtain a description of the Lyapunov spectrum in terms of the associated pressure function by applying the general theory of multifractal analysis of Birkhoff averages for confinitely regular finitely irreducible CGDMS satisfying the strong open set condition developed in \cite{NDDis}. In particular, our symbolic approach to the multifractal analysis on $J$ is enough for our conclusions by \cite[Proposition 6.5]{NDDis} and the countable stability of Hausdorff dimension.

\subsection{Organization of the paper}
The paper is organized as follows. In Section 2 we construct the Hurwitz conformal graph directed system and establish the thermodynamic properties needed later. In Section 3 we relate Lyapunov exponents to complex Diophantine approximation and prove the first main theorem. In Section 4 we apply the same Lyapunov analysis to cusp excursions on the Bianchi orbifold and prove the second main theorem.

\section{Thermodynamic formalism}
In this section, we construct a conformal graph directed system (CGDS) associated to the Hurwitz map, which is the thermodynamic framework used throughout the paper. We verify the properties of this CGDS needed to apply thermodynamic formalism: finite irreducibility, finite primitivity, the strong open set condition and cofinitely regularity. We then describe the associated pressure function, the Lyapunov pressure, and the equilibrium measures that will be used later in the multifractal analysis of approximation rates and cusp excursions.

\subsection{The Hurwitz map}
Let
$$
X := \{x+iy \in \mathbb{C} : x,y \in [-\tfrac12,\tfrac12) \}
$$ 
be the unit square in the complex plane centered at the origin. 
Recall that the \emph{Hurwitz map} $T \colon X \to X$ is defined as
$$
T(z) := \begin{cases}
\frac{1}{z} - f(\frac{1}{z}), {\rm if ~} z \neq 0;\\
0, {\rm if~} z = 0.
\end{cases}
$$
where $f(w)$ denotes rounding $w \in \mathbb{C}$ to the nearest Gaussian integer in $\mathbb{Z}[i]$, i.e., 
$f(w):=\lfloor \operatorname{Re}(w)+\frac12\rfloor
+i\lfloor\operatorname{Im}(w)+\frac12\rfloor.$
Denote by
$$D := \ZZ[i]\setminus \{0,\pm1,\pm i\}.$$
The inverse branches of $T$ are indexed by $a \in D$, namely, $\phi_a(z) :=  \frac{1}{a+z}$.

\subsection{The Hurwitz CGDS}
The goal of this section is to construct a conformal graph directed system in the sense of 
Urba\'nski, Roy and Munday (see \cite[Definition 19.3.1]{Urbanski22}) for the Hurwitz map.

We begin with the prototype sets. For any given $a\in D$ there exists a maximal closed region $Q_a\subseteq \overline{X}$ such that $\phi_a\vert_{Q_a}(Q_a)\subseteq \overline{X}$. We call these $Q_a$ sets the {\it prototype sets}. In many cases, $Q_a = \overline{X}$, but this need not be the case. As described in \cite[Remark 5.8]{BGH23} there are $13$ such sets.  Now let 
$$
V=\cX := \{C_1,C_2,C_3,C_4,H_1,H_{i},H_{-1},H_{-i},P_1,P_2,P_3,P_4\}
$$
be the collection of regular closed sets given by the disjointification of these $13$ prototype sets. These sets can be seen in Figure \ref{fig:1} and they weakly partition $\overline{X}$ as they overlap on their boundaries. 

\begin{figure}[H]
\begin{tikzpicture}
    \begin{axis}[
        axis lines=center,
        label style={font=\tiny},
        xlabel={${\rm Re}(z)$},
        ylabel={${\rm Im}(z)$},
        xmin=-.7, xmax=.7,
        ymin=-.7, ymax=.7,
        ticks = none,
        grid=none,
        width=8cm, height=8cm,
    ]
        \addplot[
            domain=0:1, 
            samples=100,
            smooth,
            thick,
            blue
        ] ({cos(deg(-pi/3 - pi/3*x))}, {sin(deg(-pi/3 - pi/3*x)) + 1});
        \addplot[
            domain=0:1, 
            samples=100,
            smooth,
            thick,
            blue
        ] ({cos(deg(pi/3 + pi/3*x))}, {sin(deg(pi/3 + pi/3*x)) - 1});
        \addplot[
            domain=0:1, 
            samples=100,
            smooth,
            thick,
            blue
        ] ({cos(deg(-pi/6 + pi/3*x))-1}, {sin(deg(-pi/6 + pi/3*x))});
        \addplot[
            domain=0:1, 
            samples=100,
            smooth,
            thick,
            blue
        ] ({cos(deg(5*pi/6 + pi/3*x))+1}, {sin(deg(5*pi/6 + pi/3*x))});
        \addplot[
            domain=0:1, 
            samples=100,
            smooth,
            thick,
            blue
        ] ({cos(deg(pi/6 + pi/6*x))-1}, {sin(deg(pi/6 + pi/6*x))-1});
        \addplot[
            domain=0:1, 
            samples=100,
            smooth,
            thick,
            blue
        ] ({cos(deg(-pi/3 + pi/6*x))-1}, {sin(deg(-pi/3 + pi/6*x))+1});
        \addplot[
            domain=0:1, 
            samples=100,
            smooth,
            thick,
            blue
        ] ({cos(deg(-2*pi/3 - pi/6*x))+1}, {sin(deg(-2*pi/3 - pi/6*x))+1});
        \addplot[
            domain=0:1, 
            samples=100,
            smooth,
            thick,
            blue
        ] ({cos(deg(-4*pi/3 + pi/6*x))+1}, {sin(deg(-4*pi/3 + pi/6*x))-1});
        \addplot[
            domain=0:1,
            samples=100,
            smooth,
            thick,
            blue
        ] ({.5},{x-.5});
        \addplot[
            domain=0:1,
            samples=100,
            smooth,
            thick,
            blue
        ] ({x-.5},{.5});
        \addplot[
            domain=0:1,
            samples=100,
            smooth,
            thick,
            blue
        ] ({-.5},{x-.5});
        \addplot[
            domain=0:1,
            samples=100,
            smooth,
            thick,
            blue
        ] ({x-.5},{-.5});

        \addplot[black,no marks] coordinates{(-.5,-.5)}
            node[above right] {$C_3$};
        \addplot[black,no marks] coordinates{(.5,-.5)}
            node[above left] {$C_4$};
        \addplot[black,no marks] coordinates{(.5,.5)}
            node[below left] {$C_1$};
        \addplot[black,no marks] coordinates{(-.5,.5)}
            node[below right] {$C_2$};

        \addplot[black,no marks] coordinates{(0,-.5)}
            node[above] {$H_{i}$};
        \addplot[black,no marks] coordinates{(0,.5)}
            node[below] {$H_{-i}$};
        \addplot[black,no marks] coordinates{(.5,0)}
            node[left] {$H_1$};
        \addplot[black,no marks] coordinates{(-.5,0)}
            node[right] {$H_{-1}$};

        \addplot[black,no marks] coordinates{(.2,-.2)}
            node {$P_4$};
        \addplot[black,no marks] coordinates{(.2,.2)}
            node {$P_1$};
        \addplot[black,no marks] coordinates{(-.2,.2)}
            node {$P_2$};
        \addplot[black,no marks] coordinates{(-.2,-.2)}
            node {$P_3$};
        
    \end{axis}
\end{tikzpicture}
\caption{The sets in $\cX$.}
        \label{fig:1}
\end{figure}

By definition of the prototype sets, for every $a\in D $ there exists 
$\cY(a)\subseteq \cX$ such that for all $Y\in \cY(a)$, the inversion $\phi_a$ is such that $\phi_a(Y)\subseteq \overline{X}$. When $\cY(a)\neq \cX$, the sets in $\cY(a)$ are given in Table \ref{tab:1}. 
\begin{lemma}
    Given $a\in D$ and $Y\in \cY(a)$ there is a unique $W\in \cX$ such that $\phi_{a}\vert_Y(Y)\subseteq W$. 
\end{lemma}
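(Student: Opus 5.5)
The plan is to use the explicit geometry of the partition $\cX$ and of the maps $\phi_a(z)=1/(a+z)$. Each map $z\mapsto 1/z$ is a Möbius transformation, so it sends lines and circles to lines and circles. The boundaries of the pieces in Figure \ref{fig:1} are of two kinds: the sides of the square $\overline{X}$ (lines $\operatorname{Re} z=\pm\frac12$ and $\operatorname{Im} z=\pm\frac12$), and arcs of the unit circles centred at $\pm1$, $\pm i$ and $\pm1\pm i$.

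The key observation is that under $w\mapsto 1/w$ these curves correspond. The line $\operatorname{Re} w=\frac12$ is the image of the circle $|z-1|=1$, and similarly for the other sides. A circle $|w-b|=1$ with $b\in\ZZ[i]$ maps to a line or circle that passes through $1/(b\pm\dots)$; translating by $-a$ shows that the boundary arcs of the pieces of $\cX$ are exactly the pullbacks, under $T$, of the sides of the square and of the arcs $|z-b|=1$. Concretely, I would show that $\phi_a$ maps the finite family $\mathcal{B}$ of boundary curves (the four sides together with the eight unit-circle arcs) into $\mathcal{B}\cup(\text{complement of } \operatorname{int}\overline{X})$. The images of the square's sides under $\phi_a$ are arcs of the circles $|z-1/(2a')|$-type. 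I would check directly that, for $a$ with $\mathcal{Y}(a)\neq\cX$ (listed in Table \ref{tab:1}), these images are among the unit circles centred at Gaussian integers. For $|a|$ large ($|a|\ge 2\sqrt2$ or so), I would show that $\phi_a(\overline{X})$ is a small set contained in the interior of a single piece, or at least disjoint from the interiors of the boundary arcs.

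From this, the proof follows. Fix $a\in D$ and $Y\in\cY(a)$. The set $\operatorname{int} Y$ is connected, and $\phi_a(\operatorname{int}Y)$ is a connected subset of $\overline{X}$. By the boundary-correspondence claim, $\phi_a(\operatorname{int}Y)$ avoids every curve in $\mathcal{B}$: a point of $\operatorname{int} Y$ mapping onto a boundary curve would force a boundary curve of the partition to pass through $\operatorname{int}Y$. So $\phi_a(\operatorname{int}Y)$ lies in a single connected component of $\overline{X}\setminus\bigcup\mathcal{B}$, that is, in the interior of a unique $W\in\cX$. Taking closures, and using that $Y$ is regular closed, gives $\phi_a(Y)\subseteq W$. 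Uniqueness of $W$ holds because the pieces of $\cX$ have disjoint interiors and $\phi_a(\operatorname{int}Y)$ is a nonempty open set.

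The main obstacle is the finite case check. For the small $a$ (those with $|a|\le 2\sqrt2$, i.e.\ $a=\pm1\pm i,\pm2,\pm2i,\pm2\pm i,\pm1\pm2i,\pm2\pm2i$ and their neighbours, up to the symmetry group of the square), I must verify that the image of each piece is exactly aligned with the partition arcs. I would first reduce by the dihedral symmetries $z\mapsto iz$ and $z\mapsto\bar z$, which conjugate $\phi_a$ to $\phi_{i^{-1}a}$ or $\phi_{\bar a}$ and permute $\cX$. This leaves only a few values of $a$. For each, I would compute the images of the corner points $\pm\frac12\pm\frac i2$ and of the arc endpoints, and compare them with Table \ref{tab:1} and \cite[Remark 5.8]{BGH23}.
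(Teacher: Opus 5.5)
Your connectedness deduction in the third paragraph is fine, but the boundary property it relies on is not the one you propose to verify. The step ``a point of $\operatorname{int}Y$ mapping onto a boundary curve would force a boundary curve of the partition to pass through $\operatorname{int}Y$'' needs the \emph{backward} inclusion $\phi_a^{-1}(\mathcal B)\cap\operatorname{int}\overline X\subseteq\mathcal B$. Equivalently, the branch $u\mapsto 1/u-a$ of $T$ must carry boundary curves into boundary curves. This is a Markov property of the partition under $T$; it is not a statement that $\mathcal B$ is made of $T$-pullbacks.

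You instead plan to prove the forward inclusion $\phi_a(\mathcal B)\subseteq\mathcal B\cup(\mathbb C\setminus\operatorname{int}\overline X)$. Even if true, this would not give the step. It is also false. Take $a=2$, a row of Table~\ref{tab:1}. The right side $\operatorname{Re}z=\tfrac{1}{2}$ bounds $H_1,C_1,C_4\in\cY(2)$, and $\phi_2$ sends it onto an arc of $|u-\tfrac{1}{5}|=\tfrac{1}{5}$ whose relative interior lies in $\operatorname{int}H_1$. In general the line $\operatorname{Re}w=s$ goes under $w\mapsto 1/w$ to the circle $|u-\tfrac{1}{2s}|=\tfrac{1}{2|s|}$, which is a unit circle only for $s=\pm\tfrac{1}{2}$. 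So your planned check that the images of the sides are unit circles about Gaussian integers would fail. Likewise, for large $|a|$ the set $\phi_a(\overline X)$ need not lie in the interior of one piece: two edges of $\phi_3(\overline X)$ lie on the arcs $|u\pm i|=1$ bounding $H_1$.

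The backward property is true for every $a\in\ZZ[i]$ and needs no case analysis. Under $w\mapsto 1/w$, the circles $|u-c|=1$ with $c\in\{\pm1,\pm i\}$ go to the lines $\operatorname{Re}w=\pm\tfrac{1}{2}$ and $\operatorname{Im}w=\pm\tfrac{1}{2}$. The circles with $|c|=\sqrt2$ go to $|w-\bar c|=1$. After translating by $-a$ you get one of two things:
\begin{itemize}
\item a line with real or imaginary part in $\tfrac{1}{2}+\ZZ$, which misses $\operatorname{int}\overline X$;
\item a unit circle about some $g\in\ZZ[i]$, which meets $\overline X$ only if $|g|\in\{1,\sqrt2\}$, and then exactly in one of the eight arcs of $\mathcal B$.
\end{itemize}
Hence if some $p\in\operatorname{int}Y$ had $\phi_a(p)$ on one of the eight arcs, then $p=1/\phi_a(p)-a$ would lie on an arc of $\mathcal B$, which is impossible. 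The sides need no separate treatment, because $\phi_a(\operatorname{int}Y)$ is open and contained in $\overline X$, hence lies in $\operatorname{int}\overline X$.

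With this replacement your argument proves the lemma uniformly: a connected open set avoiding the arcs, then closure via regular closedness, then uniqueness from disjoint interiors. The finite check you flagged as the main obstacle disappears. This is a genuinely different and more self-contained route than the paper's, which obtains $W$ case by case from Propositions~\ref{edges1}--\ref{edges6} and Tables~\ref{tab:1}--\ref{tab:2}. The paper's route does additionally identify which $W$ occurs, and that information is used later for finite irreducibility.
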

\begin{proof}
Observe that for each $a\in D$, there exists $\cZ(a)\subseteq \cX$ such that $\phi_a(Q_a)\subset\bigcup \cZ(a)$ as seen in Figure \ref{fig:Cylinders}. For $|a|>\sqrt{8}$, $|a| \leq 2$ , $\cZ(\alpha)$ is a singleton by Propositions \ref{edges1}, \ref{edges2}, \ref{edges4} and \ref{edges5}. So $W$ is unique and we are done. 

If not, then $\cZ(\alpha)$ contains exactly $2$ sets, $P_k$ and $C_k$ for some $k$. By Proposition \ref{edges3} and Table \ref{tab:2}, each $Y\in \cY(a)$ maps uniquely into one of these sets.
\end{proof}

The previous lemma defines an edge set $E\subset D\times \cX$ between the vertices in $V$. That is, we say there exists a directed edge $e$ with $i(e) = W$ and $t(e)=Y$ if and only if there exists $a \in D$ such that for the triple $(a,Y,W)\in D\times \cX\times \cX$ the restriction of the inversion $\phi_a\vert_Y$ is such that $\phi_a\vert_Y(Y)\subseteq W$. In this case, we say $e=(a,Y)$, and we define $X_{t(e)} := Y$ and $X_{i(e)}:= W$.

We define the admissibility matrix $A:E\times E\to\{0,1\}$ by
$A_{ef}=1$ if and only if $t(e)=i(f)$. In particular, we will use the adjacency rule of the underlying graph $\cG = (V,E,i,t)$. For each $e = (e_1,e_2)\in E$, there is an associated restricted inversion $\phi_e:X_{t(e)}\to X_{i(e)}$ given by $\phi_e := \phi_{e_1}\vert_{e_2}$. Then, we take $\Phi:=\{\phi_e\}_{e\in E}$.

For this directed multigraph, $\cG$, with admissibility rule, $A$, and the collection of restricted inversions, $\Phi$, we consider the tuple
$$
S=(\Phi,\cG,A).
$$

We will show shortly that $S$ is a specific type of conformal graph directed Markov system (CGDMS). CGDMSs were introduced by Urba\'nski and Mauldin in \cite{MU03}, where they require an additional cone condition. However, we will instead proceed with the definition given by Urba\'nski, Roy and Munday in \cite[Definition 19.3.1]{Urbanski22}. The statement of the definition of these systems is quite lengthy, but we recall it for the reader's convenience. We begin with the notion of a graph directed Markov system (GDMS).

\begin{definition} 
    A GDMS is a triple $(\Phi, \cG,A)$ where $\Phi = \{\phi_e\}_{e\in E}$ is a collection of maps between a finite family of compact metric spaces $\{X_v\}_{v\in V}$, $G = (V,E,i,t)$ is a directed multigraph, and $A:E\times E\to \{0,1\}$ is a matrix with $A_{ef} = 1$ if and only if $\phi_e\circ\phi_f:X_{t(f)}\to X_{i(e)}$ is allowable and defined.  
\end{definition}

Every GDMS is a subsystem of its underlying {\it graph directed system (GDS)} which is the system where $A_{ef}=1$ if and only if $t(e)=i(f)$. A GDS is an {\it IFS} if $|V|=1$ and $A_{ef} = 1$ for all $e,f\in E$. We call a GDS \textit{proper} if it is not an IFS or a subsystem of an IFS.

\begin{definition}\label{def:con}
 A GDMS is called a {\it conformal GDMS} (CGDMS), if its underlying GDS satisfies the following properties
 \begin{itemize}
    \item[1.] For every $v\in V$, there is a $d$ such that $X_v\subset \RR^d$ is regular closed.\\
    \item[2.] $\Phi$ satisfies the open set condition,
    $$
        \phi_{e_1}({\rm Int}(X_{t(e_1)}))\cap\phi_{e_2}({\rm Int}(X_{t(e_2)})) = \emptyset.
    $$
    \item[3.] For every $v\in V$ there is an open, connected and bounded set $W_v$ such that $X_v\subseteq W_v\subseteq \RR^d$ and such that for every $e\in E$ with $t(e) = v$, the map $\phi_e$ extends to a contracting conformal diffeomorphism of $W_v$ into $\RR^d$, and all fo these extensions have a common contraction ratio $s<1$. 
    \item[4.] There exists constants $L\geq 1$ and $\kappa>0$ such that 
    $$
        \left\vert|\phi_e'(y)|-|\phi_e'(x)|\right|\leq L|\phi'_e(x)|\cdot ||y-x||^{\kappa}
    $$
    for every $e\in E$ and every pair of points $x,y\in W_{t(e)}$.
 \end{itemize}
\end{definition}

\begin{definition} 
    We say a CGDMS is \textit{finitely irreducible} if there exists a finite list of admissible paths (or words) $I$ such that for every $(a,b)\in E\times E$ there is a $\tau(a,b)\in I$ such that $a\tau(a,b)b$ is admissible. If all paths (or words) in $I$ are the same length we say the system is \textit{finitely primitive}.
\end{definition}

\begin{theorem}\label{thm:FI}
    $S$ is a proper conformal graph directed system (CGDS) (i.e., not a conformal iterated function system) and finitely irreducible.
\end{theorem}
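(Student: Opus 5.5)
I will verify the four conditions of Definition \ref{def:con} directly for $S=(\Phi,\cG,A)$. Since $A$ is the adjacency rule of $\cG$, $S$ coincides with its underlying GDS. Condition 1 is immediate: each vertex set in $\cX$ is a region of $\overline{X}\subset\RR^2\cong\mathbb C$ bounded by finitely many line segments and circular arcs (Figure \ref{fig:1}), and it is the closure of its interior. For the open set condition (condition 2), I would use that each $\phi_a$ is an inverse branch of $T$. If $z$ lies in $\phi_{e}(\mathrm{Int}X_{t(e)})\cap\phi_{f}(\mathrm{Int}X_{t(f)})$, then $1/z\in(a+\mathrm{Int}X_{t(e)})\cap(b+\mathrm{Int}X_{t(f)})$. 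The translates $a+\mathrm{Int}\,\overline X$ for distinct $a\in\ZZ[i]$ are disjoint, so $a=b$. For $a=b$ the sets $\mathrm{Int}X_{t(e)}$ and $\mathrm{Int}X_{t(f)}$ are disjoint unless $t(e)=t(f)$, because the elements of $\cX$ meet only along their boundaries. By the preceding lemma the edge is then determined, since $W$ is unique; hence $e=f$.

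For conditions 3 and 4, I will exploit that each $\phi_a(z)=1/(a+z)$ is a Möbius map, hence conformal. I take $W_v$ to be a small open neighbourhood of $X_v$, for example the $\delta$-neighbourhood of $\overline X$ intersected with a neighbourhood of $X_v$ that stays away from $-a$ for every allowed $a$. The main obstacle is the uniform contraction. We have $|\phi_a'(z)|=|a+z|^{-2}$, and for $|a|\ge 2$ this is comfortably below $1$ on a neighbourhood of $\overline X$, because $|a+z|\ge |a|-\sqrt2/2-\delta$. The delicate letters are $a=\pm1\pm i$ (with $|a|=\sqrt2$), where $|a+z|$ can approach $\sqrt2-\sqrt2/2=\sqrt2/2$ at the far corner of $\overline X$, giving derivative close to $2$. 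This is exactly why the prototype sets $Q_a$ exclude that corner region. I would check from Table \ref{tab:1} that for $a=\pm1\pm i$ the allowed sets $Y\in\cY(a)$ stay away from the point $-a$ by more than $1$ (the boundary arcs are pieces of unit circles centred at $-a$), so that $|a+z|>1$ on $Y$. If this fails to give a uniform $s<1$ for single maps, the fallback is the standard one: replace $\Phi$ by a bounded iterate, or note that \cite{Urbanski22} permits contraction of some fixed iterate. For bounded distortion (condition 4), on $W_v$ we have $\big||a+y|^{-2}-|a+x|^{-2}\big|\le C|a+x|^{-2}|x-y|$, uniformly in $a$, since $|a+z|$ is bounded below and $\log|a+z|$ has gradient $\le 1/|a+z|$. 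This gives $\kappa=1$.

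Properness: $|V|=12>1$, and some edges have $\cY(a)\ne\cX$ (e.g.\ $a=1+i$), so $A$ is not identically $1$ and $S$ is not an IFS or a subsystem of one. Finite irreducibility: I will show that the graph $\cG$ is strongly connected. Each vertex $Y$ is the terminal vertex of some edge $e$ and the initial vertex of some edge $f$. The key observation is that for $|a|$ large, $\cY(a)=\cX$ and $\phi_a(\overline X)$ lies in the vertex $P_k$ determined by the quadrant of $1/a$ (Propositions \ref{edges1}--\ref{edges5}). So from any vertex one can move to each $P_k$ in one step. From the $P_k$ one reaches every $C_j$ and $H_j$ by choosing the appropriate small $a$, read off from Figure \ref{fig:Cylinders} and Table \ref{tab:2}. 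Because $V$ is finite, connecting words of length at most $2$ or $3$ exist for all pairs. Taking $I$ to be the finite set of these words gives finite irreducibility.
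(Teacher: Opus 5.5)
Your proposal is correct and follows the paper's route in substance. Finite irreducibility is obtained exactly as in the paper: large letters with $\cY(a)=\cX$ reach every $P_k$ and $H_j$ from any vertex, and the $|a|=\sqrt5$ letters of Table~\ref{tab:2} carry a suitable $P_k$ into each $C_j$. Your proofs of the open set condition (inversion plus disjointness of the translates $a+\mathrm{Int}\,\overline X$, then uniqueness of $W$) and of bounded distortion (direct M\"obius estimate with $\kappa=1$) make explicit what the paper leaves to Figure~\ref{fig:Cylinders} and \cite[Remark 19.3.2(c)]{Urbanski22}.

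The step whose justification is wrong is uniform contraction for $a=\pm1\pm i$. The arcs bounding $Q_a$ do not lie on the unit circle about $-a$: for $a=1+i$ one has $Q_a=\{w\in\overline X: |w+1|\ge 1,\ |w+i|\ge 1\}$. Had your description been right, you would only get $|a+w|\ge 1$, with equality along the arc, and so no uniform $s<1$. The correct argument is one line and handles every edge at once. If $(a,Y)$ is an edge and $w\in Y$, then $\phi_a(w)\in\overline X$, so $|\phi_a'(w)|=|a+w|^{-2}=|\phi_a(w)|^2\le\tfrac12$. Hence $|a+w|\ge\sqrt2$ on $Y$, and $|\phi_a'|\le(\sqrt2-\delta)^{-2}<1$ on the $\delta$-neighbourhood of $Y$, uniformly in $a$. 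This is where the paper's $s=1/(2-\eta)$ comes from, and it makes both your case split by $|a|$ and the fallback to iterates unnecessary.

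In the properness step, the operative fact is the one you cite, $\cY(1+i)\neq\cX$: $\phi_{1+i}$ is not a self-map of $\overline X$, so the system cannot live on a single vertex. The conclusion $A\not\equiv 1$ is not what matters, since a one-vertex Markov subsystem of an IFS can also have $A\not\equiv 1$. Your criterion also differs from the paper's, which appeals to a pair of vertices with no edge between them. By Propositions~\ref{edges1}--\ref{edges5} and Table~\ref{tab:2}, every vertex in fact maps into every vertex in one step, so your criterion is the safer one.
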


\begin{proof}
    From the construction of $S$ at the beginning of this section, it is clear that $S$ is a GDS. 
    
    For conformality, we note that for the collection of maps $\Phi:=\{\phi_e\}_{e\in E}$ and a fixed $0<\eta<1$ small enough, we have a common contraction ratio of $s= \frac{1}{2-\eta}$ on a bounded open neighborhood $W_{t(e)}\supset X_{t(e)}\in \cX$ on which the map $\phi_e$ is a contracting conformal diffeomorphism. $\Phi$ also satisfies the open set condition (see Figure \ref{fig:Cylinders}). Further by \cite[Remark 19.3.2(c)]{Urbanski22}, there exists $L\geq 1$ such that 
    $$
        \left\vert|\phi_e'(y)|-|\phi_e'(x)|\right|\leq L|\phi'_e(x)|\cdot |y-x|
    $$
    for every $e\in E$ and pair of points $x,y\in W_{t(e)}$. Hence the four properties  required of the underlying GDS in Definition \ref{def:con} are satisfied, and $S$ is a CGDS.
    
    For properness, the case analysis in Propositions \ref{edges1}-\ref{edges6} demonstrates that it is necessary that $|V|>1$ and that under the adjacency rule $A$ there exists $V_1,V_2\in V$ such that there is no edge $e$ such that $t(e) = V_1$ and $i(e)=V_2$. So $S$ is not an IFS nor is it a subsystem of an IFS as desired. 
    
    For finite irreducibility of the Hurwitz CGDS, we construct a finite words list $I$. Consider $((a,Y),(b,Z))\in E\times E$ where $\phi_{(a,Y)}:Y\to W$ and $\phi_{(b,Z)}: Z\to V$. We proceed by cases on $Z$. 
    
    If $Z\neq C_k$ for all $k\in \{1,2,3,4\}$ we may choose for membership in $I$ an appropriate $e\in E$ with $|e_1| \in\{3, \sqrt{10}\}$ such that $\phi_e: W\to Z$ by Propositions \ref{edges1} and \ref{edges2}. So $(a,Y)e(b,Z)$ is a valid path. 
    
    Next, if $Z= C_k$ for some $k$, we first pick $e\in E$ as in the previous case so that $\phi_e$ maps $W$ to $P_k$. Then, take $f\in E$ by the rule
    $$
    \begin{cases}
        f_1 = 1-2i &{\rm if \;} k=1\\
        f_1 = -1-2i &{\rm if \;} k=2\\
        f_1 = -1+2i &{\rm if \;} k=3\\
        f_1 = 1+2i &{\rm if \;} k=4
    \end{cases}
    $$
    Such a choice of $f$ exists by Proposition \ref{edges6}. For this choice of $f$, $(a,Y)ef(b,Z)$ is a valid path and so we include $ef$ in $I$.
    
    Hence $I$ is a subset of $\{e\in E\;\vert\; |e_1| \in\{3, \sqrt{10}\}\}\cup \{ef\in E^2\;|\; |e_1|\in\{3, \sqrt{10}\},|f_1| =\sqrt{5}\}$ which is a finite set, so $I$ is finite and witnesses the finite irreducibility of the Hurwitz CGDS as desired.  
\end{proof}

We call $S$ the Hurwitz CGDS. The symbolic representation of the Hurwitz CGDS is given by
$$
    E_{A}^{\infty}:= \{\om\in E^{\infty}\:|\; A(\om_i,\om_{i+1}) = 1, \forall i\in \NN\}
$$
Often times we will refer to $S$ by $\Phi$ alone where the symbolic representation $E_A^{\infty}$ is inferred without reference to $\cG$. The set of finite admissible words in the symbolic representation is $E_A^{*}$, and the set of finite admissible words of length $n$ is $E_A^{n}$. For any $\om\in E_A^{\infty}$, define $\om\vert_n:=\om_1\dots\om_n\in E_A^n$. For any finite admissible word $w\in E_A^n$ define $t(w) := t(w_n)$ and $i(w) := i(w_1)$. 

Then define
$$\phi_{w} = \phi_{w_1}\circ \dots \circ \phi_{w_n}:X_{t(w_n)}\to X_{i(\om_1)}.
$$

This is consistent with notation in \cite{Urbanski22} regarding conformal graph directed Markov systems of which conformal graph directed systems are a special case. 

\begin{corollary}
    The Hurwitz CGDS is finitely primitive, in fact, we may find $I$ witnessing the finite primitivity such that the length of each word in $I$ is exactly $2$.
\end{corollary}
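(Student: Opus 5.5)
The plan is to reuse the words list $I$ from the proof of Theorem \ref{thm:FI} and to pad it so that every connecting word has length exactly $2$. That proof produced two kinds of connectors between $(a,Y)$ and $(b,Z)$, where $\phi_{(a,Y)}:Y\to W$:
\begin{itemize}
\item a single edge $e$ with $|e_1|\in\{3,\sqrt{10}\}$ and $\phi_e:W\to Z$, used when $Z$ is not a corner set $C_k$;
\item a pair $ef$ with $|e_1|\in\{3,\sqrt{10}\}$ and $|f_1|=\sqrt5$, used when $Z=C_k$, where $e$ maps $W$ into $P_k$ and $f$ maps $P_k$ into $C_k$.
\end{itemize}
The second kind already has length $2$. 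So it suffices to replace each length-one connector $e$ by some admissible word $e'e''$ with $i(e'')=t(e')$, $e'$ defined on $W$, and $e''$ landing in $Z$.

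To build these, first apply the first case of Theorem \ref{thm:FI} with target vertex $P_1$ (any fixed non-corner vertex would do). This gives an edge $e'$ with $|e'_1|\in\{3,\sqrt{10}\}$ such that $\phi_{e'}$ maps $W$ into $P_1$; in edge notation, $t(e')=W$ and $i(e')=P_1$. Next apply the same case with starting set $P_1$ and the original non-corner target $Z$. This gives an edge $e''$ with $|e''_1|\in\{3,\sqrt{10}\}$ that maps $P_1$ into $Z$. Then $(a,Y)e'e''(b,Z)$ is admissible. Here I should double-check the orientation conventions $t(e)=i(f)$ against the composition order, exactly as was done in Theorem \ref{thm:FI}.

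Set
$$I'=\{e'e''\}\cup\{ef\},$$
where $e'$ ranges over the finitely many choices indexed by $W\in\cX$ and $e''$ over the finitely many choices indexed by non-corner $Z\in\cX$. Then $I'$ is finite because $\cX$ has twelve elements and the relevant edges have bounded $|e_1|$. Every word in $I'$ has length exactly $2$, and by construction $I'$ witnesses irreducibility for every pair in $E\times E$. Hence the system is finitely primitive.

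The main obstacle is checking the claim used in the first step: for \emph{every} $W\in\cX$, including the corner sets $C_k$ and the half sets $H_j$, there is an edge with $|e_1|\in\{3,\sqrt{10}\}$ carrying $W$ into $P_1$. In particular this requires $W\in\cY(e_1)$ for such an edge. I would confirm this from Propositions \ref{edges1} and \ref{edges2} and Table \ref{tab:1}. The expectation is that large $|a|$ gives $Q_a=\overline X$, so any $W$ is allowed, while the image $\phi_a(\overline X)$ lies in a single prescribed $P_k$ or $H_j$. If $P_1$ turns out not to be reachable from some $W$, I would instead choose the intermediate vertex depending on $W$ among the $P_k$, which preserves finiteness and length $2$.
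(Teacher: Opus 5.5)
Your proposal is correct and is essentially the paper's argument. Both keep the length-$2$ connectors from Theorem \ref{thm:FI} and pad each single-edge connector to length $2$ using edges with $|b|\in\{3,\sqrt{10}\}$ from Propositions \ref{edges1} and \ref{edges2}. The paper inserts a self-loop $(b,Z)$ at the non-corner landing vertex $Z$, whereas you detour through $P_1$. Your ``main obstacle'' is immediate: $b=3-i$ has prototype region $\overline{X}$, so by Proposition \ref{edges2} it carries every $W\in\cX$ into $P_1$. As you suspected, under $A_{ef}=1\iff t(e)=i(f)$ your two letters should be written $e''e'$; this slip is inherited from the paper's proof of Theorem \ref{thm:FI} and does not affect the argument.
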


\begin{proof}
    Take $I$ that was found in the argument of finite irreducibility. Then replace any word of length $1$, $(a,W)$, with $(a,W)(b,Z)$ where $|b|\in\{3, \sqrt{10}\}$ and $\phi_{(b,Z)}$ sends $Z$ to itself. Such a $(b,Z)$ exists by Propositions \ref{edges1} and \ref{edges2}.
\end{proof}

The diameters of images of maps in the Hurwitz CGDS satisfy the following estimate which is a consequence of \cite[Lemmas 19.3.9 and 19.3.11]{Urbanski22}.
    \begin{lemma}\label{lem:diam1}
        Let $\Phi = \{\phi_e\}_{e\in E}$ be the CGDS. Then, there exists a constant $D\geq 1$ and a bounded distortion constant $K\geq 1$ such that 
        $$
            D^{-1}\|\phi_{\om}'\|_{W_{t(\om)}}\leq {\rm diam}(\phi_{\om}(X_{t(\om)}))\leq KD\|\phi_{\om}'\|_{X_{t(\om)}}, \qquad \forall \om\in E_{A}^{*}.
        $$
    \end{lemma}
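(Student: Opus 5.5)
The plan is to deduce both inequalities from two standard facts about conformal systems: the bounded distortion property, and a mean-value argument that uses the geometry of the sets in $\cX$. The paper says the lemma follows from \cite[Lemmas 19.3.9 and 19.3.11]{Urbanski22}, so the task is to check that the Hurwitz CGDS satisfies their hypotheses and to trace where the constants $D$ and $K$ come from.

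First, the bounded distortion property. By Theorem \ref{thm:FI} the Hurwitz system is a CGDS with Hölder exponent $\kappa=1$. Each $W_v$ can be taken to be a slightly fattened copy of $X_v$. For $\om\in E_A^*$, write $\phi_\om$ as a composition and apply condition 4 of Definition \ref{def:con} to each factor. Together with the uniform contraction $s<1$, this gives a geometric series, so $\log|\phi_\om'|$ has uniformly bounded oscillation on $W_{t(\om)}$ (after shrinking $W_v$ if needed). The result is a constant $K\ge 1$ with $|\phi_\om'(y)|\le K|\phi_\om'(x)|$ for all $x,y\in W_{t(\om)}$. This is \cite[Lemma 19.3.9]{Urbanski22}, and it also gives $\|\phi_\om'\|_{W_{t(\om)}}\le K\|\phi_\om'\|_{X_{t(\om)}}$.

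Upper bound. Each $X_v\in\cX$ is a regular closed set bounded by finitely many line segments and circular arcs (Figure \ref{fig:1}). Hence any two points of $X_v$ are joined by a path in $W_v$ of length at most $D_1|x-y|$ for a uniform $D_1$; the quasiconvexity argument works because there are only finitely many vertices. Integrating $|\phi_\om'|$ along such a path gives $|\phi_\om(x)-\phi_\om(y)|\le D_1\|\phi_\om'\|_{W}\,|x-y|\le D_1K\|\phi_\om'\|_{X}\operatorname{diam}(X_v)$. Absorbing $\operatorname{diam}(X_v)\le\sqrt2$ into $D$ yields the right-hand inequality.

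Lower bound. This is \cite[Lemma 19.3.11]{Urbanski22}, and it is the step I expect to be the main obstacle, since it needs the images to contain balls of comparable size. Each $X_v$ has nonempty interior, so it contains a ball $B(x_v,r)$ with $r>0$ uniform over the finitely many $v$. By the Koebe distortion theorem in dimension $2$, applied to the conformal (here Möbius) map $\phi_\om$ on a ball of radius $r$ inside $W_{t(\om)}$, we get $\phi_\om(B(x_v,r))\supseteq B(\phi_\om(x_v),\tfrac14|\phi_\om'(x_v)|r)$. Therefore $\operatorname{diam}\phi_\om(X_{t(\om)})\ge \tfrac{r}{2}|\phi_\om'(x_v)|\ge \tfrac{r}{2K}\|\phi_\om'\|_{W_{t(\om)}}$. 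Choosing $D\ge\max\{2K/r,D_1\sqrt2\}$ then gives both inequalities.

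The delicate point is uniformity. Koebe requires $\phi_\om$ to be univalent on a fixed neighborhood of $B(x_v,r)$, independent of $\om$. This holds because every $\phi_e$ is a Möbius map $z\mapsto 1/(a+z)$ with $|a|\ge\sqrt2$, which is injective on $W_v$. Compositions of injective maps are injective, so each $\phi_\om$ is univalent on $W_{t(\om)}$ and the constants do not depend on $\om$.
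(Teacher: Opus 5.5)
Your proposal is correct and follows the same route as the paper, which obtains the lemma directly by citing \cite[Lemmas 19.3.9 and 19.3.11]{Urbanski22}. You unpack those two lemmas in the standard way: bounded distortion on $W_{t(\om)}$ from the H\"older condition and uniform contraction, the upper bound by integrating $|\phi_\om'|$ along short paths in $W_v$, and the lower bound from the Koebe $\tfrac14$ theorem on a ball inside $X_v$ combined with bounded distortion.
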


The limit set of the Hurwitz CGDS is given by 
$$
    J := \bigcup_{\om\in E_A^{\infty}}\bigcap_{n=1}^{\infty}\phi_{\om|_n}(X_{t(\om|_n)})
$$
and the coding or projection map $\pi:E_A^{\infty}\to J$ of the symbolic representation onto the limit set is defined by
$$
    \{\pi(\om)\} = \bigcap_{n=1}^{\infty}\phi_{\om|_n}(X_{t(\om|_n)}).
$$
Moreover, $\pi$ is the semi-conjugacy map between the natural shift map $\sigma: E_A^{\infty}\to E_A^{\infty}$ and $T\vert_J:J\to J$, and $J$ is the collection of Hurwitz irrationals $X\cap \QQ(i)^c$. 

\begin{definition} We say a CGDMS satisfies the {\it strong open set condition} (SOSC) if
$$
\bigcup_{v\in V}(J\cap {\rm Int} (X_{v})) \neq \emptyset.
$$
\begin{remark}\label{rmk:SOSC}
    The Hurwitz CGDS $\Phi$ satisfies the SOSC as $J\subseteq X$. 
\end{remark}

\end{definition}

\subsection{Pressure}

The pressure of a CGDMS is given by 
$$
    \cP(t):=\lim_{n\to\infty}\frac{1}{n}Z_n(t):=\lim_{n\to\infty}\frac{1}{n}\sum_{\om\in E_A^n}\||\phi'_{\om}|\|^t_{X_{t(\om)}}
$$
where 
$$
    \|\varphi\|_{X_{t(\om)}}:= \|\varphi\vert_{X_{t(\om)}}\|_{\infty}.
$$

The finiteness parameter $\theta$ of a CGDMS is given by 
$$
    \theta:= \inf\{t\in \RR:\cP(t)<\infty\}.
$$
When $\cP(\theta) = \infty$ we say that the CGDMS is cofinitely regular. By \cite[Proposition 19.4.5]{Urbanski22}, $P(t)<\infty$ if and only if $Z_1(t)<\infty$.

\begin{lemma}\label{lem:cfr}
    The Hurwitz CGDS is cofinitely regular with $\theta = 1$.
\end{lemma}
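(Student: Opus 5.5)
By \cite[Proposition 19.4.5]{Urbanski22}, $\mathcal P(t)<\infty$ if and only if $Z_1(t)<\infty$. So the plan is to reduce everything to estimating the single-letter sum
$$
Z_1(t)=\sum_{e\in E}\||\phi_e'|\|^t_{X_{t(e)}}.
$$
We will show that $Z_1(t)<\infty$ exactly when $t>1$, and that $Z_1(1)=\infty$. This gives $\theta=1$ and $\mathcal P(\theta)=\infty$, which is cofinite regularity.

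First compute the derivatives. For $e=(a,Y)$ we have $\phi_e(z)=\frac{1}{a+z}$, so $|\phi_e'(z)|=|a+z|^{-2}$. Since $z\in Y\subseteq \overline X$ gives $|z|\le \frac{\sqrt2}{2}$, and $|a|\ge \sqrt2$ for every $a\in D$, we get
$$
(|a|+\tfrac{\sqrt2}{2})^{-2}\le \||\phi_e'|\|_{Y}\le (|a|-\tfrac{\sqrt2}{2})^{-2}.
$$
The lower endpoint can be near $0$ when $|a|=\sqrt2$, but this affects only finitely many edges and does not matter for convergence. Indeed, for $|a|\ge 2$ we have $|a|-\frac{\sqrt2}{2}\ge c|a|$ for a uniform $c>0$, so $\||\phi_e'|\|_{Y}\asymp |a|^{-2}$ uniformly over all but finitely many edges.

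Next, count multiplicities. Each $a\in D$ contributes between $1$ and $|\cX|=12$ edges $(a,Y)$, since $\cY(a)$ is nonempty and is a subset of $\cX$. In fact $\cY(a)=\cX$ for all but finitely many $a$, which is why Table~\ref{tab:1} is finite. Therefore
$$
Z_1(t)\asymp \sum_{a\in D}|a|^{-2t},
$$
up to multiplicative constants and finitely many finite terms.

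Finally, apply the lattice-point count. The number of Gaussian integers with $N\le |a|<N+1$ is of order $N$. Hence $\sum_{a\in\mathbb Z[i]\setminus\{0\}}|a|^{-2t}$ converges if and only if $2t-1>1$, that is, $t>1$. At $t=1$ it is comparable to $\sum_N N\cdot N^{-2}=\infty$. So $Z_1(t)<\infty$ for $t>1$ and $Z_1(1)=\infty$, which gives $\theta=1$ and $\mathcal P(1)=\infty$.

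The only mildly delicate point is the uniform comparison $\||\phi_e'|\|_{X_{t(e)}}\asymp|a|^{-2}$. In particular, we need the supremum over $Y$ to be finite for the small values $|a|\in\{\sqrt2,2\}$. This holds because $\phi_a$ maps $Q_a$ into $\overline X$ with $-a\notin Q_a$, so each of these finitely many terms is finite (this also follows from the common contraction ratio in Theorem~\ref{thm:FI}). The tail behaviour, which is what determines convergence, is governed by large $|a|$, where the bound is uniform.
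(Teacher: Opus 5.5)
Your proposal is correct and follows essentially the same route as the paper. Both arguments reduce to $Z_1(t)$ via \cite[Proposition 19.4.5]{Urbanski22} and compare it termwise, with at most $12$ edges per digit, to the lattice sum $\sum_{a}|a|^{-2t}$ (the paper's Dedekind zeta function of $\mathbb{Q}(i)$), which converges exactly for $t>1$ and diverges at $t=1$. The differences are cosmetic: you use explicit bounds $(|a|\pm\frac{\sqrt2}{2})^{-2}$ where the paper takes the limit $|a^2/(a+z)^2|\to 1$, and your bounds already show every term is at most $2$, so your worry about finiteness for small $|a|$ resolves immediately.
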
 

\begin{proof}
    We will show that the first partition function of the Hurwitz CGDS, $Z_1(t)$, is comparable to the Dedekind zeta funciton of $\mathbb{Q}[i]$ given by $$\zeta_{Q[i]}(t)=\sum_{n,m=1}^{\infty}\left(\frac{1}{n^2+m^2}\right)^t,$$
    which for real $t$ converges if $t>1$. Implying that $\theta =1$. We begin with the upper bound. Here we have 

    \begin{align*}
        Z_1(t) := \sum_{e\in E}\||\phi_e'|\|^t_{X_{t(e)}} 
        &= \sum_{(a,Y)\in E}\left\|\left\vert\frac{1}{(z+a)^2}\right\vert\right\|_Y^t \geq \sum_{\substack{(a,Y)\in E\\ |a|\geq \sqrt{8}}} \left\|\left\vert\frac{1}{(z+a)^2}\right\vert\right\|_Y^t\\
        &\geq 12 \sum_{\substack{a\in \ZZ[i]\\ |a|\geq \sqrt{8}}}\inf\left\{\left\|\left\vert\frac{1}{(z+a)^2}\right\vert\right\|_Y^t\;\Bigg|\; Y\in \cX \right\}\\
        &= 12 \sum_{\substack{a\in \ZZ[i]\\ |a|\geq \sqrt{8}}} \left\vert\frac{1}{a^2}\right\vert^t\inf\left\{\left\|\left\vert\frac{a^2}{(z+a)^2}\right\vert\right\|_Y^t\;\Bigg|\; Y\in \cX \right\}.
    \end{align*}
    Now since all $Y \in \cX$ are compact, $\cX$ is a finite set, $|(z+a)|^2>1$, and since $\lim_{a\to\infty}\left\vert\frac{a^2}{(z+a)^2}\right\vert = 1$ we obtain that there is some $K,M>0$ such that for all $a\geq \sqrt 8$,
    $$
        M> K(a) := \inf\left\{\left\|\left\vert\frac{a^2}{(z+a)^2}\right\vert\right\|_Y^t\;\Bigg|\; Y\in \cX \right\} > K >0.
    $$
    And so,
    $$
        Z_1(t) \geq 12K\sum_{\substack{a\in \ZZ[i]\\ |a|\geq \sqrt{8}}}\left(\frac{1}{|a^2|}\right)^t = 12K(\zeta_{\QQ[i]}(t)-L).
    $$
    For some constant $L$. Thus, $\theta\leq 1$.\newline For the lower bound, 
    \begin{align*}
        Z_1(t) = \sum_{e\in E}\||\phi_e'|\|^t_{X_{t(e)}} 
        &= \sum_{(a,Y)\in E}\left\|\left\vert\frac{1}{(z+a)^2}\right\vert\right\|_Y^t = \sum_{\substack{(a,Y)\in E\\ |a|\geq \sqrt{8}}} \left\|\left\vert\frac{1}{(z+a)^2}\right\vert\right\|_Y^t+L_2 \\
        &\leq 12\sum_{\substack{a\in \ZZ[i]\\ |a|\geq \sqrt{8}}}\left(\frac{1}{|a^2|}\right)^t \sup\left\{\left\|\left\vert\frac{a^2}{(z+a)^2}\right\vert\right\|_Y^t\;\Bigg|\; Y\in \cX \right\}+L_2.
    \end{align*}
    By similar reasoning as in the previous case we obtain that there is some $K_1,M_1>0$ such that for all $a\geq \sqrt 8$,
    $$
        M_1> K_1(a) := \sup\left\{\left\|\left\vert\frac{a^2}{(z+a)^2}\right\vert\right\|_Y^t\;\Bigg|\; Y\in \cX \right\} > K_1 >0.
    $$ 
    And so 
    $$
        Z_1(t) \leq 12M_1\sum_{\substack{a\in \ZZ[i]\\ |a|\geq \sqrt{8}}}\left(\frac{1}{|a^2|}\right)^t +L_2 = 12M_1(\zeta_{\QQ[i]}(t)-L)+L_2.
    $$
    For the same constant $L$. Thus, $\theta\geq 1$ and so we conclude $\theta = 1$. Since $\zeta_{\QQ[i]}(\theta) = \infty$, and $Z_1(t)$ is comparable to $\zeta_{\QQ[i]}(t)$, $Z_1(\theta) = \infty$ which implies that $\mathcal P(\theta) = \infty$, and we conclude that the Hurwitz CGDS is cofinitely regular.
\end{proof}

We define the {\it multivariate Lyapunov pressure} by $\cP(t,q):= \cP(t-q)$. With this multivariate pressure we define the {\it Manhattan region} of the Lyapunov pressure by 
$$
    \mathsf{M} := \left\{(t,q)\in \RR^2: \cP(t,q)<\infty\right\}
$$
and call $\partial\mathsf{M}$ the corresponding {\it Manhattan curve}. The next corollary follows directly from Lemma \ref{lem:cfr}.

\begin{corollary}
For the multivariate Lyapunov pressure of the Hurwitz CGDS, $\partial \mathsf{M}$ is given by the line $L:q(t) = t-1$, and $\mathsf{M}$ is the half-plane $\{(t,q)\in \RR^2\;|\;q < t-1\}$. 
\end{corollary}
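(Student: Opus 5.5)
The corollary follows from Lemma \ref{lem:cfr} once we know exactly where the one-variable pressure $\cP$ is finite. So the plan is first to pin down $\{s\in\RR : \cP(s)<\infty\}$, and then to transport this set through the substitution $s=t-q$.

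\textbf{Step 1.} By \cite[Proposition 19.4.5]{Urbanski22}, quoted above, $\cP(s)<\infty$ if and only if $Z_1(s)<\infty$. In the proof of Lemma \ref{lem:cfr}, $Z_1(s)$ is bounded above and below by positive multiples of $\zeta_{\QQ[i]}(s)$, up to additive constants. Since $\zeta_{\QQ[i]}(s)<\infty$ exactly when $s>1$, we get $Z_1(s)<\infty$ for $s>1$ and $Z_1(s)=\infty$ for $s\le 1$.

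The value $s=1$ is the only delicate point, and it is exactly the cofinite regularity statement $\cP(\theta)=\infty$ with $\theta=1$ from Lemma \ref{lem:cfr}. For $s<1$ divergence holds for a simpler reason: the terms $\||\phi_e'|\|^s$ are comparable to $|a|^{-2s}$, which dominates $|a|^{-2}$ for large $|a|$, so $Z_1(s)\ge c\,Z_1(1)=\infty$. Hence
$$\{s\in\RR:\cP(s)<\infty\}=(1,\infty).$$

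\textbf{Step 2.} By definition $\cP(t,q)=\cP(t-q)$. Step 1 then gives
$$(t,q)\in\mathsf M \iff t-q>1 \iff q<t-1,$$
so $\mathsf M=\{(t,q)\in\RR^2 : q<t-1\}$, an open half-plane. Its topological boundary in $\RR^2$ is the line $q=t-1$, which is $L$.

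The main obstacle is the endpoint $s=1$. If it were mishandled, it could change whether $\mathsf M$ is the open or the closed half-plane. It is already settled by the cofinite regularity in Lemma \ref{lem:cfr}, so the argument reduces to these bookkeeping steps.
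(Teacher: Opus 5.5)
Your proposal is correct and takes the same approach as the paper, which says only that the corollary follows directly from Lemma~\ref{lem:cfr}: $\theta=1$ together with cofinite regularity ($\cP(1)=\infty$) gives $\{s:\cP(s)<\infty\}=(1,\infty)$, and the substitution $\cP(t,q)=\cP(t-q)$ then yields $\mathsf{M}=\{q<t-1\}$, whose boundary is the line $q=t-1$. Your write-up simply spells out these bookkeeping steps, including the divergence for $s<1$ and the endpoint $s=1$.
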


\subsection{Symbolic Pressure and Measures}

The geometric potential of a CGDMS is given by 
$$
    \Xi_{\Phi}(\om):= \log |\phi'_{\om_1}(\pi(\sigma(\om)))|,\;\forall\om\in E_A^{\infty}.
$$
Since $\Phi$ is a CGDS, by \cite[Lemma 19.4.1]{Urbanski22}, $\Xi_{\Phi}$ is acceptable. 
For any $\sigma$-invariant probability measure $\mu$ on $E_A^{\infty}$, the $\mu$-characteristic Lyapunov exponent of $\Phi$ is given by
$$
    \lambda(\sigma) := -\int_{E_A^{\infty}}\Xi_{\Phi}d\mu > 0.
$$
At a point $\om\in E_A^{\infty}$ we take 
$$
    \lambda(\om):= -\lim_{n\to\infty}\frac{1}{n}\sum_{j=0}^{n-1}\log|\phi'_{\sigma^j(\om)_1}(\pi(\sigma^{j+1}(\om)))|
$$
if the limit exists.
Take $f_{t,q}:= (t-q)\Xi_{\Phi}$. The pressure of this potential on the symbolic representation is given by 
$$
\cP(f_{t,q}) := \lim_{n\to\infty}\frac{1}{n}\log\sum_{\om\in E_A^n}\exp(\overline{S_{n}}f_{t,q}([\omega]))
$$
where for a set $\Lambda\subseteq E_A^{\infty}$
$$
    \overline{S_{n}}g(\Lambda) := \sup\{S_ng(\om):\om\in \Lambda \}.
$$
Since $\Phi$ is finitely irreducible, $\cP(f_{t,q})=\cP(t,q)$ by \cite[Proposition 19.4.6.(a)]{Urbanski22}. Further, since $\Phi$ is finitely primitive, $(t-q)\Xi_{\Phi}$ has a unique Gibbs state $\mu_{t,q}$ that is totally ergodic by \cite[Theorem 17.4.6]{Urbanski22}.

\begin{lemma}\label{lem:cylEntropy}
    For the Hurwitz CGDS $\Phi$, if $(t,q)\in \mathsf{M}$ we have $H_{\mu_{t,q}}(\mathcal{U}_E)<\infty$ where $\mathcal{U}_E$ is the partition of $E_A^{\infty}$ into its initial cylinders of length $1$. 
\end{lemma}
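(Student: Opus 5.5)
We need $H_{\mu_{t,q}}(\mathcal U_E)=-\sum_{e\in E}\mu_{t,q}([e])\log\mu_{t,q}([e])<\infty$ whenever $q<t-1$, i.e.\ whenever $s:=t-q>1=\theta$. The plan is to use the Gibbs property of $\mu_{t,q}$ for the potential $s\,\Xi_\Phi$, together with the explicit form of the derivatives $|\phi_e'(z)|=|z+a|^{-2}$ for $e=(a,Y)$. This reduces the entropy sum to a sum comparable to a Dedekind-zeta-type series, as in the proof of Lemma \ref{lem:cfr}.

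\textbf{Step 1 (Gibbs estimate on one-cylinders).} Since $\mu_{t,q}$ is the Gibbs state of $s\Xi_\Phi$ (by \cite[Theorem 17.4.6]{Urbanski22}) and $\cP(s\Xi_\Phi)=\cP(s)<\infty$, there is $C\ge1$ such that for every $e\in E$
$$
C^{-1}\exp\bigl(s\,\Xi_\Phi(\om)-\cP(s)\bigr)\le \mu_{t,q}([e])\le C\exp\bigl(s\,\Xi_\Phi(\om)-\cP(s)\bigr),\qquad \om\in[e].
$$
By bounded distortion (Lemma \ref{lem:diam1} and its source, or directly from the estimate in Lemma \ref{lem:cfr}) we have $\exp(\Xi_\Phi(\om))\asymp \||\phi_e'|\|_{X_{t(e)}}\asymp |a|^{-2}$ for $e=(a,Y)$ with $|a|\ge\sqrt8$. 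Here we use that $|a^2/(z+a)^2|$ is bounded above and below uniformly in $z\in \overline X$. Hence $\mu_{t,q}([e])\asymp |a|^{-2s}$, with constants independent of $e$; the finitely many edges with $|a|<\sqrt8$ contribute only a finite amount to the entropy.

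\textbf{Step 2 (summation).} Write $-x\log x$ with $x=\mu_{t,q}([e])$. From Step 1 we get $-\log\mu_{t,q}([e])\le c_1+2s\log|a|$, so
$$
H_{\mu_{t,q}}(\mathcal U_E)\le c_2+\sum_{e\in E}\mu_{t,q}([e])\,(c_1+2s\log|a|)\le c_3+c_4\sum_{a\in\ZZ[i],\,|a|\ge\sqrt8}\frac{\log|a|}{|a|^{2s}}.
$$
Here we use that each $a$ gives at most $12$ edges $(a,Y)$, and that $\sum\mu_{t,q}([e])=1$ absorbs the constant term. The remaining lattice sum converges for $s>1$: compare it with $\int_{|w|\ge1}\log|w|\,|w|^{-2s}\,dA(w)=2\pi\int_1^\infty r^{1-2s}\log r\,dr<\infty$, or note that $\log|a|\le C_\epsilon|a|^{\epsilon}$ and use $\zeta_{\QQ[i]}(s-\epsilon/2)<\infty$ with $\epsilon$ small enough that $s-\epsilon/2>1$.

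\textbf{Main obstacle.} The only delicate point is justifying the uniform two-sided Gibbs comparison $\mu_{t,q}([e])\asymp\||\phi_e'|\|^s_{X_{t(e)}}e^{-\cP(s)}$ in the graph-directed, infinite-alphabet setting. We first try to quote the Gibbs inequality from \cite[Chapter 17]{Urbanski22} for acceptable potentials under finite primitivity, with $\Xi_\Phi$ acceptable by \cite[Lemma 19.4.1]{Urbanski22}. Acceptability gives exactly the oscillation bound needed to replace $\Xi_\Phi(\om)$ by its supremum on $[e]$. Note that only the lower bound on $\mu$ (to control $-\log\mu$) and the normalization $\sum\mu=1$ are actually needed, so even a one-sided Gibbs inequality suffices.
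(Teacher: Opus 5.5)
Your proof is correct and has the same skeleton as the paper's. The Gibbs property of $\mu_{t,q}$ on one-cylinders bounds $-\log\mu_{t,q}([e])$ by a constant plus $-(t-q)\Xi_\Phi(\rho_e)$, and one then sums against $\mu_{t,q}([e])$.

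You differ in the final summation, and there your version is the one that actually closes the argument. After the Gibbs bound, the term to control is $(t-q)\sum_{e\in E}\mu_{t,q}([e])\log\bigl(1/|\phi_e'(\pi(\sigma(\rho_e)))|\bigr)$. Its finiteness is essentially the finiteness of the Lyapunov exponent recorded afterwards in Corollary \ref{cor:int}. The paper's displayed computation instead writes $-\sum_{e}\mu_{t,q}([e])|\phi_e'(\pi(\sigma(\rho_e)))|^{t-q}$ and appeals only to $Z_1(t-q)<\infty$, which does not control a logarithmically weighted sum.

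Your Step 2 handles this correctly:
\begin{itemize}
\item the \emph{upper} Gibbs bound gives $\mu_{t,q}([e])\lesssim|a|^{-2s}$ with $s=t-q$;
\item at most $12$ edges share a given $a$;
\item $\sum_{|a|\ge\sqrt8}\log|a|\,|a|^{-2s}<\infty$, because the strict inequality $s>1$ leaves room to absorb the logarithm.
\end{itemize}
For a general cofinitely regular system the same step reads $x^{s}\log(1/x)\le C_\epsilon x^{s-\epsilon}$ together with $Z_1(s-\epsilon)<\infty$ for small $\epsilon>0$.

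For exactly this reason, your closing remark is wrong. The lower Gibbs bound together with $\sum_e\mu_{t,q}([e])=1$ does not suffice. A lower bound $\mu([e])\ge c\,p_e$ with $H(p)<\infty$ is compatible with infinite entropy: take $\mu=\tfrac12p+\tfrac12\nu$ with $H(\nu)=\infty$ and use concavity of entropy. Step 2 does use the upper bound, so the proof itself is unaffected; just drop that remark.
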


\begin{proof}
Let $\rho_e\in[e]$ and recall from the Gibbs property that,
    $$
        \mu_{t,q}([e])\asymp\exp((t-q)\Xi_{\Phi}(\rho_e)-\mathcal P(t,q)).
    $$
    Now we estimate $H_{\mu_{t,q}}(\mathcal{U}_E)$ using the Gibbs property.
    \begin{align*}
        H_{\mu_{t,q}}(\mathcal{U}_E) &= \sum_{e\in E}-\mu_{t,q}([e])\log\mu_{t,q}([e])\\
        &< \sum_{e\in E}-\mu_{t,q}([e])\log(C(\exp((t-q)\Xi_{\Phi}(\rho_e)-\mathcal P(t,q))))\\
        &=\mathcal P(t,q)+\log C-\sum_{e\in E} \mu_{t,q}([e])|\phi_{e}'(\pi(\sigma(\rho_e)))|^{t-q}.
    \end{align*}
    Since $(t,q)\in \mathsf{M}$, we have $\mathcal P(t,q)<\infty$. Thus, $Z_1(t-q)<\infty$ by \cite[Proposition 19.4.5]{Urbanski22}. Moreover,
    $$
    \sum_{e\in E}|\phi_{e}'(\pi(\sigma(\rho_e)))|^{t-q}<Z_1(t-q)<\infty.
    $$
    And so, since $\sum_{e\in E}\mu_{t,q}([e])$ converges absolutely, we have 
    $$
    \sum_{e\in E}\mu_{t,q}([e])|\phi_{e}'(\pi(\sigma(\rho_e)))|^{t-q}<\infty.
    $$
    Hence, $H_{\mu_{t,q}}(\mathcal{U}_E)<\infty$.
    
\end{proof}

By \cite[Theorem 17.4.7 and Theorem 17.5.1]{Urbanski22} we have the following corollary.

\begin{corollary}\label{cor:int}
    For the Hurwitz CGDS $\Phi$ and $(t,q)\in \mathsf{M}$, we have
    $$
        \int_{E_A^{\infty}}\Xi_{\Phi}d\mu_{t,q}>-\infty.
    $$
    Consequently $\mu_{t,q}$ is the unique equilibrium state for the potential $(t-q)\Xi_{\Phi}$.
\end{corollary}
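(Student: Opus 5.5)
The plan is to derive both conclusions of Corollary \ref{cor:int} from Lemma \ref{lem:cylEntropy} through the general results \cite[Theorem 17.4.7 and Theorem 17.5.1]{Urbanski22}. Throughout, fix $(t,q)\in\mathsf{M}$, so that $\mathcal P(t,q)<\infty$ and $t-q>1$ by the description of $\mathsf{M}$ as the half-plane $\{q<t-1\}$.

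\textbf{Step 1: finiteness of the integral.} Write the integral cylinder by cylinder:
$$\int_{E_A^\infty}\Xi_\Phi\,d\mu_{t,q}=\sum_{e\in E}\int_{[e]}\log|\phi_e'(\pi(\sigma\om))|\,d\mu_{t,q}(\om).$$
By bounded distortion (Lemma \ref{lem:diam1} and the acceptability of $\Xi_\Phi$), on each cylinder $[e]$ the integrand lies within a uniform additive constant of $\log\||\phi_e'|\|_{X_{t(e)}}$. So it suffices to prove
$$\sum_{e\in E}\mu_{t,q}([e])\,\bigl|\log\||\phi_e'|\|_{X_{t(e)}}\bigr|<\infty.$$
By the Gibbs property,
$$-\log\mu_{t,q}([e])=-(t-q)\log\||\phi_e'|\|_{X_{t(e)}}+\mathcal P(t,q)+O(1).$$
Hence $|\log\||\phi_e'|\|_{X_{t(e)}}|$ is bounded by $\frac{1}{t-q}\bigl(-\log\mu_{t,q}([e])\bigr)$ plus a constant. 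The sum above is therefore at most $\frac{1}{t-q}H_{\mu_{t,q}}(\mathcal U_E)$ plus a constant, which is finite by Lemma \ref{lem:cylEntropy}. This is exactly the criterion of \cite[Theorem 17.4.7]{Urbanski22}: a finite-entropy partition together with the Gibbs property gives integrability of the potential.

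A direct check is also available. Since $|\phi_{(a,Y)}'|\asymp|a|^{-2}$ for large $|a|$, the sum is comparable to $\sum_a|a|^{-2(t-q)}\log|a|$. This converges for $t-q>1$ by comparison with the Dedekind zeta function, as in Lemma \ref{lem:cfr}.

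\textbf{Step 2: equilibrium state.} Apply the variational principle for summable acceptable potentials \cite[Theorem 17.5.1]{Urbanski22}. The Gibbs state $\mu_{t,q}$ from \cite[Theorem 17.4.6]{Urbanski22} exists because the system is finitely primitive. If the integral of the potential against it is finite, then $\mu_{t,q}$ is an equilibrium state, and it is the unique one. Uniqueness comes from the standard argument: any equilibrium state of a finitely primitive system with a summable Hölder potential coincides with the Gibbs state. Step 1 supplies exactly this finiteness, with the potential taken to be $(t-q)\Xi_\Phi$.

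\textbf{Main obstacle.} The only delicate point is the comparison in Step 1 between the pointwise values of $\Xi_\Phi$ on $[e]$ and the sup-norm $\||\phi_e'|\|$. This needs a distortion constant that is uniform over the infinite alphabet. The Hölder condition (4) of Definition \ref{def:con}, with the common constant $L$ from Theorem \ref{thm:FI}, provides it. Once that uniform comparison is in place, the result follows by citation.
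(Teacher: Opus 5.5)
Your proposal is correct and follows the paper's route. The paper obtains the corollary directly from Lemma \ref{lem:cylEntropy} together with \cite[Theorems 17.4.7 and 17.5.1]{Urbanski22}, and your Step 1 simply unpacks the entropy-to-integrability implication via the Gibbs property. Keep your direct check $\sum_a |a|^{-2(t-q)}\log|a|<\infty$ for $t-q>1$: it gives integrability without the entropy lemma. That matters because the paper's written proof of that lemma replaces $-(t-q)\sum_e\mu_{t,q}([e])\log|\phi_e'(\pi(\sigma(\rho_e)))|$ by $-\sum_e\mu_{t,q}([e])\,|\phi_e'(\pi(\sigma(\rho_e)))|^{t-q}$, and so it implicitly needs exactly this estimate.
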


\section{Complex Diophantine approximation}
In this section we relate the Lyapunov exponent of the Hurwitz conformal graph directed system to complex Diophantine approximation and prove Theorem \ref{thm:main-1}. In Section 3.1, 
we recall the Hurwitz complex continued fraction expansion and its basic properties. In Sections 3.2 and 3.3, we show that the exponential decay rate of the approximation error $\left|z-\frac{p_n}{q_n}\right|$ is encoded dynamically by the Lyapunov exponent of the Hurwitz conformal graph directed system. 
This allows us to apply the multifractal analysis developed in \cite{NDDis} to obtain the multifractal spectrum for complex Diophantine approximation.

\subsection{Complex continued fraction}
Any complex number $z \in \mathbb{C}$ can be written as the continued fraction
$$
z = a_0 + \cfrac{1}{a_1 +
      \cfrac{1}{a_2 +
      \cfrac{1}{a_3 + \dots}}} \quad,
$$
where $a_0,a_1,a_2,\dots \in \mathbb{Z}[i].$

For $n \ge 0$, define the sequence $\{p_n\}$ and $\{q_n\}$ by the following recurrence relations:
$$
\begin{cases}
p_{-1}:=1,\quad p_0:=a_0, \quad p_{n+1}:=a_{n+1}p_n+p_{n-1},\\
q_{-1}:=0,\quad q_0:=1, \quad q_{n+1}:=a_{n+1}q_n+q_{n-1}.
\end{cases}
$$

The following lemma is a standard result in the study of continued fraction algorithms. We omit the proof and refer the interested reader to \cite[Theorem 6.1]{DaniNogueira14} for more details.
\begin{lemma}
For $n \ge 0$, we have
$$\frac{p_n}{q_n} := a_0 + \cfrac{1}{a_1 +
      \cfrac{1}{a_2 + 
      \cfrac{1}{\dots+ \cfrac{1}{a_n}}}} \in \mathbb{Q}(i).$$
Moreover, we have $\frac{p_n}{q_n} \to z.$
\end{lemma}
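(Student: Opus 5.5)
We argue by induction on $n$, treating the two assertions in turn; the paper itself defers this to \cite[Theorem 6.1]{DaniNogueira14}. For the first assertion, the natural device is the Möbius (matrix) formalism. Set
$$
M_n:=\begin{pmatrix} a_0 & 1\\ 1 & 0\end{pmatrix}\begin{pmatrix} a_1 & 1\\ 1 & 0\end{pmatrix}\cdots\begin{pmatrix} a_n & 1\\ 1 & 0\end{pmatrix}.
$$

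\emph{Step 1 (matrix identity).} We check by induction on $n$ that
$$
M_n=\begin{pmatrix} p_n & p_{n-1}\\ q_n & q_{n-1}\end{pmatrix}.
$$
This is immediate from the recurrences and the initial values $p_{-1}=1$, $q_{-1}=0$, $p_0=a_0$, $q_0=1$.

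\emph{Step 2 (convergent formula).} Interpret $M_n$ as a Möbius map. Each factor acts by $w\mapsto a_k+1/w$, so $M_n(w)=[a_0;a_1,\dots,a_n,w]$. Letting $w\to\infty$ gives $M_n(\infty)=p_n/q_n$, which equals the finite continued fraction $[a_0;a_1,\dots,a_n]$. Since $p_n,q_n\in\mathbb Z[i]$, this ratio lies in $\mathbb Q(i)$ provided $q_n\neq 0$, which is settled in Step 3.

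\emph{Step 3 (growth of $q_n$).} For the Hurwitz expansion the digits satisfy $a_k\in D$, hence $|a_k|\ge\sqrt2$ for $k\ge1$. We show inductively that $|q_{n+1}|>|q_n|$. Writing $r_n:=q_{n-1}/q_n$, the recurrence gives $q_{n+1}/q_n=a_{n+1}+r_n$. Moreover $1/r_{n}=a_n+r_{n-1}$, and $r_n$ equals the backward continued fraction $[0;a_n,\dots,a_1]$. The main obstacle lies here: unlike the real case, $|a|\ge\sqrt2$ does not trivially force $|a+r|>1$. One must invoke the known Hurwitz fact that $|r_n|<1$ and that the pairs $(a_{n+1},r_n)$ are constrained by the admissibility structure. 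Concretely, for $|a|=\sqrt2$ the following digit cannot push $T^n z$ into the corner region where cancellation occurs; this is precisely the restriction encoded in the prototype sets $Q_a$ of the CGDS. I would import this from \cite{DaniNogueira14}, namely that $|q_n|$ is strictly increasing and grows at least geometrically, say $|q_n|\ge c\,\rho^n$ with $\rho>1$.

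\emph{Step 4 (convergence).} Use $T^{n}$ itself: $z=M_{n}(1/T^{n}(z-a_0))$, i.e.
$$
z=\frac{p_n w_n+p_{n-1}}{q_n w_n+q_{n-1}},\qquad w_n:=a_{n+1}+T^{n+1}(z-a_0).
$$
Since $\det M_n=\pm1$, we obtain
$$
\left|z-\frac{p_n}{q_n}\right|=\frac{1}{|q_n|^2\,|w_n+r_n|}.
$$
The point $T^{n+1}(z-a_0)$ lies in $X$, and $|a_{n+1}|\ge\sqrt2$. Step 3 bounds $|w_n+r_n|$ below by a positive constant, again the same Hurwitz fact. Together with $|q_n|\to\infty$, this gives $p_n/q_n\to z$. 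If the expansion terminates, i.e.\ $z\in\mathbb Q(i)$, then equality holds from some stage on.
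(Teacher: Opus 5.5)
Your proposal is consistent with the paper, which gives no argument of its own and simply defers the lemma to \cite[Theorem 6.1]{DaniNogueira14}. Your Steps 1, 2 and 4 are the routine continuant/M\"obius skeleton: your error formula $|z-p_n/q_n|=|q_n|^{-2}|z_{n+1}+q_{n-1}/q_n|^{-1}$ (with $w_n=z_{n+1}$) and the bound $|z_{n+1}+q_{n-1}/q_n|\ge 1-1/\sqrt2$ are exactly what the paper itself uses in the proof of Lemma~\ref{lem:number-theory}. Like the paper, you import the one nontrivial input, strict growth of $|q_n|$, from that same source.

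Be aware, though, that your heuristic for Step 3 is not a proof. Excluding $|a_{n+1}+q_{n-1}/q_n|\le 1$ requires an invariant-region argument for the whole backward word $[0;a_n,\dots,a_1]$, not a one-step prototype-set restriction. For example, a long run of alternating digits $2+2i,\,-2+2i$ ending in $a_n=-2+2i$, followed by the admissible digit $a_{n+1}=1+i$, pushes $|a_{n+1}+q_{n-1}/q_n|$ arbitrarily close to $1$. That is precisely why this step has to be cited.
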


Define also the $(n)$-th complete quotient $z_n$ as
$$z_0 := z, \text{ and } z_{n} := a_n+\frac{1}{z_{n+1}}.$$

\begin{lemma}\label{lem:number-theory}
Let $z \in \mathbb{C}$ have a Hurwitz complex continued fraction expansion, with convergents $p_n/q_n$. Assume that the limit
$
\ell := \lim_{n\to\infty} \frac{1}{n}\log |q_n|
$
exists and is finite. Then
$$
\lim_{n\to\infty} -\frac{1}{n}\log\left|z-\frac{p_n}{q_n}\right| = 2\ell.
$$
\end{lemma}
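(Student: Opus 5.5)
The plan is to use the standard exact error formula for continued fractions in terms of the complete quotients. Then I will show that the correction factor it introduces grows subexponentially. First I would verify by induction on the recurrences the identity
$$
z=\frac{p_n z_{n+1}+p_{n-1}}{q_n z_{n+1}+q_{n-1}},
$$
valid for every $n\ge 0$ (with the complete quotients $z_n$ as defined above). Combining this with the determinant relation $p_nq_{n-1}-p_{n-1}q_n=(-1)^{n+1}$, which also follows by induction from the recurrences, gives
$$
\left|z-\frac{p_n}{q_n}\right|=\frac{1}{|q_n|^2\,\bigl|z_{n+1}+q_{n-1}/q_n\bigr|}.
$$
Taking $-\frac1n\log$ of both sides, we get $2\cdot\frac1n\log|q_n|\to 2\ell$ plus the term $\frac1n\log|z_{n+1}+q_{n-1}/q_n|$. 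It therefore suffices to show that this last term tends to $0$.

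For the upper bound on the correction, I would use standard Hurwitz facts. The digits satisfy $|a_k|\ge\sqrt2$, since $a_k\in D$, and $|z_{k+1}-a_{k+1}|=|T^k(\cdot)|\le 1/\sqrt2$. The sequence $|q_k|$ is strictly increasing, so $|q_{n-1}/q_n|<1$; I would cite \cite{DaniNogueira14} for this or prove it from the recurrence. Hence
$$
|z_{n+1}+q_{n-1}/q_n|\le |a_{n+1}|+2.
$$
I would also use $|q_{n+1}|\ge |a_{n+1}||q_n|-|q_{n-1}|\ge (|a_{n+1}|-1)|q_n|$. This gives $\log(|a_{n+1}|-1)\le \log|q_{n+1}|-\log|q_n|$, and the right-hand side is $o(n)$ because $\frac1n\log|q_n|\to\ell$ is finite. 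Since $|a_{n+1}|\ge\sqrt2$, this controls $\log(|a_{n+1}|+2)=o(n)$ as well, taking care with small digits, where $|a|-1$ may be below $1$ but everything is bounded.

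For the lower bound, I need $|z_{n+1}+q_{n-1}/q_n|$ bounded below by a positive constant. The natural route is that $|z_{n+1}|\ge\sqrt2$, because $1/z_{n+1}\in X$, while $|q_{n-1}/q_n|\le c<1$ uniformly. A uniform $c<1$ for Hurwitz continued fractions is known (Hurwitz; see \cite{DaniNogueira14}), and with it the factor is at least $\sqrt2-c>0$. This uniform lower bound is the main obstacle. The crude bound $|q_{n-1}/q_n|<1$ only yields $\sqrt2-1>0$, which happily already suffices, provided the monotonicity $|q_{n}|>|q_{n-1}|$ is established. So the key step is proving or citing strict monotonicity of $|q_n|$ for Hurwitz expansions. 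Once both bounds hold, the correction term divided by $n$ tends to $0$, and the lemma follows.
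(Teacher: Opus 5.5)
Your proposal is correct and follows essentially the same route as the paper. Both use the exact error formula $|z-p_n/q_n| = |q_n|^{-2}|z_{n+1}+q_{n-1}/q_n|^{-1}$ and then show the correction factor is subexponential via $|z_{n+1}|\ge\sqrt2$ and the strict monotonicity of $|q_n|$; the only cosmetic difference is that the paper rewrites the factor as $q_{n+1}/q_n+1/z_{n+2}$ to get both bounds ($1-1/\sqrt2$ below, $2|q_{n+1}/q_n|$ above) at once, whereas you bound it directly below by $\sqrt2-1$ and pass through the digit $a_{n+1}$ and $|q_{n+1}|\ge(|a_{n+1}|-1)|q_n|$ for the upper bound.
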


\begin{proof}
For Hurwitz convergents we have the exact error formula (see \cite[Proof of Proposition 3.3]{DaniNogueira14}) 
$$
\left|z-\frac{p_n}{q_n}\right|
=
\frac{1}{|q_n|^2\left|z_{n+1}+\frac{q_{n-1}}{q_n}\right|},
$$
where $z_{n+1}$ is the $(n+1)$-st complete quotient. 
Hence
$$
-\frac{1}{n}\log\left|z-\frac{p_n}{q_n}\right|
=
2\frac{\log|q_n|}{n}
+
\frac{1}{n}\log\left|z_{n+1}+\frac{q_{n-1}}{q_n}\right|.
$$
Thus it suffices to prove that
$
\frac{1}{n}\log\left|z_{n+1}+\frac{q_{n-1}}{q_n}\right| \to 0.
$ Using
$z_{n+1}=a_{n+1}+\frac{1}{z_{n+2}}$ and $q_{n+1}=a_{n+1}q_n+q_{n-1}$, we obtain
$$
z_{n+1}+\frac{q_{n-1}}{q_n}
=
\frac{q_{n+1}}{q_n}+\frac{1}{z_{n+2}}.
$$

In the Hurwitz algorithm, since the remainder $z_n-f(z_n)$ lies in the fundamental square $X$, we have
$|z_n-f(z_n)| \le \frac{1}{\sqrt{2}}$ and thus $|z_{n+1}| \ge \sqrt{2}$.
Also, for Hurwitz convergents we have $|q_{n+1}|>|q_n|$. 
Using
$
z_{n+1}+\frac{q_{n-1}}{q_n}
=
\frac{q_{n+1}}{q_n}+\frac{1}{z_{n+2}},
$
the reverse triangle inequality gives
$$
\left|z_{n+1}+\frac{q_{n-1}}{q_n}\right|
\ge
\left|\frac{q_{n+1}}{q_n}\right|-\left|\frac{1}{z_{n+2}}\right|
\ge
1-\frac{1}{\sqrt{2}},
$$
while the triangle inequality gives
$$
\left|z_{n+1}+\frac{q_{n-1}}{q_n}\right|
\le
\left|\frac{q_{n+1}}{q_n}\right|+\frac{1}{\sqrt{2}}
\le
2\left|\frac{q_{n+1}}{q_n}\right|.
$$
Therefore,
$
\frac{1}{n}\log\left(1-\frac{1}{\sqrt{2}}\right)
\le
\frac{1}{n}\log\left|z_{n+1}+\frac{q_{n-1}}{q_n}\right|
\le
\frac{\log 2}{n}+\frac{\log|q_{n+1}|-\log|q_n|}{n}.
$
The left-hand side tends to $0$. Since $\frac{1}{n}\log|q_n| \to \ell$,
we also have $\frac{1}{n}\log|q_{n+1}| \to \ell,$
and therefore
$
\frac{\log|q_{n+1}|-\log|q_n|}{n}\to 0.
$
By the squeeze theorem, we have
$$
\frac{1}{n}\log\left|z_{n+1}+\frac{q_{n-1}}{q_n}\right| \to 0.
$$
This completes the proof.
\end{proof}

\subsection{Continued fractions and diameters of cylinder sets in GDMS}
\begin{definition}
For a finite block $a_1\dots a_n\in E_A^n$, the {\it cylinder of depth $n$ associated to this block} is 
$
[a_1,\dots,a_n]
:=
\{\om\in E_A^{\infty}:\ \om_1=a_1,\dots,\om_n=a_n\}.
$
In other words, $\pi([a_1\dots a_n])$ is the set of all points in $X$ who have a Hurwitz coding whose first $n$ digits are $a_{1_1},\dots,a_{n_1}$.
\end{definition}

\begin{lemma}
Let $a_1\dots a_n\in E_A^n$, and let $p_k,q_k$ be the corresponding convergents.

Define the Möbius map
$$
M_n(w):=\frac{p_n w+p_{n-1}}{q_n w+q_{n-1}}.
$$
Then, up to boundary points,
$\pi([a_1\dots a_n])=M_n(X_{t(a_{n})})$.
\end{lemma}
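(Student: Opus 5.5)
We argue in two steps: first identify the composed inverse branch $\phi_{a_1}\circ\cdots\circ\phi_{a_n}$ with the Möbius map $M_n$ (with $a_0=0$), and then use the definition of the coding map $\pi$ to match cylinders with images of prototype sets.

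\textbf{Step 1: the matrix identity.} Write $e_k=(b_k,Y_k)$ for the letters $a_k$ of the block, so that $b_k\in D$ is the Gaussian integer digit. Then $\phi_{e_k}(w)=\frac{1}{b_k+w}$ on $X_{t(e_k)}=Y_k$. We prove by induction on $n$ that
$$
\phi_{e_1}\circ\cdots\circ\phi_{e_n}(w)=\frac{p_n w+p_{n-1}}{q_n w+q_{n-1}}=M_n(w),
$$
where the $p_k,q_k$ are built from the recurrence with $a_0=0$ and partial quotients $b_1,\dots,b_n$. Encode each map by a matrix: $\phi_{b}$ corresponds to $\begin{pmatrix}0&1\\1&b\end{pmatrix}$. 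The recurrence $p_{k+1}=b_{k+1}p_k+p_{k-1}$, $q_{k+1}=b_{k+1}q_k+q_{k-1}$ is then exactly the statement
$$
\begin{pmatrix}p_{k}&p_{k-1}\\q_{k}&q_{k-1}\end{pmatrix}\begin{pmatrix}0&1\\1&b_{k+1}\end{pmatrix}=\begin{pmatrix}p_{k}&p_{k+1}\\q_{k}&q_{k+1}\end{pmatrix}.
$$
Up to the ordering of the columns this is $\begin{pmatrix}p_{k+1}&p_k\\q_{k+1}&q_k\end{pmatrix}$, so we should multiply by $\begin{pmatrix}b_{k+1}&1\\1&0\end{pmatrix}$ instead. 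Correspondingly, write $\phi_b(w)=\frac{1}{b+w}=\frac{0\cdot w+1}{1\cdot w+b}$. For the base case, $M_1(w)=\frac{p_1w+p_0}{q_1w+q_0}=\frac{w+0}{b_1w+1}$ with $p_0=0$, $p_1=1$, $q_1=b_1$. This equals $\frac{1}{b_1+1/w}$, so $M_n$ is really $\phi_{b_1}\circ\cdots\circ\phi_{b_n}$ evaluated at $1/w$, or equivalently at $w$ with the standard convention $M_n(w)=\frac{p_n+p_{n-1}w}{q_n+q_{n-1}w}$.

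So the first thing I would do is fix the convention. Either $M_n(X_{t})$ should be read with the inversion $w\mapsto 1/w$ absorbed, or one checks that the statement intends the tail variable to be the complete quotient $z_{n+1}\in 1/X$. Indeed, with $z=M_n(z_{n+1})$ in the paper's notation, $z=\frac{p_nz_{n+1}+p_{n-1}}{q_nz_{n+1}+q_{n-1}}$ is the standard identity, where $z_{n+1}=1/T^n z$ ranges over $1/X_{t(a_n)}$. I would adopt the version consistent with the paper's complete-quotient convention and prove it by the matrix induction, verifying the inductive step by a single matrix multiplication.

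\textbf{Step 2: cylinders.} By definition of $\pi$, if $\omega\in[a_1\dots a_n]$ then
$$
\pi(\omega)=\phi_{a_1\dots a_n}\bigl(\pi(\sigma^n\omega)\bigr),\qquad \pi(\sigma^n\omega)\in J\cap X_{t(a_n)}.
$$
Admissibility only constrains $\omega_{n+1}$ to satisfy $i(\omega_{n+1})=t(a_n)$. Hence $\pi([a_1\dots a_n])=\phi_{a_1\dots a_n}\bigl(\pi(\{\tau\in E_A^\infty: i(\tau_1)=t(a_n)\})\bigr)$. It remains to show that the union of the first-level images $\phi_e(X_{t(e)})$ over $e$ with $i(e)=t(a_n)$ covers $X_{t(a_n)}$ up to boundary points. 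This holds because the prototype-set construction and the lemma on unique targets $W$ say precisely that each point of $\operatorname{Int}X_v$ lies in the image of some branch landing in $v$. Combining this with $J=X\setminus\mathbb{Q}(i)$ and the open set condition gives equality with $\phi_{a_1\dots a_n}(X_{t(a_n)})$ modulo boundaries and Gaussian rationals, which are boundary points of cylinders.

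\textbf{Expected obstacle.} I expect the main difficulty to be the bookkeeping of the Markov partition in Step 2: the images of $v$-targeted branches must tile $X_v$ up to boundaries. This relies on the case tables (Propositions \ref{edges1}–\ref{edges6}) rather than on a clean formula, so I would simply cite the construction of $\cX$ for it. The Möbius identity in Step 1, by contrast, is routine once the convention is fixed.
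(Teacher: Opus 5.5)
Your argument is correct, and structurally it is the paper's argument. Both identify the cylinder with the image of $X_{t(a_n)}$ under the composed inverse branch, and both write that composition as a M\"obius map in the convergents. The one substantive difference is the convention check, and there you are right and the paper is not. The paper's proof claims that unwinding $z=\phi_{b_1}\circ\cdots\circ\phi_{b_n}(w)$ with $w=T^n(z)$ gives $z=M_n(w)$. It actually gives $z=\frac{p_n+p_{n-1}w}{q_n+q_{n-1}w}=M_n(1/w)$: the remainder $T^n z$ has been confused with the complete quotient $z_{n+1}=1/T^n z$.

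Consequently the lemma as printed is false. Take $n=1$ and $e=(3,Y)$ with $-1/3\in Y$; such an edge exists, since every $Y\in\cX$ is allowed for the digit $3$ by Proposition~\ref{edges1}. Then $M_1(w)=w/(3w+1)$ has a pole at $w=-1/3$, so $M_1(X_{t(e)})$ is unbounded while $\pi([e])\subset\overline X$. So do not hedge between readings. State and prove $\pi([a_1\dots a_n])=\widetilde M_n(X_{t(a_n)})$ up to boundary points, with $\widetilde M_n(w)=\frac{p_n+p_{n-1}w}{q_n+q_{n-1}w}$ (equivalently $M_n(1/X_{t(a_n)})$). This corrected form is also what the next lemma needs. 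On $\overline X$ one has $|q_n+q_{n-1}w|\ge(1-1/\sqrt2)|q_n|$ because $|q_{n-1}|<|q_n|$, whereas the paper's bound $|w+q_{n-1}/q_n|\ge c$ fails at $w=-1/3$ for the edge $e$ above.

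Two smaller points. First, your first matrix identity has the columns of the left factor swapped. The correct induction is
\[
\begin{pmatrix}p_{k-1}&p_k\\q_{k-1}&q_k\end{pmatrix}\begin{pmatrix}0&1\\1&b_{k+1}\end{pmatrix}=\begin{pmatrix}p_k&p_{k+1}\\q_k&q_{k+1}\end{pmatrix},\qquad \begin{pmatrix}p_0&p_1\\q_0&q_1\end{pmatrix}=\begin{pmatrix}0&1\\1&b_1\end{pmatrix},
\]
which gives $\phi_{b_1}\circ\cdots\circ\phi_{b_n}(w)=\frac{p_{n-1}w+p_n}{q_{n-1}w+q_n}$ directly, with no reordering.

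Second, the obstacle you expect in Step 2 does not arise. The paper asserts $J=X\setminus\QQ(i)$, and the $X_v$ are regular closed with pairwise disjoint interiors. So every $x\in{\rm Int}(X_v)\setminus\QQ(i)$ equals $\pi(\tau)$ for some $\tau$, and $\pi(\tau)\in X_{i(\tau_1)}$ forces $i(\tau_1)=v$. No case-by-case tiling check is needed, and the exceptional set is contained in $\phi_{a_1\dots a_n}(\partial X_{t(a_n)})\cup\QQ(i)$. Your derivation of the easy inclusion from $\pi(\om)=\phi_{\om|_n}(\pi(\sigma^n\om))$ is, if anything, cleaner than the paper's route through $T^n$ and Hurwitz digits.
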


\begin{proof}
Fix $z\in \pi([a_1\dots a_n])$, and let $w:=T^n(z)\in X.$
Since $z$ has a coding where the first $n$ digits are $a_{1_1},\dots,a_{n_1}$, a Hurwitz continued fraction expansion of $z$ may be written as
$$
z=\cfrac{1}{a_{1_1}+\cfrac{1}{a_{2_1}+\cfrac{1}{\ddots+\cfrac{1}{a_{n_1}+w}}}}.
$$
Unwinding this expression using the recurrence relations for $p_k,q_k$ gives
$$
z=\frac{p_n w+p_{n-1}}{q_n w+q_{n-1}}=M_n(w).
$$
Thus every $z\in \pi([a_1\dots a_n])$ belongs to $M_n(X_{t(a_{n})})$.

Conversely, if $w\in X_{t(a_{n})}$, then $z:=M_n(w)$
has a continued fraction expansion beginning with the digits $a_{1_1},\dots,a_{n_1}$, with a tail given by a coding of $w$. Note that if this coding is not unique then $w\in \partial X_{t(a_{n})}$. Hence, $z\in \pi([a_1,\dots,a_n])$.

The conclusion follows.
\end{proof}

\begin{lemma}\label{lem:diameter}
With the notation above, we have
$$
\operatorname{diam}(\pi([a_1\dots a_n]))\asymp |q_n|^{-2},
$$
where the implied constants are independent of $n$.
\end{lemma}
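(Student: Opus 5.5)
We use the Möbius description $\pi([a_1\dots a_n])=M_n(X_{t(a_n)})$ from the previous lemma together with the derivative formula for $M_n$. Because $p_nq_{n-1}-p_{n-1}q_n=\pm1$, which follows by induction from the recurrences, we have
\[
M_n'(w)=\frac{\pm1}{(q_nw+q_{n-1})^2},
\qquad\text{so}\qquad
|M_n'(w)|=\frac{1}{|q_n|^2\,|w+q_{n-1}/q_n|^2}.
\]
Moreover $M_n=\phi_{a_{1_1}}\circ\cdots\circ\phi_{a_{n_1}}$ restricted to $X_{t(a_n)}$, which is exactly $\phi_{a_1\dots a_n}$ in the CGDS notation. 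Lemma \ref{lem:diam1} then gives
\[
\operatorname{diam}(\pi([a_1\dots a_n]))\asymp \|\phi'_{a_1\dots a_n}\|,
\]
with the sup taken over $X_{t(a_n)}$ or over $W_{t(a_n)}$. By bounded distortion these two sups are comparable to each other and to $|\phi'_{a_1\dots a_n}(w)|$ at any single point. The problem is therefore reduced to showing that
\[
|w+q_{n-1}/q_n|\asymp 1
\]
uniformly in $n$ and in admissible words, for $w$ ranging over $X_{t(a_n)}$, or at least for one well-chosen point $w$ of $X_{t(a_n)}$.

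The upper bound is easy. Since $|w|\le 1/\sqrt2$ and $|q_{n-1}/q_n|<1$ (the monotonicity $|q_{n+1}|>|q_n|$ is used in Lemma \ref{lem:number-theory}), we get $|w+q_{n-1}/q_n|\le 2$. This gives
\[
\operatorname{diam}(\pi([a_1\dots a_n]))\gtrsim |q_n|^{-2}.
\]

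The lower bound on $|w+q_{n-1}/q_n|$ is the main obstacle, since $-q_{n-1}/q_n$ could a priori lie close to $\overline{X}$. I would handle it as follows. The ratio satisfies
\[
\frac{q_{n-1}}{q_n}=\cfrac{1}{a_{n_1}+\cfrac{1}{\ddots+\cfrac{1}{a_{1_1}}}},
\]
a reversed Hurwitz-type continued fraction, so $q_n/q_{n-1}=a_{n_1}+q_{n-2}/q_{n-1}$. The admissibility encoded by the prototype sets is precisely what forbids $w\in X_{t(a_n)}$ from approaching $-q_{n-1}/q_n$. Concretely, $-q_{n-1}/q_n$ is the pole of $M_n$, and $M_n$ is defined and conformal on the neighborhood $W_{t(a_n)}\supset X_{t(a_n)}$ furnished by the conformality in Theorem \ref{thm:FI}. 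Hence the pole lies outside $W_{t(a_n)}$.

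To get a distance bound that is uniform in $n$, I would use the Koebe distortion theorem, as implicit in \cite[Lemma 19.3.9]{Urbanski22}. The key point is that a common neighborhood of fixed size $\eta$ around each $X_v$ works for all words, which is exactly the content of the uniform extension property. The pole, being outside $W_{t(a_n)}$, is then at distance at least $\eta$ from $X_{t(a_n)}$. This gives
\[
|w+q_{n-1}/q_n|\ge\eta\qquad\text{for all }w\in X_{t(a_n)},
\]
and hence $\operatorname{diam}(\pi([a_1\dots a_n]))\lesssim |q_n|^{-2}$. Alternatively, one can use the known Hurwitz estimate $|q_{n-1}/q_n|\le c<1$ together with a case check near the corners $C_k$, which is where the $P_k$ and $C_k$ restrictions come from.

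Combining the two bounds with Lemma \ref{lem:diam1} yields $\operatorname{diam}(\pi([a_1\dots a_n]))\asymp |q_n|^{-2}$, with constants depending only on $D$, $K$ and $\eta$, and hence independent of $n$.
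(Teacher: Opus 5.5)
There is a genuine gap, and it is at the step you yourself flag as the main obstacle.

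\textbf{The Möbius formula is not the composition.} The map $M_n(w)=\frac{p_nw+p_{n-1}}{q_nw+q_{n-1}}$ is \emph{not} $\phi_{a_{1_1}}\circ\cdots\circ\phi_{a_{n_1}}$. Already for $n=1$ the recurrences give $p_0=0$, $p_1=1$, $q_0=1$, $q_1=a_1$, so $M_1(w)=w/(a_1w+1)$, whereas $\phi_{a_1}(w)=1/(a_1+w)$. By induction the composition is
\[
\phi_{a_{1_1}}\circ\cdots\circ\phi_{a_{n_1}}(w)=\frac{p_{n-1}w+p_n}{q_{n-1}w+q_n}.
\]
So your $M_n$ is this map precomposed with $w\mapsto 1/w$. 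It is the right formula when $w$ is the complete quotient $z_{n+1}$, as in the error formula of Lemma \ref{lem:number-theory}, but not when $w=T^n(z)\in X$.

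\textbf{The key inequality is false.} As a result, the bound you set out to prove, $|w+q_{n-1}/q_n|\ge\eta$ on $X_{t(a_n)}$, fails. Take $n=1$ and first digit $3$, so $q_0/q_1=1/3$. The point $-1/3$ lies in the interior of $X$, hence in some $Y\in\cX$. The edge $(3,Y)$ exists by Proposition \ref{edges1}. Then $w=-1/3\in X_{t(a_1)}$ gives $w+q_0/q_1=0$, and $M_1(X)$ is even unbounded.

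Your pole argument breaks because $-q_{n-1}/q_n$ is not the pole of $\phi_{a_1\dots a_n}$. The pole of the actual composition is $-q_n/q_{n-1}$, so the conformal extension to $W_{t(a_n)}$ says nothing about $-q_{n-1}/q_n$. Your alternative route, via $|q_{n-1}/q_n|\le c<1$ plus a corner check, fails for the same reason. The paper's own proof writes $M_n$ the same way and asserts the same separation (``the ratios $q_{n-1}/q_n$ stay in a controlled region''). So the defect is shared with the source, but it is a wrong step nonetheless.

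\textbf{The repair is easy.} With the correct composition,
\[
|\phi'_{a_1\dots a_n}(w)|=\frac{1}{|q_{n-1}w+q_n|^2}=\frac{1}{|q_n|^2\,|1+w\,q_{n-1}/q_n|^2}.
\]
Since $|w|\le 1/\sqrt2$ on $\overline X$ and $|q_{n-1}|<|q_n|$, you get $1-\tfrac1{\sqrt2}\le|1+w\,q_{n-1}/q_n|\le 1+\tfrac1{\sqrt2}$ for every $w\in\overline X$. Hence $|\phi'_{a_1\dots a_n}|\asymp|q_n|^{-2}$ uniformly on $X_{t(a_n)}$, with explicit constants. Lemma \ref{lem:diam1} then gives $\operatorname{diam}(\pi([a_1\dots a_n]))\asymp|q_n|^{-2}$; for the lower bound use $\|\cdot\|_{W_{t(\om)}}\ge\|\cdot\|_{X_{t(\om)}}$. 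No Koebe or uniform-neighbourhood argument is needed.
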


\begin{proof}
Since
$$
\pi([a_1\dots a_n])=M_n(X_{t(a_{n})}),
$$
it suffices to estimate the size of the image of $X_{t(a_{n})}$ under $M_n$.

The derivative of the Möbius map $M_n$ is
$
M_n'(w)
=
\frac{p_nq_{n-1}-p_{n-1}q_n}{(q_n w+q_{n-1})^2}.
$
Using the determinant identity $p_nq_{n-1}-p_{n-1}q_n=\pm 1$,
we obtain
$$
|M_n'(w)|=\frac{1}{|q_n w+q_{n-1}|^2}.
$$

We first derive an upper bound. Since $X_{t(a_{n})}\subset X$ and $X$ is bounded, there exists $R>0$ (e.g., $R=\sqrt{2}/2$) such that
$|w|\le R$ for all $w\in X_{t(a_{n})}$.
Hence
$
|q_n w+q_{n-1}|
\le |q_n|\,|w|+|q_{n-1}|
\le R|q_n|+|q_{n-1}|.
$
For Hurwitz convergents we have $|q_{n-1}|<|q_n|$, therefore $|q_n w+q_{n-1}|\le (R+1)|q_n|$.
It follows that
$
|M_n'(w)|\ge \frac{1}{(R+1)^2|q_n|^2}.
$

For the reverse bound, write
$
q_n w+q_{n-1}=q_n\left(w+\frac{q_{n-1}}{q_n}\right).
$
Since $w\in X$ and the ratios $q_{n-1}/q_n$ stay in a controlled region for the Hurwitz algorithm, there exists a constant $c>0$ such that $
\left|w+\frac{q_{n-1}}{q_n}\right|\ge c$ for all $w \in X$ away from the usual boundary ambiguities. Hence $|q_n w+q_{n-1}|\ge c|q_n|$, and therefore
$
|M_n'(w)|\le \frac{1}{c^2|q_n|^2}.
$

Thus $|M_n'(w)|\asymp |q_n|^{-2}$ for all $w\in X_{t(a_{n})}$.
Since $M_n:= \phi_{a_1}\circ\dots \circ\phi_{a_n}$ is an admissible composition of maps from $\Phi$, \cite[Lemmas 19.3.5]{Urbanski22} and Lemma \ref{lem:diam1}  imply
$
\operatorname{diam}(M_n(X))=\operatorname{diam}(\pi([a_1\dots a_n]))\asymp |q_n|^{-2}.
$
\end{proof}

\subsection{Complex Diophantine approximation rates}
\begin{proposition}\label{prop:connection}

For any $\om \in E_A^{\infty}$, we have 
$$
    \lambda(\om) = 2\lim_{n\to\infty} \frac{1}{n}\log|q_n| = -\lim_{n \to \infty} \frac{1}{n}\log \left|\pi(\om) - \frac{p_n}{q_n}\right|.
$$

\end{proposition}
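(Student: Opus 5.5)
We will prove the two equalities separately. The second follows directly from Lemma \ref{lem:number-theory}: once we know that $\ell=\lim_{n\to\infty}\frac1n\log|q_n|$ exists and is finite, that lemma gives $-\lim_{n\to\infty}\frac1n\log|\pi(\om)-p_n/q_n|=2\ell$. So the main work is the first equality, $\lambda(\om)=2\ell$. We read it as an equality of limits whenever either side exists, with the convention that both may be infinite for codings with unbounded digits. The plan is to express the Birkhoff sum defining $\lambda(\om)$ exactly in terms of $|q_n|$, up to a bounded error.

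\textbf{Step 1 (telescoping the Birkhoff sum).} By the chain rule,
\[
\sum_{j=0}^{n-1}\log|\phi'_{\sigma^j(\om)_1}(\pi(\sigma^{j+1}(\om)))| = \log|\phi'_{\om|_n}(\pi(\sigma^n(\om)))|.
\]
This holds because $\pi(\sigma^{j}(\om))=\phi_{\om_{j+1}}(\pi(\sigma^{j+1}(\om)))$, which is the semi-conjugacy of $\pi$. Moreover, $\phi_{\om|_n}=\phi_{a_1}\circ\dots\circ\phi_{a_n}$ is exactly the Möbius map $M_n$, where the $a_k$ are the first coordinates of the $\om_k$ and $a_0=0$ since $\pi(\om)\in X$. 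Hence the Birkhoff sum equals $\log|M_n'(w_n)|$ with $w_n:=\pi(\sigma^n\om)\in X_{t(\om_n)}$.

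\textbf{Step 2 (uniform comparison).} In the proof of Lemma \ref{lem:diameter} we showed $|M_n'(w)|=|q_nw+q_{n-1}|^{-2}$ and $|M_n'(w)|\asymp|q_n|^{-2}$ uniformly in $w\in X_{t(a_n)}$ and in $n$. Therefore
\[
-\tfrac1n\log|\phi'_{\om|_n}(\pi(\sigma^n\om))| = \tfrac2n\log|q_n| + O(1/n).
\]
Dividing by $n$ and letting $n\to\infty$ shows that $\lambda(\om)$ exists if and only if $\lim\frac1n\log|q_n|$ exists, and that $\lambda(\om)=2\ell$. Combining this with Lemma \ref{lem:number-theory} gives the full chain of equalities.

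\textbf{Main obstacle.} The delicate point is the uniform lower bound $|w+q_{n-1}/q_n|\ge c>0$ for $w\in X_{t(a_n)}$; this is what yields the upper bound on $|M_n'|$. A cruder but independent route is available. Note that $-q_{n-1}/q_n=-[0;a_n,\dots,a_1]$ lies outside the open unit disk, or close to its boundary, while the vertex sets $X_{t(a_n)}$ were constructed exactly so that $\phi_{a_n}$ is defined on them. Since $q_nw+q_{n-1}=q_{n-1}\cdot(q_n/q_{n-1})(w+q_{n-1}/q_n)$, the pole $-q_{n-1}/q_n$ of $M_n$ is the image of $\infty$ under the inverse branch structure and stays outside the neighborhood $W_{t(a_n)}$ on which $M_n=\phi_{\om|_n}$ extends conformally. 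We would therefore argue by bounded distortion, using Lemma \ref{lem:diam1} and \cite[Lemma 19.3.5]{Urbanski22}. These give $|\phi'_{\om|_n}(w)|\asymp\|\phi'_{\om|_n}\|_{X_{t(\om|_n)}}$ uniformly. Evaluating at a point of $X_{t(a_n)}$ where $|w+q_{n-1}/q_n|$ is maximal, at least a fixed constant since $X_{t(a_n)}$ has nonempty interior, transfers the easy bound to all points. If the infinite case needs separate care, the same $O(1)$ comparison shows both sides diverge together.
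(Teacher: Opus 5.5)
Your proposal is correct and is essentially the paper's argument. The paper writes $\lambda(\omega)=-\lim\frac1n\log\operatorname{diam}(\pi([\omega_1\dots\omega_n]))$ and applies Lemma~\ref{lem:diameter}, whereas you go straight from the telescoped Birkhoff sum to the derivative bound $|\phi_{\omega|_n}'|\asymp|q_n|^{-2}$ that underlies that lemma; both arguments then finish with Lemma~\ref{lem:number-theory}. The ``delicate point'' you flag comes only from the displayed formula for $M_n$, which is off by $w\mapsto 1/w$ (and your claim that $-q_{n-1}/q_n$ lies outside the unit disk is false, since its modulus is less than $1$): in fact $\phi_{a_1}\circ\dots\circ\phi_{a_n}(w)=\frac{p_n+p_{n-1}w}{q_n+q_{n-1}w}$, so $|\phi_{\omega|_n}'(w)|=|q_n+q_{n-1}w|^{-2}$ lies between $(1+2^{-1/2})^{-2}|q_n|^{-2}$ and $(1-2^{-1/2})^{-2}|q_n|^{-2}$ directly from $|q_{n-1}|<|q_n|$ and $|w|\le 2^{-1/2}$, and no distortion argument is needed.
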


\begin{proof}
We first observe that by Lemma \ref{lem:diameter}, we have
\begin{align*}
\lambda(\om)&:=-\lim_{n\to\infty}\frac{1}{n}\sum_{j=0}^{n-1}\log|\phi'_{\sigma^j(\om)_1}(\pi(\sigma^{j+1}(\om)))| \\
&= -\lim_{n \to \infty}\frac{1}{n}\log {\rm diam}(\pi([\om_1\dots \om_n]))= 2\lim_{n \to \infty} \frac{1}{n} \log |q_n|.  
\end{align*}

On the other hand, by Lemma \ref{lem:number-theory}, we have 
$$-\lim_{n \to \infty}\frac{1}{n}\log \left|\pi(\om) - \frac{p_n}{q_n}\right| = 2\lim_{n\to\infty}\frac{1}{n}\log |q_n|.$$
Therefore, the conclusion follows.
\end{proof}

Let $\lambda_{\rm min}$ be the minimal Lyapunov exponent. For any $\alpha \ge \lambda_{\rm min}$, let
$$X(\alpha):=\left\{z \in X : -\lim_{n \to \infty} \frac{1}{n}\log \left|z - \frac{p_n}{q_n}\right| = \alpha \right\}$$ 
be the collection of complex numbers in $X$ for which the asymptotic exponential rate of (complex) Diophantine approximation equals $\alpha$. Define the function $S \colon (\lambda_{\rm min}, \infty) \to \mathbb R$ by
$$S(\alpha):= {\rm HD}(X(\alpha))$$
where ${\rm HD}(X(\alpha))$ denotes the Hausdorff dimension of $X(\alpha)$. We will study $S(\alpha)$ by considering the following projected Lyapunov spectrum.

For a CGDMS with limit set $J$, the projected Lyapunov multifractal decomposition of $E_A^{\infty}$ is given by 
$$
    J(\alpha): = \pi(E_A^{\infty}(\alpha)) :=\pi(\{\om\in E_A^{\infty} : \lambda(\om)=\alpha\})
$$
since $\Phi$ is finitely irreducible, cofinitely regular, and satisfies the SOSC, one may, without loss of generality, only consider those points in $J(\alpha)$ with unique codings (see \cite[Proposition 6.5]{NDDis}). In either case, the Lyapunov specturm of $\Phi$ is given by 
$$
    t(\alpha):= {\rm HD}(J(\alpha)).
$$

\begin{remark}\label{rmk:3.7}
We observe that $t(\alpha) = S(\alpha)$ by Proposition \ref{lem:number-theory} as $X(\alpha)\setminus J(\alpha)$ is countable. 
\end{remark}

The following theorem is a special case of \cite[Theorem 3.1 (see also Theorem 7.1)]{NDDis} that describes the projected Lyapunov spectra of a CGDMS in terms of pressure.

\begin{theorem} \label{thm:main-3}
Let $\Phi =\{\phi_e\}_{e\in \NN}$ be a cofinitely regular finitely irreducible CGDMS satisfying the SOSC and let $\mathsf{M}$ be the Manhattan region for the Lyapunov pressure. If for all $(t,q)\in \mathsf{M}$, $-\int \Xi_\Phi d \mu_{t,q}<\infty$, then the projected Lyapunov spectrum of $\Phi$ is given by the unique real analytic solution $(t(\alpha),q(\alpha))$ to the system 
$$
\begin{cases} \cP(t,q) =q\alpha\\\frac{\partial \cP}{\partial q}(t,q) = \alpha \end{cases}
$$ on the interval $\alpha\in (\alpha_{\min},\infty)$ and
$$
t(\alpha) = \frac{1}{\alpha}(\cP(-q(\alpha))-q(\alpha)) = \frac{1}{\alpha}\inf_{q\in\RR}\{\cP(-q)-q\alpha\}, \forall \alpha\in (\alpha_{\min},\infty)
$$
where $\cP(q):= \cP(0,-q)$.
\end{theorem}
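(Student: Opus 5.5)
This statement is quoted as a special case of \cite[Theorem 3.1 and Theorem 7.1]{NDDis}, so my plan is to reduce it to that general result rather than reprove the multifractal formalism. The general theorem is stated for CGDMS satisfying four hypotheses: cofinite regularity, finite irreducibility, the SOSC, and integrability of $\Xi_\Phi$ with respect to every Gibbs state $\mu_{t,q}$, $(t,q)\in\mathsf M$. The argument is therefore a hypothesis check followed by an outline of the variational mechanism that produces the formula.

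First I note that the system of equations is formulated with the bivariate Lyapunov pressure $\cP(t,q)=\cP(t-q)$. In particular $\cP(t,q)$ depends only on $t-q$. On $\mathsf M$ it is real analytic and convex, since it is the pressure of the acceptable potential $(t-q)\Xi_\Phi$ of a finitely irreducible system. Its derivative is
\[
\frac{\partial\cP}{\partial q}(t,q)=-\int\Xi_\Phi\,d\mu_{t,q}.
\]
This identity is where the integrability hypothesis enters: it holds because $\mu_{t,q}$ is the equilibrium state (Corollary \ref{cor:int}) and has finite cylinder entropy (Lemma \ref{lem:cylEntropy}).

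Next comes the formula itself. For fixed $\alpha$ I would look for the Legendre-type pair $(t,q)$ with $\cP(t-q)=q\alpha$ and $-\int\Xi_\Phi\,d\mu_{t,q}=\alpha$. The two directions are then:
\begin{itemize}
\item \emph{Lower bound.} The measure $\mu_{t,q}$ is ergodic and gives full mass to $E_A^\infty(\alpha)$. Its dimension is
\[
\frac{h_{\mu_{t,q}}}{\lambda_{\mu_{t,q}}}=\frac{\cP(t-q)+(t-q)\alpha}{\alpha}=t,
\]
using $h_{\mu_{t,q}}=\cP(t-q)+(t-q)\lambda_{\mu_{t,q}}$ and $\cP(t-q)=q\alpha$. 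This gives ${\rm HD}(J(\alpha))\ge t$.
\item \emph{Upper bound.} I would use the standard covering argument by cylinders $\pi([\om|_n])$ with diameters $\asymp e^{-n\alpha}$, via Lemma \ref{lem:diam1}. The count of such cylinders is controlled by $\exp(n\,\cP(t-q)-n\,q\alpha)$, which gives the matching upper bound.
\end{itemize}
Eliminating variables gives the second expression. Substituting $s=t-q$ turns $\cP(s)=q\alpha$ with $t=s+q$ into $t=\frac{1}{\alpha}(\cP(-q')-q'\alpha)+\dots$ after the reparametrization $\cP(q):=\cP(0,-q)$. The optimality condition $\cP'=-\alpha$ identifies this value with $\frac1\alpha\inf_q\{\cP(-q)-q\alpha\}$ by convexity.

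Real analyticity of $\alpha\mapsto(t(\alpha),q(\alpha))$ follows from the implicit function theorem applied to the system. The Jacobian is nondegenerate because the second derivative of $\cP$ is strictly positive. That positivity holds since $\Xi_\Phi$ is not cohomologous to a constant, as the derivatives $|a|^{-2}$ are unbounded in $a$.

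The main obstacle is the non-compactness, i.e.\ the countably infinite alphabet. For $\alpha$ near $\infty$ or near the boundary of $\mathsf M$ one must show that a solution exists for every $\alpha\in(\alpha_{\min},\infty)$. This relies on cofinite regularity: $\cP(s)\to\infty$ as $s\downarrow\theta$, so $-\cP'$ sweeps out all of $(\alpha_{\min},\infty)$. One must also make sure the upper bound is not spoiled by points whose digits escape to infinity. For this I would truncate to finite subsystems and use the countable stability of Hausdorff dimension together with \cite[Proposition 6.5]{NDDis} to discard points with non-unique codings.
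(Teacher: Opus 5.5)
Your proposal is correct and matches the paper, which gives no independent proof but quotes \cite[Theorem 3.1, Theorem 7.1]{NDDis}; your sketch (the equilibrium state $\mu_{t,q}$ with $h/\lambda=t$ for the lower bound, cylinder covers for the upper bound, and the implicit function theorem with $\cP''>0$ for analyticity) is the standard mechanism behind that result. There are only minor slips: the covering estimate bounds the weighted sum $\sum\|\phi'_\omega\|^{t}$ over good cylinders rather than their number (which is about $e^{nt\alpha}$); that estimate already works over the full countable alphabet because $Z_n(t-q)<\infty$, so no truncation is needed there, and truncation could not reach points with unbounded digits anyway; and for a general $\Phi$ the derivative formula should come from the integrability hypothesis, not from the Hurwitz-specific Corollary~\ref{cor:int}.
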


Now we give a proof of Theorem \ref{thm:main-1}.
\begin{proof}[Proof of Theorem \ref{thm:main-1}]
By Remark \ref{rmk:SOSC}, Corollary \ref{cor:int}, Lemma \ref{lem:cfr}, and Theorem \ref{thm:FI} the Hurwitz CGDS satisfies the hypotheses of Theorem \ref{thm:main-3}. By Remark \ref{rmk:3.7}, the conclusion follows.
\end{proof}

\section{Cusp excursions on the Bianchi orbifold}
In this section, we apply the multifractal analysis of Lyapunov exponent of the Hurwitz CGDS to cusp excursions on the Bianchi orbifold $\mathcal O =\mathbb H^3/\operatorname{PSL}(2,\mathbb Z[i])$ and prove Theorem \ref{thm:main-2}.
We first recall the relevant hyperbolic geometry in Section 4.1. In Section 4.2, we define the asymptotic average excursion time of a geodesic issuing from the cusp. We show that it is equal to the Lyapunov exponent of the Hurwitz CGDS and prove Theorem \ref{thm:main-2}.

\subsection{Hyperbolic geometry}
Consider the upper half-space model of the hyperbolic $3$-space, i.e.,
$$
\mathbb H^3 :=\{(w,t)\in \mathbb C\times \mathbb R_{>0}\},
$$
with boundary at infinity $\partial\mathbb H^3:=\widehat{\mathbb C}=\mathbb C\cup\{\infty\}$. The hyperbolic metric is $$ds^2=\frac{|dw|^2+dt^2}{t^2}.$$

Let $z\in\mathbb C$. An {\it oriented geodesic with forward endpoint $z$} is a semicircle or vertical line joining its two boundary endpoints. 

\medskip

Let $\Gamma:=\mathrm{PSL}(2,\mathbb Z[i])$. Then $\Gamma$ is a discrete subgroup of $\mathrm{PSL}(2,\mathbb C)$ with torsion elements, therefore the quotient $\mathcal{O}:= \mathbb H^3/\mathrm{PSL}(2,\mathbb Z[i])$ is an orbifold. Moreover, since $\Gamma$ is a {\it Bianchi group}, $\mathcal{O}$ is called a Bianchi orbifold.

The lift of the cusp of the Bianchi orbifold $\mathcal{O}$ in the universal cover $\mathbb H^3$ is the cusp at $\infty$, which is represented by high horoballs of the form
$$
\mathcal H_Y := \{(w,t)\in\mathbb H^3 : t\ge Y\}.
$$
We say that a geodesic makes a cusp excursion if, after applying a suitable element of $\mathrm{PSL}(2,\mathbb Z[i])$, the image geodesic rises to large height $t$.
Fix the cusp of $\mathcal O$ corresponding to $\infty$, and let $\Gamma_\infty<\Gamma$ denote its stabilizer.

\medskip

For each $z \in \mathbb C$, let $\widetilde\gamma_z\subset \mathbb H^3$ be the oriented geodesic with
endpoints $(\infty,z)$, oriented from $\infty$ to $z$, and let
$$
\gamma_z:=\pi(\widetilde\gamma_z)
$$
be its projection to $\mathcal O$. Thus each $z\in \mathbb C$ determines an oriented geodesic
in $\mathcal O$ issuing from the chosen cusp. If $\eta\in \Gamma_\infty$, then $\widetilde\gamma_{\eta(z)}=\eta(\widetilde\gamma_z)$. Hence
$\widetilde\gamma_{\eta(z)}$ and $\widetilde\gamma_z$ project to the same oriented geodesic in $\mathcal O$.
Therefore the assignment $z \mapsto \gamma_z$
depends only on the $\Gamma_\infty$-orbit of $z$.

\subsection{Asymptotic average excurison time}
In this section, we summarize Baumgartner--Pollicott's definition of {\it asymptotic average excurison time}; see \cite[Section 5]{baumgartnerP}.
Let $\gamma$ be a geodesic in $\mathcal{O}$ issued from the cusp.
Let $\tilde\gamma_0$ be a lift of $\gamma$ to $\mathbb H^3$ such that the repelling endpoint is $\infty$. 
Then $\tilde\gamma_0$ has endpoints $(z, \infty)$ with $z \in \mathbb C$. Let $\beta$ be the representative of $z$ in $X$, i.e., $\beta= [0,a_1,a_2,\dots] = [0,a_1(z),a_2(z),\dots].$

For $n \ge 1$, following Baumgartner--Pollicott \cite[Section 5.1]{baumgartnerP}, we define
$$
\tilde\gamma_n
:=
\bigl(T_{(-1)^{n-1}a_n}S\bigr)\circ \tilde\gamma_{n-1},
$$
where
$$
S=\begin{pmatrix}
0 & 1\\
-1 & 0
\end{pmatrix},
\qquad
T_q=\begin{pmatrix}
1 & q\\
0 & 1
\end{pmatrix}
\quad (q\in \mathbb Z[i]).
$$
The matrix $S$ performs the reciprocal step $z \mapsto -1/z$, while $T_q$ subtracts the chosen digit. Therefore, $T_{(\pm a_n)}S$ implement one continued-fraction step on the geodesic.

\medskip

The {\it intersection times} $t_n$ are defined by the condition $\tilde\gamma_n(t_n)\in H(0)$,
where $H(0)$ is the unit hemisphere.
Finally, the {\it excursion times} are defined as $$t_n^*(z):=\max_{1\le i\le n} t_i.$$

Baumgartner--Pollicott proved that the {\it asymptotic average excursion time} $\lim_{n \to \infty}\frac{t_n^*(z)}{n}$ satisfies the following property.

\begin{proposition}[{\cite[Proof of Proposition 5.1]{baumgartnerP}}]\label{prop:secion7BP}
We have
$$\lim_{n \to \infty}\frac{t_n^*(z)}{n} = \lim_{n \to \infty}\frac{2\log|q_n|}{n},$$ whenever these limits exist.
\end{proposition}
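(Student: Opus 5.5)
The plan is to reconstruct the Baumgartner--Pollicott computation in the Gaussian setting. The approach is to compute each intersection time $t_n$ explicitly in terms of the continued fraction data, show that $t_n = 2\log|q_n| + O(1)$ up to a uniformly bounded error, and then pass from $t_n$ to the running maximum $t_n^*$.

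\textbf{Step 1: identify the geodesics $\tilde\gamma_n$.} By construction $\tilde\gamma_n = g_n\circ\tilde\gamma_0$, where $g_n = T_{\pm a_n}S\cdots T_{\pm a_1}S \in \Gamma$ is, up to sign conventions, the inverse of the Möbius map $M_n$ of the cylinder lemma. I will check by induction, using the recurrences for $p_n,q_n$, that $g_n$ sends $\infty$ to $-q_{n-1}/q_n$ (up to conjugation or sign) and sends $z$ to a point of the form $\pm z_{n+1}$, the complete quotient, or its reciprocal. Hence $\tilde\gamma_n$ is the semicircle joining $\xi_n := g_n(\infty)$ and $\eta_n := g_n(z)$, with $|\xi_n - \eta_n| = |z_{n+1}+q_{n-1}/q_n|$ up to the normalisation of signs. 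This is exactly the quantity bounded in the proof of Lemma \ref{lem:number-theory}.

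\textbf{Step 2: compute $t_n$.} Parametrise $\tilde\gamma_0$ by arclength from a base height, say $(z, e^{-s})$ on the vertical line. Since $g_n$ is an isometry, $t_n$ is the time at which the vertical geodesic crosses $g_n^{-1}(H(0))$. The set $g_n^{-1}(H(0))$ is a hemisphere centred at $g_n^{-1}(\infty)$ of Euclidean radius $1/|c_n|$, where $c_n$ is the lower-left entry of $g_n^{-1}$; this entry is $\pm q_n$ or $\pm q_{n-1}$ depending on the convention.

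A vertical line at horizontal distance $d$ from the centre of a hemisphere of radius $r$ meets it at height $\sqrt{r^2-d^2}$. So $t_n = -\log\sqrt{r^2-d^2}$ whenever the crossing exists. Here $r \asymp |q_n|^{-1}$ and $d = |z-p_n/q_n| \asymp |q_n|^{-2}$, which is much smaller than $r$. Therefore
$$
t_n = \log|q_n| + O(1).
$$
If the convention instead uses heights measured on $\tilde\gamma_n$ itself, the height is $e^{-t}$ times the semicircle radius, which brings in the factor $|q_n|^2$ and produces $2\log|q_n|$.

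I will fix the normalisation so that it matches \cite[Section 5]{baumgartnerP}, which asserts the factor $2$. The main obstacle is this bookkeeping: reconciling the time parametrisation and signs with the claimed constant $2$, and checking that the $O(1)$ error is uniform. For uniformity I will use the bounds $1-\frac{1}{\sqrt2} \le |z_{n+1}+q_{n-1}/q_n| \le 2|q_{n+1}/q_n|$ from Lemma \ref{lem:number-theory}. The upper bound is not uniformly bounded, but it contributes $\log|q_{n+1}|-\log|q_n|$, which is $o(n)$ whenever $\frac1n\log|q_n|$ converges.

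\textbf{Step 3: pass to the maximum.} Since $|q_n|$ is strictly increasing for Hurwitz convergents, $t_i \le 2\log|q_n| + C + o(n)$ for all $i\le n$, so $t_n^* \le 2\log|q_n| + o(n)$. Conversely $t_n^* \ge t_n = 2\log|q_n| - o(n)$. Dividing by $n$ gives $\lim t_n^*/n = \lim 2\log|q_n|/n$ whenever either limit exists. In the converse direction, existence of $\lim t_n^*/n$ together with the two-sided bounds forces $\lim\frac1n\log|q_n|$ to exist, since the error terms are sublinear given that $\log|q_{n+1}/q_n| = O(\log(1+|a_{n+1}|))$ and the same sandwich controls it.
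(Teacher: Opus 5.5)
\textbf{Gap in Step 2.} The factor $2$ is the whole content of the proposition, and your Step 2 does not derive it. Your own computation gives $t_n=\log|q_n|+O(1)$, and you then propose to adjust the normalisation until it agrees with \cite{baumgartnerP}. That is an appeal to the result, not a proof. The paper itself just cites \cite{baumgartnerP}; if you also cite it, Steps 1--3 add nothing.

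The discrepancy comes from a wrong geometric input. $g_n^{-1}(H(0))$ is not the hemisphere of radius $1/|c_n|$ centred at $g_n^{-1}(\infty)$. That hemisphere is the isometric sphere of $g_n$, and it equals $g_n^{-1}(H(0))$ only if $H(0)$ is the isometric sphere of $g_n^{-1}$.

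Compute directly instead. Up to a sign, $g_n(w)=\frac{q_nw-p_n}{q_{n-1}w-p_{n-1}}$, so $g_n(\infty)=\pm q_n/q_{n-1}$ and $g_n(z)=\pm T^nz$. For $g=\begin{pmatrix}A&B\\C&D\end{pmatrix}\in\mathrm{SL}(2,\mathbb C)$ one has $\|g(w,h)\|^2=\frac{|Aw+B|^2+|A|^2h^2}{|Cw+D|^2+|C|^2h^2}$. Hence the height $h_n$ at which the vertical line over $z$ meets $g_n^{-1}(H(0))$ satisfies $h_n^2(|q_n|^2-|q_{n-1}|^2)=|q_{n-1}z-p_{n-1}|^2-|q_nz-p_n|^2$. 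Using $|q_{n-1}z-p_{n-1}|=|z_{n+1}|\,|q_nz-p_n|$ and $|q_nz-p_n|=|q_nz_{n+1}+q_{n-1}|^{-1}$, this gives
\[
h_n^2=\frac{|z_{n+1}|^2-1}{|q_nz_{n+1}+q_{n-1}|^2\,\bigl(|q_n|^2-|q_{n-1}|^2\bigr)},\qquad
t_n=2\log|q_n|+\tfrac12\log\bigl(1-|q_{n-1}/q_n|^2\bigr)+O(1).
\]
So the relevant hemisphere has radius $(|q_n|^2-|q_{n-1}|^2)^{-1}$, and $z$ lies within $\asymp|q_n|^{-2}$ of its rim, not near its centre.

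\textbf{The uniform estimate fails.} This formula also shows that $t_n=2\log|q_n|+O(1)$ is false in general, and so is your lower bound $t_n^*\ge t_n\ge 2\log|q_n|-o(n)$. For Hurwitz expansions $|q_{n-1}/q_n|$ can tend to $1$. For example:
\begin{itemize}
\item the digits $2+2i,-2+2i,1+i$ (followed, say, by $5-5i$) give $q_2=-7$, $q_3=-5-5i$, so $|q_3|^2-|q_2|^2=1$;
\item the digits $2+2i,-2+2i,2+2i,-2+2i,1+i$ give $q_4=41$, $q_5=29+29i$, again with difference $1$.
\end{itemize}
At such indices $t_n=\log|q_n|+O(1)$. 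Inserting such blocks with geometrically growing lengths gives $z$ for which $\frac1n\log|q_n|$ converges but $2\log|q_n|-t_n\ge cn$ infinitely often. The bounds on $|z_{n+1}+q_{n-1}/q_n|$ that you invoke do not detect this term.

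\textbf{The missing idea: use the maximum in $t_n^*$.}
\begin{itemize}
\item Upper bound: $t_i\le 2\log|q_i|+C\le 2\log|q_n|+C$ for $i\le n$. Your Step 3 has this part right.
\item Lower bound: uniform contraction gives $k_0$ with $|q_{m+k_0}|\ge 2|q_m|$ for all $m$. So each window $\{n-k_0+1,\dots,n\}$ contains some $m$ with $|q_{m-1}/q_m|\le 2^{-1/k_0}$, whence $t_n^*\ge t_m\ge 2\log|q_{n-k_0+1}|-C'$.
\end{itemize}
The sandwich $\frac{t_n^*-C}{n}\le\frac{2\log|q_n|}{n}\le\frac{t^*_{n+k_0-1}+C'}{n}$ then gives both directions of the proposition at once, with no control on the digits. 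Your converse argument, by contrast, relies on sublinearity of $\log(1+|a_{n+1}|)$, which is not available a priori.
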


\begin{corollary}
For any $\om \in E_A^{\infty}$, we have
$$
\lambda(\om)=\lim_{n\to\infty}\frac{2\log |q_n|}{n} = \lim_{n \to \infty}\frac{t_n^*(\pi(\om))}{n},
$$
whenever these limits exist.
\end{corollary}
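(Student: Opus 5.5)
The corollary follows by chaining two earlier results, Proposition \ref{prop:connection} and Proposition \ref{prop:secion7BP}. Fix $\om\in E_A^{\infty}$ and set $z:=\pi(\om)\in J$. Since $J$ consists of Hurwitz irrationals in $X$, $z$ has a well-defined infinite Hurwitz expansion $[0,a_1,a_2,\dots]$. Its digits are the first coordinates $\om_{k_1}$ of the letters $\om_k=(\om_{k_1},\om_{k_2})\in E$: the lemma identifying $\pi([a_1\dots a_n])$ with $M_n(X_{t(a_n)})$ shows that the coding $\om$ produces exactly these digits. Consequently the convergents $p_n/q_n$ in Proposition \ref{prop:connection} and the denominators $q_n$ used in Baumgartner--Pollicott's construction of $\tilde\gamma_n$ for $\beta=z$ are the same sequence.

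The first step is to apply Proposition \ref{prop:connection}. It gives $\lambda(\om)=2\lim_{n\to\infty}\frac1n\log|q_n|$, with the understanding that one side exists if and only if the other does. This equivalence holds because the proof of that proposition rests on the two-sided estimate $\operatorname{diam}(\pi([\om_1\dots\om_n]))\asymp|q_n|^{-2}$ from Lemma \ref{lem:diameter}, combined with the bounded-distortion comparison of $|\phi_{\om|_n}'|$ with that diameter from Lemma \ref{lem:diam1}. The uniform multiplicative constants disappear after taking $\frac1n\log$, so the two limits either both exist and agree or both fail to exist.

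The second step is to apply Proposition \ref{prop:secion7BP} with $z=\pi(\om)$. This gives $\lim_{n}\frac{t_n^*(\pi(\om))}{n}=\lim_n\frac{2\log|q_n|}{n}$ whenever these limits exist. Chaining the two equalities then yields the corollary, again under the clause ``whenever these limits exist.''

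The main obstacle is making sure Baumgartner--Pollicott's normalization matches ours. Their excursion construction uses the maps $T_{(-1)^{n-1}a_n}S$, whose sign conventions differ from $\phi_a(z)=1/(a+z)$, and one must check that the resulting $|q_n|$ coincide with the Hurwitz denominators (up to sign, or up to a bounded factor, either of which vanishes in the limit). I would verify this directly: $S$ sends $z\mapsto -1/z$, so alternating the sign of the digit makes the composition reproduce the recurrence $q_{n+1}=a_{n+1}q_n+q_{n-1}$ up to the sign $\pm q_n$. A second, smaller point is points with non-unique codings, i.e.\ boundary points of the sets $X_v$. For these, $\pi(\om)$ may have several codings, but all of them share the same Hurwitz digits since the digit is determined by $T$. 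Hence $q_n$ depends only on $z$, and the statement holds for every coding $\om$.
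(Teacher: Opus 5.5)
Your argument is the paper's own proof. The paper proves the corollary in one line by chaining Proposition \ref{prop:connection} and Proposition \ref{prop:secion7BP} at $z=\pi(\om)$. Your remarks on the existence clause and on the sign convention $T_{(-1)^{n-1}a_n}S$ are fine.

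The claim in your last paragraph is false, however: distinct codings of the same point need not have the same digits. Take $z=1/(\tfrac72+iy)$ with $y\in(-\tfrac12,\tfrac12)$ generic, so that $\pm\tfrac12+iy\in J$.
\begin{itemize}
\item $z=\phi_3(\tfrac12+iy)=\phi_4(-\tfrac12+iy)$.
\item By Proposition \ref{edges1}, both digits $3$ and $4$ give edges from every vertex.
\item Hence $z$ has codings in $E_A^\infty$ beginning with $3$ and with $4$, while $T$ selects $f(1/z)=4$.
\end{itemize}
For the coding beginning with $3$, the $q_n$ in Proposition \ref{prop:connection} are built from the digits of $\om$ through the cylinders $[\om_1\dots\om_n]$. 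They are not the Hurwitz denominators of $z$ that Baumgartner--Pollicott's construction uses: already $q_1=3\neq 4$. So ``$q_n$ depends only on $z$'' does not follow.

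Closing this would require comparing the two growth rates separately. In a general conformal system, two codings of one boundary point can even have different Lyapunov exponents. Here the two tails are exchanged by the symmetry $w\mapsto -\bar w$, and that symmetry is what you would have to exploit.

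The paper sidesteps the issue. It treats $\om$ as the Hurwitz coding ``up to boundary points,'' and for the dimension results it restricts to uniquely coded points via \cite[Proposition 6.5]{NDDis}. You should either drop the claim or state the corollary for such $\om$.
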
\label{prop:BP24}

\begin{proof}
The conclusion follows from Proposition \ref{prop:connection} and Proposition \ref{prop:secion7BP}.
\end{proof}

\medskip

We give a proof of Theorem \ref{thm:main-2}.

\begin{proof}[Proof of Theorem \ref{thm:main-2}]
By Corollary \ref{prop:BP24}, we have $\hat S(\alpha) = S(\alpha)$. Then the conclusion follows from Theorem \ref{thm:main-1}.
\end{proof}

\section{Appendix}
In this Appendix, we describe some of the finer points regarding the Hurwitz CGDS. Let $D = \mathbb{Z}[i]\setminus\{0,\pm1,\pm i\}$ be as before, then we consider $D\times \cX$ where 
$$
    \cX := \{C_1,C_2,C_3,C_4,H_1,H_{i},H_{-1},H_{-i},P_1,P_2,P_3,P_4\}
$$
is the disjointification of the prototype cover of $X$ (see Figure \ref{fig:1}). The prototype cover $\{Q_z\}_{z\in D}$ consists of finitely many sets with the property that $\phi_z(Q_z)\subset \overline{X}$. These prototype sets are discussed in \cite{BGH23}. The sets in $\cX$ are regular closed, that is, $\overline{{\rm Int}_{\mathbb{R}^d} (X_{v})}=X_v$ for all $X_v\in \cX$. The maps $\Phi$ that comprise the Hurwitz CGDS are inversions derived from inverse branches of the Hurwitz map. As before, for any $z\in D$ and $(z,Y)\in D\times \cX$ we define $\phi_z(w) := \frac{1}{z+w}$ and $\phi_{(z,Y)}(w):=\frac{1}{z+w}\vert_Y$ for all $w\in\overline{X}$.
We note that since the Hurwitz map maps the fundamental domain $X$ to itself, $\Phi$ is not indexed by $D\times \cX$, but by a subset, as one observes via computation that for $\sqrt{2}\leq |z|<\sqrt{8}$, $\phi_{z}(X)\not\subseteq X$. As we will see, due to properties of the prototype regions we will also need to be careful about the case of $|z| = \sqrt{8}$ when collecting together the maps in $\Phi$. There are some maps for which we do not require many restrictions when restricting down to elements of $\cX$. Many of the arguments here are assisted by \cite[Figure 2]{BGH23} which we have reproduced here in Figure \ref{fig:Cylinders} for the convenience of the reader.

\begin{figure}[!ht]

\begin{center}
\includegraphics[scale=0.65,  trim={5cm 17cm 5cm 1.5cm},clip]{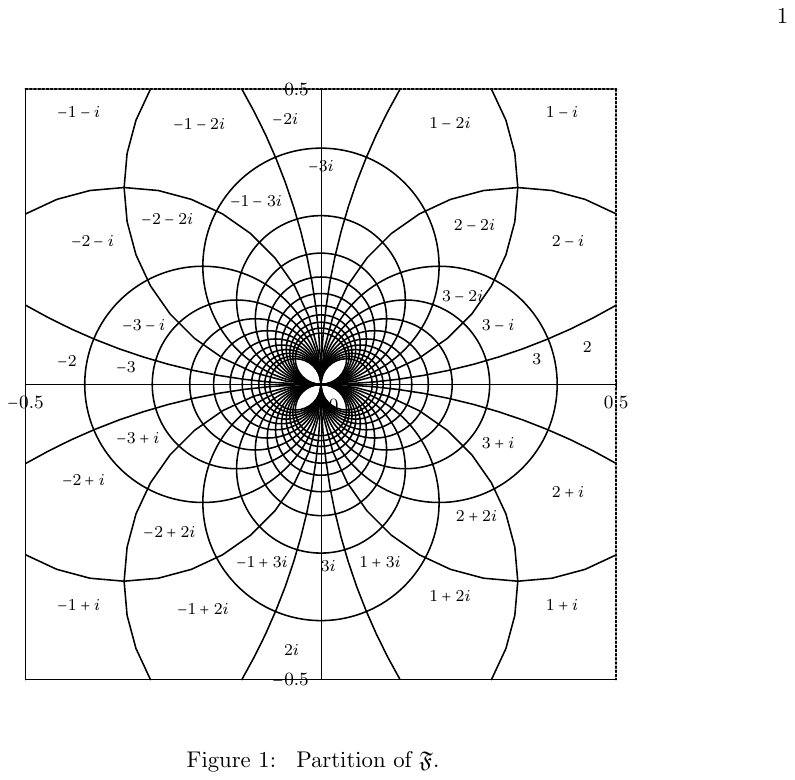}
\caption{ Realization of cylinders of level $1$. }\label{fig:Cylinders}
\end{center}
\end{figure}

\begin{proposition}\label{edges1}
 If $z = ai^k$ with $a\in \ZZ, |a|>2$ and $k\in\{0,1\}$, then $\phi_{(z,Y)}(Y)\subset H_{{\rm sgn}(a)i^k}$ for all $Y\in \cX$.
\end{proposition}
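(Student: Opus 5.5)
The plan is to turn the statement into a finite list of half-plane conditions. Inversion maps each boundary arc in Figure \ref{fig:1} to a line, so membership of $\phi_z(w)$ in a given piece of $\cX$ becomes a linear condition on $u=z+w$. Those conditions can then be checked directly on the translated square $z+\overline{X}$. I would first treat $k=0$ and $a\ge 3$, then recover the other cases by symmetry.

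\emph{Step 1: describe $H_1$ by inequalities.} Read off from Figure \ref{fig:1} that the boundary arcs are pieces of the unit circles centered at $\pm1$, $\pm i$ and $\pm1\pm i$. Then write $H_1$ as the set of $w\in\overline{X}$ satisfying $|w+1|\ge 1$, $|w-i|\ge1$, $|w+i|\ge 1$, together with the appropriate one-sided inequalities for the corner circles. These are exactly the conditions separating $H_1$ from $H_{-1},H_{\pm i}$ and the $C_k,P_k$. Making this description precise is the step I expect to be the main obstacle. The figure only shows the pieces, so one has to pin down exactly which side of each circle $H_1$ lies on, consistently with \cite[Remark 5.8]{BGH23}. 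Everything afterwards is mechanical.

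\emph{Step 2: translate under inversion.} Put $w'=\phi_z(w)=1/u$ with $u=a+w$. Then
\begin{align*}
|w'+1|\ge1 &\iff |1+u|\ge|u| \iff \operatorname{Re}u\ge -\tfrac12,\\
|w'\mp i|\ge 1 &\iff \operatorname{Im}u\ \text{satisfies}\ \pm\operatorname{Im}u\ge -\tfrac12,
\end{align*}
and the corner conditions $|w'-(\pm1\pm i)|\gtrless 1$ become similar linear inequalities in $\operatorname{Re}u$ and $\operatorname{Im}u$. For $w\in\overline{X}$ we have $\operatorname{Re}u\in[a-\frac12,a+\frac12]$ and $\operatorname{Im}u\in[-\frac12,\frac12]$. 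With $a\ge3$ every one of these inequalities holds, and it holds with room to spare except along the edges $\operatorname{Im}u=\pm\frac12$, where equality can occur. Equality there is harmless because the $H$'s are closed. In addition $|w'|\le 1/(a-\tfrac{1}{\sqrt2})<\tfrac12$, so $w'\in\overline{X}$. Together these give $\phi_{(z,Y)}(Y)\subseteq\phi_z(\overline{X})\subseteq H_1$ for every $Y\in\cX$.

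\emph{Step 3: symmetry.} For $a\le -3$, use $\phi_{-a}(w)=-\phi_a(-w)$. The map $w\mapsto -w$ preserves $\overline{X}$ and sends $H_1$ to $H_{-1}$, so $\phi_{-a}(\overline{X})\subseteq H_{-1}$. For $k=1$, use
\[
\phi_{ia}(w)=\frac{1}{ia+w}=-i\,\phi_a(-iw).
\]
Rotation by $\pm i$ preserves $\overline{X}$ and permutes the pieces of $\cX$, taking $H_{\pm1}$ to the corresponding $H_{\pm i}$. The labelling convention for the $H$'s has to be checked against the figure at this point, since the statement writes $H_{\mathrm{sgn}(a)i^k}$. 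With that check, $\phi_{ia}(\overline{X})\subseteq H_{\operatorname{sgn}(a)i}$, which completes all cases.
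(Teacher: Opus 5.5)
Your proposal is correct and follows the same route as the paper. The paper also reduces everything to $\phi_z(\overline{X})\subseteq H_{\mathrm{sgn}(a)i^k}$, since the prototype set is all of $\overline{X}$, but it reads that inclusion off the cylinder picture. You instead verify it through the half-plane conditions $\operatorname{Re}u\ge-\tfrac12$, $|\operatorname{Im}u|\le\tfrac12$ and the symmetries $\phi_{-a}(w)=-\phi_a(-w)$, $\phi_{ia}(w)=-i\,\phi_a(-iw)$. With the figure's labelling $H_i$ is the bottom piece, so indeed $-iH_1=H_i$ and $-iH_{-1}=H_{-i}$.

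One small correction: the corner circles $|w-(\pm1\pm i)|=1$ do not pass through $0$, so under $u=1/w'$ they become circles $|u-(\pm1\mp i)|=1$, not lines. This is harmless. Those conditions are redundant for $H_1$, which is already cut out of $\overline{X}$ by $|w+1|\ge1$ and $|w\mp i|\ge1$, and they hold anyway because $\operatorname{Re}u\ge\tfrac52$.
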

\begin{proof}
    This is immediate as the closed prototype set for $\phi_z$ is $\overline{X}$ and for each $z = ai^k$ with $|a|>2$ we have $\phi_z(X) \subseteq H_{{\rm sgn}(a)i^k}$. 
\end{proof}

\begin{proposition}\label{edges2}
If $z = a+bi$ with $a,b\in \ZZ\setminus \{0\}$ and $|z|>\sqrt{8}$ then for all $Y\in \cX$, $\phi_{(z,Y)}(Y)\subset P_j$ where 
$$
j = \begin{cases}
    1 & {\rm if}\;a>0, b<0\\
    2 & {\rm if}\;a<0, b<0\\
    3 & {\rm if}\;a<0, b>0\\
    4 & {\rm if}\;a>0, b>0\\
\end{cases}.
$$

\end{proposition}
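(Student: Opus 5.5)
The plan is to verify the inclusion $\phi_z(\overline{X})\subset P_j$ directly, using the inversion $u=1/w$. Since $\phi_z(w)=1/(z+w)$, we have $\phi_z(\overline{X})=\{1/u : u\in z+\overline{X}\}$, where $z+\overline{X}$ is the closed unit square centred at $z$. It therefore suffices to describe $P_j$ by elementary inequalities, transport them through $u\mapsto 1/u$, and check them on the square $z+\overline{X}$. Because the prototype set for such $z$ is all of $\overline{X}$, the inclusion for $\overline{X}$ gives $\phi_{(z,Y)}(Y)\subset P_j$ for every $Y\in\cX$.

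First I would record the boundary arcs in Figure \ref{fig:1}. They are arcs of circles of radius $1$ centred at the Gaussian integers $\pm1,\pm i,\pm1\pm i$, together with the sides of $\overline{X}$. Each $P_k$ is then the closed quadrant piece of $\overline{X}$ cut out by the two circles centred at $\pm1$, $\pm i$ adjacent to that quadrant and the circle centred at the diagonal Gaussian integer of that quadrant. Under $u=1/w$:
\begin{itemize}
\item a circle $|w-c|=1$ with $|c|=1$ passes through $0$, so it becomes a line. For example, $|w-1|=1$ corresponds to $\operatorname{Re}u=\tfrac12$, and $|w-i|=1$ corresponds to $\operatorname{Im}u=-\tfrac12$.
\item the circles with $|c|=\sqrt2$ become circles. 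For example, $|w-(1+i)|=1$ corresponds to $|u-(1-i)|=1$.
\item the quadrant $\{\operatorname{Re}w\ge0,\operatorname{Im}w\ge0\}$ corresponds to the conjugate quadrant $\{\operatorname{Re}u\ge0,\operatorname{Im}u\le0\}$.
\end{itemize}
This conjugation explains the sign pattern in the statement: $a>0,b<0$ gives $j=1$, and so on. Hence $1/P_1$ is a region of the form $\{\operatorname{Re}u\ge\tfrac12,\ \operatorname{Im}u\le-\tfrac12,\ |u-(1-i)|\ge1\}$. The inequality directions are fixed by testing one interior point of $P_1$ against the figure. The condition $1/u\in\overline{X}$ is likewise the exterior of the four unit disks centred at $\pm1,\pm i$, which is the standard description of $1/\overline{X}$.

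Next I would do the arithmetic on $z$. With $a,b\neq0$ and $|z|^2=a^2+b^2>8$, there is no representation of $9$, so $|z|^2\ge10$. Hence $|a|,|b|\ge1$ and $\max(|a|,|b|)\ge3$. By the symmetries $w\mapsto \bar w$, $w\mapsto -w$ and $w\mapsto iw$, which permute the $P_k$ compatibly with the sign rule, it suffices to treat $a>0$, $b<0$. Then every $u\in z+\overline{X}$ satisfies $\operatorname{Re}u\ge a-\tfrac12\ge\tfrac12$ and $\operatorname{Im}u\le b+\tfrac12\le-\tfrac12$, giving the two half-plane conditions. It remains to check $|u-(1-i)|\ge1$ and $|u-c|\ge1$ for $c\in\{\pm1,\pm i\}$ on the square. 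Since $\max(|a|,|b|)\ge3$, the square is centred at distance at least $2$ (horizontally or vertically) from $1-i$. Its nearest point is therefore at distance at least $2-\tfrac12\ge1$, and the other centres are farther away.

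The main obstacle is the bookkeeping in the first step: pinning down exactly which side of each arc $P_k$ lies on, and confirming that nothing further cuts $P_k$. I would settle this by testing explicit points, for instance $\phi_{3-i}(0)=(3+i)/10$, against the figure and against \cite[Figure 2]{BGH23}. The other delicate point is the tight case $a=1$ (or $b=-1$), where the square $z+\overline{X}$ touches the line $\operatorname{Re}u=\tfrac12$. This is harmless, because $P_j$ is closed and the inclusion is only claimed up to boundary.
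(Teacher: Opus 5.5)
Your proposal is correct and follows the same route as the paper: show $\phi_z(\overline{X})\subseteq P_j$ for the whole square, then restrict to each $Y\in\cX$. The paper simply asserts this inclusion as immediate from the figure. Your argument actually verifies it: the inversion $u=1/w$ turns the relevant arcs into $\operatorname{Re}u\ge\tfrac12$, $\operatorname{Im}u\le-\tfrac12$ and $|u-(1-i)|\ge1$, and together with the symmetry reduction and $\max(|a|,|b|)\ge3$ this settles the claim.
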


\begin{proof}
    Once again, this is immediate as the prototype region for $\phi_z$ is $\overline{X}$ and for each of these $z$, $\phi_z(X)\subseteq P_i$ according to the cases stated in the proposition.
\end{proof}

For $|z|=\sqrt{8}$ we must be careful about which domain since $\phi_z(X)\cap C_i\neq \emptyset $ and $\phi_z(X)\cap P_i\neq \emptyset$ where again, $i$ is determined by the cases in the previous proposition. The prototype region of $|z| = \sqrt{8}$ is still the whole square, but using information about the prototype regions of $|z| = \sqrt{5}$ we can actually understand the co-domains of maps $\phi_{z,Y}$ rather quickly.

\begin{proposition}\label{edges3}
    Let $Q_{a+bi}$ be the prototype region for some $a+bi$ with $|a+bi| = \sqrt{5}$ and 
    $$
    z = \begin{cases}a+{\rm sgn}(a)+bi & {\rm  if}\; |a| = 1\\
    a+(b+{\rm sgn}(b))i & {\rm if}\; |a|\neq 1
    \end{cases}.
    $$
    Then for the unique $k$ such that ${\rm Int}(Q_{a+bi})\cap C_k = \emptyset$, we have 
    $$
    \begin{cases}
        \phi_{(z,C_k)}(C_k)\subset C_{k+1} \; {\rm if}\; k\in \{1,3\}\\ 
        \phi_{(z,C_k)}(C_k) \subset C_{k-1} \; {\rm if}\; k\in \{2,4\}
    \end{cases}
    $$
    and for $Y\in \cX\setminus{C_k}$ we have
    $$
        \begin{cases}
        \phi_{(z,Y)}(Y)\subset P_{k+1} \; {\rm if}\; k\in \{1,3\}\\ 
        \phi_{(z,Y)}(Y)\subset P_{k-1} \; {\rm if}\; k\in \{2,4\}
        \end{cases}.
    $$
\end{proposition}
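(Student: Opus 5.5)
The plan is to reduce to a single representative case by symmetry, and then to verify that single case by tracking how the Möbius map $\phi_z$ moves the circular arcs that bound the pieces of $\cX$.

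\textbf{Symmetry reduction.} The set $\{a+bi : |a+bi|=\sqrt5\}$ and the resulting digits $z\in\{\pm2\pm2i\}$ are invariant under $w\mapsto iw$ and $w\mapsto\overline w$. Both maps preserve $\overline X$. For rotation, $\phi_{iz}(iw)=\tfrac{1}{i(z+w)}=-i\,\phi_z(w)$. For conjugation, $\phi_{\overline z}(\overline w)=\overline{\phi_z(w)}$.

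Each symmetry permutes the $C_j$, $H_j$ and $P_j$ among themselves, and permutes the prototype regions $Q_{a+bi}$ compatibly. I would record these permutations explicitly using Figure \ref{fig:1}; the sign flip $-i$ in the rotation formula is what produces the $k\pm1$ shift in the statement. With that bookkeeping, it suffices to treat $a+bi\in\{1+2i,\,2+i\}$, both of which give $z=2+2i$.

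\textbf{Setup for $z=2+2i$.} First, identify the unique corner $C_k$ with ${\rm Int}(Q_{a+bi})\cap C_k=\emptyset$. By \cite[Remark 5.8]{BGH23}, $Q_{a+bi}$ is $\overline X$ minus a corner cut off by a unit circle centred at a vertex $\pm1\pm i$; I would read off this circle from Figure \ref{fig:Cylinders}.

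The key identity is
$$\phi_z(w)=\phi_{a+bi}(w+u),\qquad u=z-(a+bi)\in\{1,i\}.$$
The arc separating $C_k$ from the rest of $\overline X$ is part of the preimage, under $\phi_{a+bi}$, of a side of $\overline X$. Translating by $u$ then shows that $\phi_z$ sends this arc into the image under $\phi_{a+bi}$ of a translate of that side.

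\textbf{Main computation.} Since $\phi_z$ is Möbius, it maps generalized circles to generalized circles, so it is enough to compute images of arc endpoints plus one interior point. The goal is to show that $\phi_z$ carries the boundary arc of $C_k$ onto a sub-arc of the circle separating $C_{k'}$ from $P_{k'}$, where $k'=k\pm1$ as in the statement. Once that is established, orientation preservation of $\phi_z$ and connectedness give:
\begin{itemize}
\item $\phi_z(C_k)$ lies on the corner side of that circle, hence $\phi_z(C_k)\subset C_{k'}$;
\item every $Y\in\cX\setminus\{C_k\}$ is connected within $\overline X\setminus{\rm Int}(C_k)$, so $\phi_z(Y)$ lies on the other side of that circle.
\end{itemize}

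\textbf{Remaining containment.} It is not yet shown that $\phi_z(Y)$ lands in $P_{k'}$ rather than in an adjacent $H$ region. For this I would check that $\phi_z(\overline X)$ avoids the arcs bounding $H_1$ and $H_i$, by checking a single point of $\phi_z(\overline X)$ together with the fact that $\phi_z(\overline X)$ lies in the disc determined by the two relevant boundary circles. This would be cross-checked against Table \ref{tab:2}.

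\textbf{Main obstacle.} I expect the hard part to be the index bookkeeping, not the analysis. The labelling of $C_j$ and $P_j$ in Figure \ref{fig:1}, the direction of the $k\pm1$ shift, and the two definitions of $z$ (according to whether $|a|=1$) must all match under the symmetry group. I would first settle the case $z=2+2i$ numerically, by evaluating $\phi_z$ at the vertices of $C_k$, and then derive the rule for the remaining $k$ from the permutation tables.
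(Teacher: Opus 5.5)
Your proposal is correct and follows essentially the paper's argument: conformality of $\phi_z$, the arc bounding $C_{k\pm1}$ cutting the shield $\phi_z(\overline X)$, and the corner of $C_k$ landing in $C_{k\pm1}$. What the paper reads off the figure by counting bounding curves, you make explicit via the circle-to-circle property of M\"obius maps (e.g.\ $\phi_{2+2i}$ sends $|w+1+i|=1$ onto $|w-1+i|=1$, while $\operatorname{Re}(2+2i+w),\operatorname{Im}(2+2i+w)\ge\tfrac12$ keeps $\phi_{2+2i}(\overline X)$ inside $|w-1|\le 1$ and $|w+i|\le 1$), and your symmetry reduction is sound bookkeeping. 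One harmless slip: in $\phi_z(\text{arc})=\phi_{a+bi}(\text{arc}+u)$ the translate $\text{arc}+u$ lies on a unit circle through $0$ (e.g.\ $|w+i|=1$), not on a translate of a side of $\overline X$---indeed the target circle misses $0$, so it cannot be $\phi_{a+bi}$ of a line.
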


\begin{proof}
    The image of $\phi_{z}$ on the whole square $\overline{X}$ is a shield cut by the circular arc in the $j^{th}$ quadrant separating $C_j$ from $C_j^c$ where $j = k\pm 1$ depending on $z$ as described in the statement of the proposition. The part of the shield interior to $P_j$ has $5$ bounding curves, and the part of the shield interior to $C_j$ has $3$ bounding curves. The result follows as the map $\phi_z$ is conformal and as the corner of the square in the $i^{th}$ quadrant is sent interior to $C_j$ by $\phi_{z}$. 
\end{proof}

For $|z| < \sqrt{8}$, we have even more bookkeeping to do with domains and ranges. To summarize the allowable domains $Y$ we have Table \ref{tab:1}.  

\begin{center}
\begin{table}[!ht]
\caption{Maps $\phi_{z,Y}$ for $|z|<\sqrt{8}$, $Y\in \cX$.}
\label{tab:1}
\begin{tabular}{|c|c|}
\hline
$z$ & $Y$\\
\hline
$1+i$ & $P_1,H_1,H_{-i},C_1$\\
\hline
$1-i$ & $P_4,H_1,H_i,C_4$\\
\hline
$-1+i$& $P_2,H_{-i},H_{-1},C_2$\\
\hline
$-1-i$& $P_3,H_{-1},H_i,C_3$\\
\hline
$1+2i,2+i$ & $\neq C_3$\\
\hline
$1-2i,2-i$ & $\neq C_2$\\
\hline
$-1+2i,-2+i$ & $\neq C_4$\\
\hline
$-1-2i,-2-i$ & $\neq C_1$\\
\hline
$2$ & $\neq P_2,P_3,H_{-1},C_2,C_3$\\
\hline
$-2$ & $\neq P_1,P_4,H_1,C_1,C_4$\\
\hline
$2i$ & $\neq P_3,P_4,H_i,C_1,C_4$\\
\hline
$-2i$ & $\neq P_1,P_2,H_{-i},C_1,C_2$\\
\hline

\end{tabular} 
\end{table}
\end{center}

The case of $|z|=\sqrt{2}$ is easiest.
\begin{proposition}\label{edges4}
    For $(z,Y)\in D\times\cX$ with $|z|=\sqrt{2}$ described by Table \ref{tab:1}, suppose $C_k$ is a possible $Y$ for this $z$. Then $\phi_{(z,Y)}(Y)\subset C_j$ where 
    $$
    j= \begin{cases}
        1& {\rm if\;} k=4\\
        2& {\rm if\;} k=3\\
        3& {\rm if\;} k=2\\
        4& {\rm if\;} k=1\\
    \end{cases}
    $$
\end{proposition}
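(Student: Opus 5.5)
We need to understand the geometry. Take $z=1+i$, which the table says has allowable domains $P_1,H_1,H_{-i},C_1$. The statement asserts that $\phi_{(z,C_1)}(C_1)\subset C_4$. The pairing of $k$ with $j$ should follow from the symmetry $\phi_{\bar z}(\bar w)=\overline{\phi_z(w)}$ together with the rotations $w\mapsto iw$, under which $\phi_{iz}(iw)=-i\,\phi_z(w)$. With these symmetries it suffices to treat the single case $z=1+i$, $Y=C_1$ and then transport the conclusion. I need to check that the labels permute consistently: rotation by $i$ on the domain side corresponds to rotation by $-i$ on the image side, and the map $k\mapsto j$ in the statement ($4\to1$, $3\to2$, $2\to3$, $1\to4$) should be equivariant under these symmetries.

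For the base case, the plan is to compute $\phi_{1+i}$ directly on $C_1$. Here $C_1$ is the corner region near $\tfrac12+\tfrac12 i$, cut off by the arc of the circle $|w-(1+i)|$, or by whichever unit circle about a Gaussian integer bounds $C_1$ in Figure~\ref{fig:1}. The Möbius map $w\mapsto 1/(1+i+w)$ sends circles to circles, so I would proceed as follows:

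\begin{enumerate}
\item Identify the three boundary curves of $C_1$: the two segments on $\operatorname{Re}w=\tfrac12$ and $\operatorname{Im}w=\tfrac12$, and one circular arc.
\item Compute their images under $\phi_{1+i}$. The lines $\operatorname{Re}w=\tfrac12$ and $\operatorname{Im}w=\tfrac12$ pass through neither point $-1-i$ nor the pole, so they map to circles through $0$. Their images should be arcs of the circles $|u+i|=1$ and $|u-1|=1$, or rotated versions of these; these are exactly the arcs bounding the corner regions $C_k$ in Figure~\ref{fig:1}.
\item Evaluate $\phi_{1+i}$ at the corner point $\tfrac12+\tfrac12 i$, which gives $1/(\tfrac32+\tfrac32 i)=\tfrac13(1-i)$. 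This point lies in the fourth quadrant, consistent with the target $C_4$.
\item Use conformality and the fact that the image of a curvilinear triangle is the curvilinear triangle spanned by the image arcs. This shows that $\phi_{1+i}(C_1)$ is the region bounded by the image arcs, and that region is contained in $C_4$. Equivalently, it lies inside $\overline X$ and outside the circles bounding $P_4$.
\end{enumerate}

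The main obstacle is the bookkeeping: the boundary arc of $C_1$ has to map into, or onto, the arc separating $C_4$ from $P_4$, and the orientation and labels from Figure~\ref{fig:1} have to be matched exactly. I would handle this by checking two or three explicit points. I would also expect that the arc bounding $C_1$ is precisely the preimage under $\phi_{1+i}$ of the boundary of $\overline{X}$ restricted appropriately, since that is how the prototype sets $Q_a$ are defined. If so, the inclusion $\phi_{(z,C_1)}(C_1)\subset C_4$ follows from maximality of the prototype set together with the ordering of the arcs, in the same way as in Proposition~\ref{edges3}.
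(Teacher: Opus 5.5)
You have proved a narrower statement than the one asked. Your symmetry reduction is sound: $\phi_{iz}(iw)=-i\,\phi_z(w)$ sends $C_k\mapsto C_{k+1}$ on the domain side and $C_j\mapsto C_{j-1}$ on the image side, which matches the stated pairing. But the index $k$ only identifies which $z$ of modulus $\sqrt2$ is meant. The conclusion concerns $\phi_{(z,Y)}(Y)$ for \emph{every} $Y$ allowed for $z$ in Table~\ref{tab:1}, i.e.\ $Y\in\{P_1,H_1,H_{-i},C_1\}$ for $z=1+i$. This is what the lemma defining the edge set needs, namely that $\cZ(a)$ is a singleton for $|a|=\sqrt2$. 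By reducing to $Y=C_1$ you leave $P_1,H_1,H_{-i}$ unproved.

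The paper handles all four domains at once. Their union is the prototype region $Q_{1+i}=\{w\in\overline X:\ |w+1|\ge 1,\ |w+i|\ge 1\}$, and $\phi_{1+i}(Q_{1+i})\subsetneq C_4$. Concretely:
\begin{itemize}
\item The arcs $|w+1|=1$ and $|w+i|=1$ pass through the pole $-1-i$, so they go to the edges $\operatorname{Im}v=-\tfrac12$ and $\operatorname{Re}v=\tfrac12$.
\item The edges $\operatorname{Im}w=\tfrac12$ and $\operatorname{Re}w=\tfrac12$ go to the circles $|v+\tfrac i3|=\tfrac13$ and $|v-\tfrac13|=\tfrac13$.
\item The image is the curvilinear quadrilateral with vertices $\tfrac{1-i}2$, $\tfrac1{2\sqrt3}-\tfrac i2$, $\tfrac{1-i}3$, $\tfrac12-\tfrac i{2\sqrt3}$. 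Its boundary lies in the disk $|v-(1-i)|\le1$, which cuts out $C_4$.
\end{itemize}

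Second, the geometry you predict is wrong in several places, so the steps would not go through as written.
\begin{itemize}
\item In step 2, the edges of $C_1$ map to the circles of radius $\tfrac13$ above, not to the unit circles $|u+i|=1$, $|u-1|=1$. Those unit circles pass through $0$ and bound no corner region; the $C_k$ are cut out by unit circles centered at $\pm1\pm i$.
\item In step 4, $C_4$ lies \emph{inside} $|v-1|\le1$, $|v+i|\le1$ and $|v-(1-i)|\le1$. So the relevant test is membership in the last disk, not lying outside the circles bounding $P_4$.
\item The fallback in your last paragraph rests on a false premise. The arc bounding $C_1$ is $|w-(1+i)|=1$, which misses the pole, so it is sent to the circle $|v-\tfrac{2-2i}7|=\tfrac17$, not to $\partial\overline X$.
\item In any case, maximality of the prototype set only yields $\phi_z(Q_z)\subset\overline X$. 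It never gives the finer fact that the image lands in $C_4$ and avoids $P_4$, which is the actual content of the statement.
\end{itemize}
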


\begin{proof} 
    For prototype region $Q_z$, $\phi_z(Q_z)\subsetneq C_j$ and the sets given in Table \ref{tab:1} for this $z$ are such that their union is $Q_z$.  
\end{proof}

Next we address the case of $|z|=2$ which is also quick, we omit the argument as it is similar to the $|z| = \sqrt{2}$ case.

\begin{proposition}\label{edges5}
    For $(z,Y)\in D\times\cX$ with $|z|=2$ described by Table \ref{tab:1}, then $\phi_{(z,Y)}(Y)\subset H_{\frac{z}{2}}$.  
\end{proposition}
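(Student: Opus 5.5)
The claim is that for $|z|=2$ and any admissible $Y$ from Table \ref{tab:1}, $\phi_{(z,Y)}(Y)\subset H_{z/2}$. The plan follows the $|z|=\sqrt{2}$ case. First identify the prototype region $Q_z$ and show that $\phi_z(Q_z)$ lies in the single set $H_{z/2}$. Then observe that the sets $Y$ listed in Table \ref{tab:1} for this $z$ exactly tile $Q_z$, so each $\phi_{(z,Y)}(Y)$ lies in $H_{z/2}$.

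\textbf{Reduction to $z=2$.} The other cases $z=-2,\pm 2i$ come from the symmetries $w\mapsto -w$ and $w\mapsto \pm i w$. These preserve $\overline X$, permute the sets of $\cX$ as their labels indicate, and satisfy $\phi_{uz}(uw)=u^{-1}\phi_z(w)$ for units $u$.

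\textbf{The region $Q_2$.} For $z=2$ the excluded sets are $P_2,P_3,H_{-1},C_2,C_3$, so the allowed sets are those in the closed half-square $\{\operatorname{Re} w\ge 0\}$ together with the adjacent $H_i,H_{-i}$. I would check that $Q_2=\overline X\cap\{|w+2|\ge 2\}$, i.e. the part of $\overline X$ outside the disc of radius $2$ about $-2$. This is exactly the condition $|\phi_2(w)-\tfrac12|\le\tfrac12$, i.e. $\operatorname{Re}\phi_2(w)\le \tfrac12$. The circle $|w+2|=2$ passes through $0$. Its arc inside $\overline X$ is the boundary curve between $H_{-1}$ (together with $P_2,P_3,C_2,C_3$) and the rest, matching the arcs centred at $-1\pm i$ and $-1$ drawn in Figure \ref{fig:1}. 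This should be read off from the definition of the $H$ and $P$ sets in \cite{BGH23}.

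\textbf{The image.} Since $\phi_2$ is a Möbius map, $\phi_2(Q_2)$ is bounded by the images of the four boundary arcs of $Q_2$. These are:
\begin{itemize}
\item the line $\operatorname{Re} w=\tfrac12$, which maps to the circle $|\zeta-\tfrac15|=\tfrac15$;
\item the lines $\operatorname{Im} w=\pm\tfrac12$, which map to the circles $|\zeta\mp i|=1$;
\item the arc of $|w+2|=2$, which maps into the line $\operatorname{Re}\zeta=\tfrac12$.
\end{itemize}
The image is therefore the curvilinear region between $\operatorname{Re}\zeta=\tfrac12$, the circle of radius $\tfrac15$ about $\tfrac15$, and the two unit circles centred at $\pm i$. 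I would then compare these arcs with the boundary arcs of $H_1$ in Figure \ref{fig:1}, which are the arcs of the unit circles centred at $1\pm i$ meeting the edge $\operatorname{Re}\zeta=\tfrac12$. The aim is to verify that the image lies to the right of them, hence in $H_1$. For instance, the corner images $\phi_2(\tfrac12\pm\tfrac i2)=\tfrac{5\mp i}{13}$ should be checked against those arcs.

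\textbf{Main obstacle.} The hard step is pinning down precisely the boundary of $H_1$ and confirming containment near the two corners where the image touches $\operatorname{Re}\zeta=\tfrac12$. I would first test the extreme points: the images of the corners of $Q_2$ and the intersection points of the image arcs. Containment of the whole arcs would then follow from convexity and monotonicity of circular arcs.
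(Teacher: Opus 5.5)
Your plan is the paper's own. The paper omits this proof and refers to the $|z|=\sqrt2$ case: show $\phi_z(Q_z)\subset H_{z/2}$ and note that the sets of Table~\ref{tab:1} for $z$ have union $Q_z$. Your unit-symmetry reduction via $\phi_{uz}(uw)=u^{-1}\phi_z(w)$ is also fine. However, both concrete identifications your execution rests on are wrong, and as written the argument fails.

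\textbf{The region $Q_2$.} It is not $\overline X\cap\{|w+2|\ge 2\}$. Put $u=2+w$. For $w\in\overline X$ the only binding constraint for $\phi_2(w)\in\overline X$ is $\operatorname{Re}(1/u)\le\tfrac12$, i.e.\ $2\operatorname{Re}u\le|u|^2$, i.e.\ $|w+1|\ge 1$. So $Q_2=\overline X\cap\{|w+1|\ge1\}$. Its curved boundary is the single arc of $|w+1|=1$ drawn in Figure~\ref{fig:1}, with endpoints $-1+\tfrac{\sqrt3}{2}\pm\tfrac i2$. The arcs centred at $-1\pm i$ only separate $C_2,C_3$ from the other excluded sets.

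Your chain of equivalences mixes three different conditions:
\begin{itemize}
\item $|w+2|\ge2\iff|\phi_2(w)|\le\tfrac12$;
\item $|\phi_2(w)-\tfrac12|\le\tfrac12\iff\operatorname{Re}w\ge-1$;
\item $\operatorname{Re}\phi_2(w)\le\tfrac12\iff|w+1|\ge1$.
\end{itemize}
The disc $|w+2|<2$ strictly contains $|w+1|<1$, so your region is too small and the tiling step breaks. For example, $w=-\tfrac1{10}+\tfrac i2$ lies in $H_{-i}$, which Table~\ref{tab:1} allows for $z=2$, yet $|w+2|<2$. Likewise, $|w+2|=2$ is mapped to $|\zeta|=\tfrac12$, not to $\operatorname{Re}\zeta=\tfrac12$; only $|w+1|=1$ goes to that line.

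\textbf{The set $H_1$.} It is not bounded by arcs of the unit circles centred at $1\pm i$. Those arcs cut off the corners $C_1$ and $C_4$ and meet $H_1$ only at its two vertices on the edge $\operatorname{Re}\zeta=\tfrac12$. Comparing the image with them therefore cannot certify membership in $H_1$. From Figure~\ref{fig:1},
\[
H_1=\overline X\cap\{|\zeta-1|\le1,\ |\zeta-i|\ge1,\ |\zeta+i|\ge1\}.
\]
This is the curvilinear triangle with vertices $0$ and $\tfrac12\pm i(1-\tfrac{\sqrt3}{2})$, bounded by the edge $\operatorname{Re}\zeta=\tfrac12$ and arcs of the unit circles centred at $\pm i$.

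\textbf{The corrected argument.} With both corrections your ``main obstacle'' disappears. Under $\phi_2$:
\begin{itemize}
\item the arc of $|w+1|=1$ maps onto the segment of $\operatorname{Re}\zeta=\tfrac12$ joining the two vertices of $H_1$; the corners $-1+\tfrac{\sqrt3}2\pm\tfrac i2$ go to $\tfrac12\mp i(1-\tfrac{\sqrt3}2)$;
\item the edges $\operatorname{Im}w=\pm\tfrac12$ map onto arcs of $|\zeta\pm i|=1$ respectively (note the sign: $\operatorname{Im}w=\tfrac12$ goes to $|\zeta+i|=1$, not $|\zeta-i|=1$);
\item the edge $\operatorname{Re}w=\tfrac12$ maps onto an arc of $|\zeta-\tfrac15|=\tfrac15$.
\end{itemize}
Hence $\phi_2(Q_2)=H_1\setminus\{|\zeta-\tfrac15|<\tfrac15\}\subset H_1$ exactly, with no corner estimates needed. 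The allowed sets $P_1,P_4,H_1,H_{\pm i},C_1,C_4$ have union $Q_2$, since they avoid the open disc $|w+1|<1$ while the excluded sets lie in the closed disc $|w+1|\le1$.
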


The last and most complicated case is that of $|z|=\sqrt{5}$ which can be checked by computation noting the images of the $H$ sets and that $\phi_z$ is conformal.  We summarize this case in Table \ref{tab:2}.

\begin{proposition}\label{edges6}
    For a given $z$ and $Y$, the Table \ref{tab:2} describes  the associated co-domain $X_{i(z,y)}$ of $\phi_{z,Y}$. 

    \begin{center}
\begin{table}[!ht]
\caption{Maps $\phi_{(z,Y)}:Y\to X_{i(z,Y)}$ for $|z|=\sqrt{5}$}
\label{tab:2}
\begin{tabular}{|c|c|c|}
\hline
$z$ & $Y$ & $X_{i(z,y)}$ \\
\hline
$1+2i$ & $P_1,P_2,H_1,H_{-i},H_{-1},C_1,C_2$ & $P_4$ \\
\hline
$2+i$ & $P_1,P_4,H_1,H_{-i},H_{i},C_1,C_4$ & $P_4$ \\
\hline
\hline
$1+2i$ & $P_3,P_4,H_{i},C_4$ & $C_4$ \\
\hline
$2+i$ & $P_2,P_3,H_{-1},C_2$ & $C_4$ \\
\hline
\hline
$1-2i$ & $P_3,P_4,H_1,H_{-1},H_{i},C_3,C_4$ & $P_1$ \\
\hline
$2-i$ & $P_1,P_4,H_1,H_{-i},H_{i},C_1,C_4$ & $P_1$ \\
\hline
\hline
$1-2i$ & $P_1,P_2,H_{-i},C_1$ & $C_1$ \\
\hline
$2-i$ & $P_2,P_3,H_{-1},C_3$ & $C_1$ \\
\hline
\hline
$-1+2i$ & $P_1,P_2,H_1,H_{-i},H_{-1},C_1,C_2$ & $P_3$ \\
\hline
$-2+i$ & $P_2,P_3,H_{-i},H_{-1},H_{i},C_2,C_3$ & $P_3$ \\
\hline
\hline
$-1+2i$ & $P_3,P_4,H_{i},C_3$ & $C_3$ \\
\hline
$-2+i$ & $P_1,P_4,H_{1},C_1$ & $C_3$ \\
\hline
\hline
$-1-2i$ & $P_3,P_4,H_1,H_{-1},H_{i},C_3,C_4$ & $P_2$ \\
\hline
$-2-i$ & $P_2,P_3,H_{-i},H_{-1},H_{i},C_2,C_3$ & $P_2$ \\
\hline
\hline
$-1-2i$ & $P_1,P_2,H_{-i},C_2$ & $C_2$ \\
\hline
$-2-i$ & $P_1,P_4,H_{1},C_4$ & $C_2$ \\
\hline
\end{tabular} 
\end{table}
\end{center}
\end{proposition}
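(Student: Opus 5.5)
This is a finite verification. For each $z$ with $|z|=\sqrt5$ and each admissible $Y$ (as listed in Table \ref{tab:1}), we must show that $\phi_z(Y)=\{1/(z+w):w\in Y\}$ lands inside the claimed target set. The symmetry group of the square reduces the work. The maps $w\mapsto \bar w$ and $w\mapsto iw$ preserve $\overline X$ and permute the pieces of $\cX$. They also conjugate the inversions, since
$$
\phi_z(\bar w)=\overline{\phi_{\bar z}(w)}, \qquad i\,\phi_z(iw)=\phi_{-iz}(w).
$$
So the eight digits $\pm1\pm2i$, $\pm2\pm i$ form a single orbit under this dihedral group of order $8$. It therefore suffices to treat one digit, say $z=1+2i$, and transport the result by the induced permutation of the labels $C_k,H_{\pm1},H_{\pm i},P_k$. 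Along the way I will check that the rows of Table \ref{tab:2} are consistent with this permutation.

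For $z=1+2i$, I would compute where $\phi_z$ sends the boundary curves of the pieces. The map $\phi_z$ is a M\"obius transformation, so it sends lines and circles to lines and circles. Concretely:
\begin{itemize}
\item The four sides of $\overline X$ (lines $\operatorname{Re}w=\pm\frac12$, $\operatorname{Im}w=\pm\frac12$) are sent under $w\mapsto 1/(z+w)$ to circles through $0$. Each such image circle can be written down explicitly.
\item The arcs bounding the $C_k$ and $H$ pieces in Figure \ref{fig:1} are arcs of the unit circles centred at $\pm1$, $\pm i$, $\pm1\pm i$. The unit circles about $\pm1,\pm i$ are exactly the images of the lines $\operatorname{Re},\operatorname{Im}=\pm\frac12$ under $w\mapsto 1/w$.
\end{itemize}
Consequently, the image of each piece $Y$ under $\phi_z$ is a curvilinear polygon with circular sides and explicitly computable vertices. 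Its position relative to the arcs of Figure \ref{fig:1} is then decided by evaluating $|\phi_z(v)-c|$ at the vertices $v$ of $Y$, together with a tangency/convexity check along each side. Here $c$ runs over the relevant centres $\pm1$, $\pm i$, $\pm1\pm i$.

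The structural input is that $\phi_z(\overline X)$ meets only $P_4\cup C_4$ (the relevant quadrant), which is established in Proposition \ref{edges3}. In that quadrant, the single arc of the circle $|u-(1+i)|=1$ separates $C_4$ from $P_4$, up to the orientation convention of the figure. The key step is therefore to determine which pieces $Y$ are mapped across that arc and which are not.

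I would pull the arc back: $\phi_z^{-1}$ of that circle is a circle $\Sigma$ in the $w$-plane. I then check that $\Sigma$ coincides with one of the arcs bounding the pieces of $\cX$. This matches the prototype-set description of \cite{BGH23}, where the prototype boundaries are exactly such pullbacks. Once $\Sigma$ is identified as a piece boundary, each $Y$ lies entirely on one side of $\Sigma$, and the side can be read off from a single interior test point.
\begin{itemize}
\item For $1+2i$, the pieces on one side should be $P_1,P_2,H_1,H_{-i},H_{-1},C_1,C_2$, mapping to $P_4$.
\item The pieces on the other side should be $P_3,P_4,H_i,C_4$, mapping to $C_4$.
\item $C_3$ is excluded, as recorded in Table \ref{tab:1}.
\end{itemize}
I then repeat this for $2+i$, which is the image of $1+2i$ under the reflection $w\mapsto i\bar w$, so it follows by symmetry.

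The main obstacle is the bookkeeping on boundaries. Pieces touch $\Sigma$ along arcs or at corners, so one must verify that the containment $\phi_z(Y)\subset W$ holds for the closed sets, not merely that interiors land correctly. This needs the regular-closedness of $W$ and a careful check that the images of corner points, for example the square corner sent into $C_j$ as in the proof of Proposition \ref{edges3}, fall on the correct side. The second risk is a mislabelled row under the symmetry transport, which I would guard against by spot-checking one test point per row numerically.
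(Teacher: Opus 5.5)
Your proposal is correct, and it is more explicit than the paper's argument. The paper's entire justification is the remark that Table~\ref{tab:2} ``can be checked by computation noting the images of the $H$ sets and that $\phi_z$ is conformal.'' In other words, the paper pushes pieces forward and reads the answer off Figure~\ref{fig:Cylinders}. You instead reduce the eight digits to one by the dihedral symmetry and pull back the single circle separating $C_j$ from $P_j$.

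The fact that makes your approach work should be stated and proved rather than ``checked.'' For $|c|^2=2$, inversion sends $\{|u-c|=1\}$ to $\{|v-\bar c|=1\}$. Hence the pullback under $\phi_z$ is $\{|w-(\bar c-z)|=1\}$ with $|\bar c-z|=1$, which is one of the four unit circles through $0$ that bound the pieces. This gives a genuine proof and explains the shape of the table. The $C_j$-rows are exactly the pieces inside one disk $\{|w\mp1|\le1\}$ or $\{|w\mp i|\le1\}$, minus the excluded corner; the $P_j$-rows are the remaining pieces.

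Three corrections are needed.

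First, in the paper's labelling $C_4$ is the lower-right corner, cut off by $|u-(1-i)|=1$, not $|u-(1+i)|=1$. The latter circle pulls back under $\phi_{1+2i}$ to $|w+3i|=1$, which misses $\overline X$ entirely. With the correct circle, the pullback is $\Sigma=\{|w+i|=1\}$ for $z=1+2i$ and $\{|w+1|=1\}$ for $z=2+i$. The pole $-z$ lies at distance $\sqrt2$ from the centre of $\Sigma$, so $\phi_z$ maps the closed disk bounded by $\Sigma$ onto the corner disk, and no test point is needed.

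Second, Proposition~\ref{edges3} concerns $|z|=\sqrt8$ and does not supply your ``structural input.'' You should prove it directly. For $w\in\overline X$ and $z=1+2i$ we have $\operatorname{Re}(z+w)\ge\frac12$ and $\operatorname{Im}(z+w)\ge\frac32$. Moreover, $|u-1|\le1\iff\operatorname{Re}(1/u)\ge\frac12$ and $|u+i|<1\iff\operatorname{Im}(1/u)>\frac12$. Therefore $\phi_z(Q_z)\subseteq\overline X\cap\{|u-1|\le1\}\cap\{|u+i|\le1\}=P_4\cup C_4$.

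Third, in the symmetry transport, sources and targets move by different maps, so there is no single ``induced permutation.'' The identity $i\phi_z(iw)=\phi_{-iz}(w)$ turns a row $(z,Y)\to W$ into $(-iz,-iY)\to iW$. Similarly, $\phi_{2+i}(w)=-i\,\overline{\phi_{1+2i}(i\bar w)}$: sources move by $w\mapsto i\bar w$, but targets move by $u\mapsto -i\bar u$, which fixes $P_4$ and $C_4$. If you applied $w\mapsto i\bar w$ to both columns, you would wrongly send $2+i$ into $P_2\cup C_2$.
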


\newpage
\bibliographystyle{plain}
\bibliography{VDM.bib}
 
\end{document}